\def\eu{\ensuremath{\mathrm{e}}}
\def\iu{\ensuremath{\mathrm{i}}}
\newcounter{def}
\newtheorem{defini}[def]{Definition}
\begin{document}
	
	\title{A novel spectral method for the semi-classical Schr\"odinger equation based on the Gaussian wave-packet transform}
	\shorttitle{A novel spectral method based on GWPT}
	
	\author{%
		{\sc
			Borui Miao\thanks{Email: mbr20@mails.tsinghua.edu.cn}
		} \\[2pt]
		Department of Mathematical Sciences, Tsinghua University, Beijing, 100084, China\\[6pt]
		{\sc and}\\[6pt]
		{\sc Giovanni Russo\thanks{Email: russo@dmi.unict.it}}\\[2pt]
		Department of Mathematics and Computer Science, \\University of Catania, Catania, 95125, Italy\\[6pt]
		{\sc and}\\[6pt]
		{\sc Zhennan Zhou\thanks{Corresponding author. Email: zhennan@bicmr.pku.edu.cn}}\\[2pt]
		Beijing International Center for Mathematical Research, \\Peking University, Beijing, 100871, China}
	\shortauthorlist{}
	
	\maketitle
	
	\begin{abstract}
		{In this article, we develop and analyse a new spectral method to solve the semi-classical Schr\"odinger equation based on the Gaussian wave-packet transform (GWPT) and  Hagedorn's semi-classical wave-packets (HWP). The GWPT equivalently recasts the highly oscillatory wave equation as a much less oscillatory one (the $w$ equation) coupled with a set of ordinary differential equations governing the dynamics of the so-called GWPT parameters. The Hamiltonian of the $ w $ equation consists of a quadratic part and a small non-quadratic perturbation, which is of order $ \mathcal{O}(\sqrt{\varepsilon
			}) $, where $ \varepsilon\ll 1 $ is the rescaled Planck's constant. By expanding the solution of the $ w $ equation as a superposition of Hagedorn's wave-packets, we construct a spectral method while the $ \mathcal{O}(\sqrt{\varepsilon}) $ perturbation part is treated by the Galerkin approximation. This numerical implementation of the GWPT avoids imposing artificial boundary conditions and facilitates rigorous numerical analysis. %
			For arbitrary dimensional cases, we establish how the error of solving the semi-classical Schr\"odinger equation with the GWPT is determined by the errors of solving the $ w $ equation and the GWPT parameters. We prove that this scheme has the spectral convergence with respect to the number of Hagedorn's wave-packets in one dimension. Extensive numerical tests are provided to demonstrate the properties of the proposed method. 
		}
		{semi-classical Schr\"odiner equation; Gaussian wave-packet transform; Hagedrn's wave-packets; spectral method.}
	\end{abstract}

	\section{Introduction}\label{sec:intro}
	We consider the time-dependent semi-classical Schr\"odinger equation that governs the dynamics of a quantum particle in an external potential:
	\begin{align}\label{eqn:SEqn}
		\iu\varepsilon\partial_t\psi(x,t) =\hat{H}(x,-\iu\varepsilon \nabla_x)\psi(x,t)&=-\frac{\varepsilon^2}{2}\Delta_{x}\psi(x,t)+V(x)\psi(x,t),\quad x\in \mathbb{R}^{d},t\in\mathbb{R}^{+}; \\
		\psi(x,0)&=\psi_{0}(x),\quad x\in \mathbb{R}^{d}.\label{eqn:Sini_val}
	\end{align}
	Here, $ \varepsilon\ll 1 $ denotes the rescaled Planck's constant, which is also referred to as the semi-classical parameter and $ \hat{H} $ denotes the quantum Hamiltonian operator obtained from the classical Hamiltonian symbol $ H(x,p) $ by the Weyl quantization. 
	$ V(x) $ in \eqref{eqn:SEqn} is a real-valued scalar potential function and $ \psi(x,t) $ is a complex-valued wave function. 
	This is one of the fundamental equations in quantum mechanics, and bridges the gap between realms of quantum mechanics and classical mechanics as $\varepsilon \rightarrow 0$. It also gives rise to many interesting problems in analysis and numerical computation, see e.g. \citet{Heller2018,Jin2011,Lasser2020,Zworski2012}. \par
	Due to the smallness of the semi-classical parameter $ \varepsilon $, the solutions $ \psi(x,t) $ is highly oscillatory in both space and time on a scale $ \mathcal{O}(\varepsilon) $ \citep[see][]{Jin2011}. 
	Also, many physical applications demand accurate simulations of the semi-classical Schr\"odinger equation \eqref{eqn:SEqn} in high dimensions. 
	Both facts lead to significant computational burdens if conventional algorithms 
	are directly applied to equation \eqref{eqn:SEqn}-\eqref{eqn:Sini_val}. 
	Comparatively speaking, the Fourier spectral method studied in \cite{Bao2002}. In their paper, it is most efficient among grid-based numerical methods, where the meshing strategy $ \Delta t = \mathcal{O}(\varepsilon) $ and $ \Delta x = \mathcal{O}(\varepsilon) $ is needed to approximate the wave function. If one only aims to capture the correct physical observables, the time step size can be relaxed to $ \mathcal{O}(1) $, while the spatial mesh constraint $ \Delta x = \mathcal{O}(\varepsilon) $ is still necessary (see \cite{Golse2020} for a rigorous justification). In recent years, this method has been extended to the semi-classical Schr\"odinger equations with vector potentials and time-dependent self-consistent field systems, see \cite{Jin2013, Ma2017, Jin2017}. This method can also be generalized to produce high order splitting methods, see \cite{Bader2014} for details. \par
	However, to implement the grid-based methods in high dimensions, one has to at least put a few collocation points for each wavelength on a scale $\mathcal{O}(\varepsilon)$ in every space dimension. This will cause unaffordable costs in numerical implementation. To overcome the curse of dimensionality, one can apply the splitting methods on a sparse grid \citep[see][]{Gradinaru2008} when $ \varepsilon\sim \mathcal{O}(1) $. Unfortunately, the authors pointed out in \cite{Lasser2020} that more than $ \mathcal{O}(\varepsilon^{-d}) $ points are needed in the semi-classical regime, making the Fourier spectral method on a sparse grid infeasible.\par  
	For simulating the semi-classical Sch\"odinger equation in high dimensions, many asymptotic approaches have been developed to avoid resolving the highly oscillatory wave function. For example, the Gaussian beam method  (see e.g. \cite{Liu2012,Jin2008}) reduces the quantum dynamics to Gaussian beam dynamics that is determined by a set of ordinary differential equations with $ \mathcal{O}(md^2) $ variables. Here $ m $ is the number of Gaussian beams which is determined by the initial condition. This method also brings an approximation error of order $ \mathcal{O}(\varepsilon^{1/2}) $, which is not satisfactory unless $ \varepsilon $ is extremely small. High order Gaussian beams (see \cite{Liu2012,Jin2011a}) can be used to reduce the error of order $ C_k(T)\varepsilon^{k/2} $, but this may not lead to higher accuracy for moderate $\varepsilon$ either. In particular, for a given $\varepsilon$, it is not guaranteed that increasing $k$ provides a decrease in the error. 
	Continuous superposition of Gaussian beams is investigated in \cite{Zheng2014}, and it is proved that this method have error of order $\mathcal{O}(\varepsilon)$ when the initial is of WKB type. But it is not clear how to develop high order algorithms under this framework.
	For other numerical methods to the semi-classical Schr\"odinger equation, the readers may refer to the recent reviews \cite{Jin2011,Lasser2020}.   
	\par  
	In \cite{Hagedorn1998}, Hagedorn constructed a set of semi-classical wave-packets that constitute a basis of $ \mathbb{L}^2(\mathbb{R}^d) $. When the Hamiltonian operator is quadratic (i.e.\ the Hamiltonian symbol $ H(x,p) $ is a second order polynomial in $ x,\,p $), the full quantum dynamics is equivalent to the dynamics of the wave packet parameters. The author also built up explicit dynamics for coefficients of the wave-packets that can be applied to approximate the solution with a quantifiable error estimate for general quantum Hamiltonian operators. However, the constructed dynamics is too complicated to be implemented as a numerical method. Here we comment that the dynamics of Hagedorn's semi-classical wave-packets can be regarded as an alternative formulation of the wave-packet dynamics.\par  
	Based on the semi-classical wave-packets, Faou et al.\ \citep[see][]{Faou2009} first turned them into a computational tool (later abbreviated as the HWP method). They use the Galerkin approximation to deal with the general potential, making it feasible for numerical implementation. They also observed that sparse grid techniques can be applied when choosing the superposition of wave-packets in high dimensions. Compared with Gaussian beam method, the HWP method improved the approximation error to $ C(T)\varepsilon^{k/2} $ \citep[see, e.g.,][]{Lasser2020}. This method can be generalized to produce higher order splitting methods, \citep[see ][]{Blanes2020,Gradinaru2013}. And it is rigorously proved the convergence in time in \cite{Gradinaru2013}. For the validity of the Galerkin approximation and the generalization of HWP to the semi-classical Schr\"odinger equations with vector potentials, see \cite{Zhou2014} for details.\par 
	Russo and Smereka proposed a new computational framework called the Gaussian wave-packet transform in \cite{Russo2013,Russo2014} (abbreviated as GWPT in later text) to solve the semi-classical Schr\"odinger equation in arbitrary spatial dimensions. It is a reversible transformation and reduces the semi-classical Schr\"odinger equation to the wave-packet dynamics together with the time evolution of a rescaled wave-function $ w $. The GWPT parameters that describe the wave-packet dynamics satisfy a set of ordinary differential equations. The rescaled wave function $ w $ satisfies a Schr\"odinger-type equation with a time-dependent Hamiltonian operator, and is less oscillatory than the original solution to the Schr\"odinger equation \eqref{eqn:SEqn}. Another fact about the $ w $ equation is that its quantum Hamiltonian consists of a quadratic part and a $ \mathcal{O}(\sqrt{\varepsilon}) $ perturbation part. This fact motivated us to treat the non-quadratic part as a perturbation. For extension of the GWPT to semi-classical Schr\"odinger equation with vector potential, see \cite{Zhou2019}.
	In all these works \citep[][]{Russo2013, Russo2014, Zhou2019} the GWPT formulation has been implemented as follows. For the $w$ equation, a second or fourth order splitting method is used in time and a Fourier-spectral discretization in a truncated spatial domain has been adopted, with periodic boundary conditions, which may be a reasonable approximation if the wave function decays fast enough. The GWPT parameters are numerically solved by a fourth order Runge-Kutta scheme. Thus, the whole scheme is spectrally accurate in space and second or fourth order accurate in time, given that the domain truncation is proper.
	\par  
	Now we are concerned with the following aspects of the GWPT. First, although this transformation can effectively reduce the computational burden, its related numerical analysis remains undone. Hereby, we ask how the error in solving the Gaussian wave-packet dynamics and the $ w $ equation affects the error of the reconstructed solution $ \psi $. Second, in \cite{Russo2013,Russo2014} the authors proposed to solve the $w$ equation with a Fourier spectral method. As we mentioned before, an artificial periodic boundary condition may cause difficulties for the numerical analysis of the method. 
	\par
	In this paper, we propose a spectral method based on the GWPT and the HWP method. Recall that the quantum Hamiltonian operator of the $ w $ equation is divided into a quadratic part and a $ \mathcal{O}(\sqrt{\varepsilon}) $ non-quadratic perturbation. We can expand the solution to the $w$ equation as superpositions of Hagedorn wave-packets, and thus the time evolution of the quadratic Hamiltonian is recast in terms of wave packet-dynamics. Then, we use the Galerkin approximation to deal with the $ \mathcal{O}(\sqrt{\varepsilon}) $ non-quadratic part. This numerical implementation of the GWPT avoids imposing artificial boundary conditions since it solves the $w$ equation on the whole space. \par 
	For the numerical analysis, we first develop a framework to estimate the error bound in the wave function $\psi$ for generic numerical methods which are based on the GWPT. Here, we only need to know the $ \mathbb{L}^{\infty} $ error when solving the GWPT parameters and discrete $ \mathbb{L}^2 $ error when solving the $ w $ equation to give such an error estimate. Second, we prove an asymptotic error bound with respect to the number of wave-packets rigorously in one dimension, showing the spectral accuracy. Compared with the previous results, which are at most algebraic accurate with respect to the number of wave-packets, our proposed scheme is certainly superior and appears promising in high dimensional simulations, since it greatly saves the computational cost.  
	Extensive numerical tests are provided to demonstrate the properties of the method and verify the above arguments.  \par 
	
	This article is organized as follows: In \Cref{sec:Review} we will briefly review the GWPT  and the HWP for the Schr\"odinger equation and propose a novel spectral method in solving the semi-classical Schrödinger equation  based on the GWPT. In \Cref{sec:Numerical_Analysis} we carry out the related numerical analysis. We provide systematic numerical tests in \Cref{sec:Numerical_Test}, demonstrating the spectral convergence and other properties of the proposed scheme. We give some concluding remarks in \Cref{sec:Remarks}.
	The appendix is devoted to explain some notation and technical details. 
	
	\section{Description of the Numerical Method}\label{sec:Review}
	In this section, we will present a novel spectral method for the semi-classical Schr\"odinger equation based on the GWPT and the HWP. In particular, the efficiency of the method is not affected by the smallness of the parameter $\varepsilon$, and the approximation error can be rigorously quantified. To describe the method in detail, we will first review some properties of the GWPT and HWP.
	\subsection{A Brief Review of the GWPT}\label{subsec:review_GWPT}
	For heuristic purposes, let's first consider the semi-classical Schr\"odinger equation \eqref{eqn:SEqn} with a normalized Gaussian initial value
	\begin{equation}\label{eqn:ini_GWPT}
		\psi(x,0) =  2^{d/4}(\pi\varepsilon)^{-d/4}\exp\Big\{ (\iu/\varepsilon)\Big[ \big(x-q_0\big)^{T}\alpha_0\big(x-q_0\big)+p_0\big(x-q_0\big)+\gamma_0	 \Big] \Big\}.
	\end{equation}
	where $ q_0,p_0 $ are vectors in $ \mathbb{R}^{d\times 1} $, $\gamma_0 $ is a complex scalar and $\alpha_0$ is a complex-valued $ d\times d $ matrix whose imaginary part is positive-definite. Here $ \gamma_0 $ is chosen to satisfy $ \Vert \psi(\cdot,0) \Vert_{\mathbb{L}^2_x} = 1 $. Such normalization implies that 
	\begin{equation}\label{eqn:ini_val}
		\exp[ -\gamma_{0,I}/\varepsilon] = \{\det[\alpha_{0,I}]\}^{1/4}.
	\end{equation}
	where $ \alpha_{I} $ is the imaginary part of the complex matrix $ \alpha $. And it is clear from the above arguments that $ \alpha_{0,I} $ does not depend on $ \varepsilon $, while $ \gamma_{0,I} $ depends on $ \varepsilon $.
	This Cauchy problem has three typical scales: the $ \mathcal{O}(\varepsilon) $ oscillations in the phase, the $ \mathcal{O}(\sqrt{\varepsilon}) $ beam width, and its dynamics over a $ \mathcal{O}(1) $ space-and-time domain. The external scalar potential varies in a $ \mathcal{O}(1) $ scale as well. Many asymptotic approaches have been extensively studied, while the introduction of the asymptotic approximation error was inevitable. In \cite{Russo2013}, the authors proposed the Gaussian wave-packet transform (GWPT) to separate the three scales in the time evolution of highly oscillatory wave function, and the resulting system is equivalent to the original semi-classical Schr\"odinger equation.\par 
	We now investigate this method in detail. Suppose the solution is in the form of  
	\begin{eqnarray}\label{eqn:antzas}
		\psi(x,t)=W(\xi ,t)\exp \left[ (\iu/\varepsilon)\left( \xi^{T}\alpha_{R}\xi+p^{T}\xi+\gamma \right) \right],
	\end{eqnarray}
	where $ \xi=x-q\in \mathbb{R}^{d\times 1} \mbox{ and } \alpha_{R} \in \mathbb{R}^{d\times d} $ is the real part of a complex $d\times d$ matrix $ \alpha $. Furthermore suppose $ q,p,\gamma,\alpha $ are functions of time whose dynamics are governed by the following ordinary differential equations
	\begin{gather}\label{eqn:ODE}
		\begin{aligned}
			\dot{q}&=p,\quad\dot{p}=-(\nabla V(q))^{T},\quad  \\
			\dot{\gamma}&=\frac{1}{2}p^{T}p-V(q)+\iu\varepsilon\operatorname{tr}(\alpha_{R}),\\
			\dot{\alpha}&= \frac{d(\alpha_{R}+\iu\alpha_{I})}{dt} = -2\alpha\alpha -\frac{1}{2}\nabla\nabla V(q).\\
		\end{aligned}
	\end{gather}
	The initial values for $ q,p,\gamma,\alpha $ can be determined from the initial value \eqref{eqn:ini_GWPT}, and are, respectively, $ q_0,p_0,\gamma_0,\alpha_0 $. Here, $ q $ and $ p $ are interpreted as the position and momentum center of the Gaussian wave packet and they follow the Hamiltonian's equations of classical dynamics. Then the equation for $ W(\xi,t) $ becomes
	\begin{align}
		\partial_tW&=\frac{\iu\varepsilon}{2}\Delta_{\xi}W-2\xi^{T}\alpha_{R} (\nabla_{\xi}W)^{T}-\frac{\iu}{\varepsilon}F(\xi)W,
	\end{align}
	where 
	\begin{align*}
		F &=V_{2}(\xi;q)+2\xi^{T}\alpha_{I}\alpha_{I}\xi,\\
		V_2(\xi;q) &= V(q+\xi)-V(q)-\nabla V(q)\xi - \frac{1}{2}\xi^{T}\nabla\nabla V(q)\xi.
	\end{align*}
	Notice that $V_2$ is the difference between the potential and its second order Taylor expansion around $q$.
	The {\em ansatz\/} \eqref{eqn:antzas} and the ODE system \eqref{eqn:ODE} separate the $ \mathcal{O}(\varepsilon) $ oscillations and ensure that the $ W $ function is concentrated around $\xi=0$, i.e. where the position variable $x$ is close to the wave packet center $q$. Finally, we take the change of coordinates  
	\begin{equation}\label{eqn:change_coor}
		\eta=B\xi/\sqrt{\varepsilon},
	\end{equation} 
	where $ B $ is a function of time that satisfies the following ODE
	\begin{align}
		\dot{B} &= -2B\alpha_{R}.\\
		B_0 &= \sqrt{\alpha_{0,I}}.
	\end{align} 
	It can be shown that $ \alpha_{I} = B^{T}B $ for all $ t>0 $, thus the matrix $ B $ is invertible at any time. By the change of coordinates \eqref{eqn:change_coor}, the wave-packets has width $ \mathcal{O}(1) $ on $ \eta $ space, thus resolving the $ \mathcal{O}(\sqrt{\varepsilon}) $ scale. Now we obtain the equation for $ w(\eta,t)=W(\xi(\eta,t),t) $ and its initial value
	\begin{align}\label{eqn:wEqn}
		\iu\partial_t w&=-\frac{1}{2}\operatorname{tr}(B^{T}\nabla_{\eta}\nabla_{\eta}wB)+2\eta^{T}BB^{T}\eta w+\sqrt{\varepsilon} U(\eta;q)w\triangleq \hat{H}(t)w,\\
		w(\eta,0) &= \frac{2^{d/4}}{(\pi\varepsilon)^{d/4}} \exp ( -\eta^{T}\eta )\label{eqn:w_ini},
	\end{align}
	where $ U(\eta;q) = \varepsilon^{-3/2} V_2(\sqrt{\varepsilon}B^{-1}\eta;q) \sim \mathcal{O}(1)$ is given by
	\begin{equation}\label{eqn:potential}
		U(\eta;q) = \frac{1}{\varepsilon^{3/2}}[V(q + \sqrt{\varepsilon}B^{-1}\eta) - V(q) - \sqrt{\varepsilon} \nabla V(q)(B^{-1}\eta) - \frac{\varepsilon}{2}(B^{-1}\eta)^{T}\nabla\nabla V(q)B^{-1}\eta].
	\end{equation} 
	This Schr\"odinger-typed equation \eqref{eqn:wEqn} and the above ordinary differential equations \eqref{eqn:ODE} are not oscillatory, making them easier to solve than the original Cauchy problem \eqref{eqn:SEqn}-\eqref{eqn:Sini_val}. Another thing to notice here is that the $ \mathbb{L}^2_{\eta} $ norm of the initial value \eqref{eqn:w_ini} is $  \varepsilon^{-d/4} $. Besides, this transform can be extended to deal with more general initial conditions, see \cite{Russo2013,Russo2014} for more details. \par 
	To propose a numerical method based on the GWPT, we have to solve a set of ordinary differential equations \eqref{eqn:ODE} and the $ w $ equation \eqref{eqn:wEqn} efficiently. In \cite{Russo2013}, the authors proposed a numerical method based on the GWPT. They implement a Fourier spectral method to solve the $ w $ equation \eqref{eqn:wEqn} on some artificial computational domain $ [-\sigma/2,\sigma/2] $, which will bring in numerical errors that are difficult to quantify.\par  
	In the rest of the article, the parameters $ q,p,\gamma,\alpha_{R},\alpha_{I}, B $ will be called \emph{GWPT parameters}. And we will define the quadratic part $ \hat{H}_{q}(t) $ of the Hamiltonian operator $ \hat{H}(t) $ as
	\begin{equation}\label{eqn:Hamil_qua}
		\hat{H}_{q}(t)w \triangleq -\frac{1}{2}\operatorname{tr}(B^{T}\nabla_{\eta}\nabla_{\eta}wB)+2\eta^{T}BB^{T}\eta w.
	\end{equation}
	It is easy to see that both operators $ \hat{H}_{q}(t),\hat{H}(t) $ are self-adjoint.
	
	\subsection{Numerical Method Description}\label{subsec:formulation}
	
	In devising a numerical method based on the GWPT, the main issue concerns the construction of an efficient algorithm for the $ w $ equation \eqref{eqn:wEqn}. Notice first that the residue term $ U/\varepsilon $ given by 
	\eqref{eqn:potential} is order $ \mathcal{O}(\varepsilon^{1/2}) $, which can be treated as a perturbation. Therefore, one may expect to use superpositions of eigenfunctions of $ \hat{H}_{q}(t) $ to represent the solution, and the perturbation term determines the evolution of the expansion coefficients, which can be dealt with in a variational way. In \cite{Hagedorn1980}, Hagedorn proposed a constructive method to identify the eigenfunctions of time-dependent quadratic quantum Hamiltonians, which can be viewed as a generalization of the quantum harmonic oscillator theory. The eigenfunctions can referred to as the semi-classical wave-packets, or Hagedorn's wave packets. We will first review some basic properties of the semi-classical wave-packets. 
	\subsubsection{Review of Hagedorn's Wave-packets (HWP)}
	To construct such semi-classical wave-packets recursively with fixed real vectors $ q_h, p_h\in \mathbb{R}^{d\times 1}$ and invertible matrices $ Q_h,P_h\in \mathbb{C}^{d\times d} $ that satisfy
	\begin{align} \label{eqn:symp_relation1}
		Q_h^{T}P_h - P^{T}_hQ_h &= 0,\\
		Q_h^{\ast}P_h - P^{\ast}_hQ_h &= 2\iu E_{d} \triangleq 2\iu \times \operatorname{diag}(1,1,1\cdots,1) .\label{eqn:symp_relation2} 
	\end{align}
	Here $ Q_{h}^{T},P_{h}^{T} $ denote the transpose of $ Q_h,P_h $ and $ Q_{h}^{\ast},P_{h}^{\ast} $ denote the conjugate transpose of $ Q_h,P_h $. Vector $ q_h,p_h $ can be interpreted as the center of the wave-packet on position and momentum space, while the matrices $ Q_h,P_h $ are related to the width of the wave-packets. We add subscript \lq\lq$ h $\rq\rq~to denote the HWP parameters, which are to be distinguished from the GWPT parameters. Based on them, Hagedorn constructed a set of orthonormal basis functions, also known as the semi-classical wave packets $ \{ \varphi_{k}^{\delta} \}_{k\in\mathbb{N}^d} $ in $ \mathbb{L}_{x}^{2}(\mathbb{R}^d) $ \citep[see][]{Hagedorn1980,Hagedorn1998} based on a generic quadratic Hamiltonian operator $ \hat{H} $. Here $\delta$ denotes the semi-classical scale of the HWP, while $\sqrt{\delta}$ is related to the typical width of the HWP. 
	In particular, in 1D, those basis functions coincide with the eigenfunction of the Hamiltonian operator. It is worth noting that $ H_q $ as in \eqref{eqn:Hamil_qua} is indeed one example of such Hamiltonians. The construction of the basis functions is carried out by the raising and lowering operators, which are given by $ \mathcal{A}^{\dagger}= ( \mathcal{A}_{j}^{\dagger} )_{j=1}^{d} $ and $ \mathcal{A} = (\mathcal{A}_{j} )_{j=1}^{d} $
	\begin{align}\label{eqn:lowering_op}
		\mathcal{A}[q_h,p_h,Q_h,P_h] &\triangleq \frac{1}{\sqrt{2\delta}}\left[ \iu Q^{T}_h(\hat{p}-p_h)-\iu P_h^{T}(x-q_h) \right],\\
		\mathcal{A}^{\dagger}[q_h,p_h,Q_h,P_h]&\triangleq \frac{1}{\sqrt{2\delta}}\left[ -\iu Q^{\ast}_h(\hat{p}-p_h)+\iu P_h^{\ast}(x-q_h) \right],
		\label{eqn:raising}
	\end{align} 
	where $ \hat{p}\triangleq -\iu\delta \nabla_x $ is the momentum operator. 
	The first wave-packet is given by
	\begin{align}\label{eqn:first_mode}
		\varphi^{\delta}_{0}[q_h,p_h,Q_h,P_h](x)
		=(\pi\delta)^{-d/4}(\operatorname{det}Q_h)^{-1/2}\exp\left( \frac{\iu}{2\delta}(x-q_h)^{T}P_hQ_h^{-1}(x-q_h) + \frac{\iu}{\delta}p_h^{T}(x-q_h) \right),
	\end{align}
	while other wave-packets, labeled by multi-index $ k=(k_1,k_2,\cdots,k_d) $ with non-negative integer entries are given by
	\begin{align}
		\varphi_{k+\langle j \rangle}^{\delta} &= \frac{1}{\sqrt{k_j+1}}\mathcal{A}^{\dagger}_{j}\varphi^{\delta}_{k},\\
		\varphi_{k-\langle j \rangle}^{\delta} &= \frac{1}{\sqrt{k_j}}\mathcal{A}_{j}\varphi^{\delta}_{k}. 
	\end{align}
	Here $ \langle j \rangle $ denotes the $ j$-th unit vector in $ d $-dimension and $j$ takes its value in $1,2,\cdots,d$. $ \varphi_{k-\langle j \rangle} $ is set to be zero if one of the coordinates in multi-index $ k-\langle j \rangle $ is negative. And it can be proved that $ \{\varphi_{k}^{\delta}\}_{k\in\mathbb{N}^d} $ constitute an orthonormal basis of $ \mathbb{L}^2_x(\mathbb{R}^{d}) $, which can be also related to the quantum Hamiltonian operator defined in \eqref{eqn:Hamil_qua}. From the above two operators, we can give the following three term recurrence relations (see, for example, formula (3.28) in \cite{Hagedorn1998})
	\begin{align}\label{eqn:Three_rec}
		x-q_h = \sqrt{\frac{\delta}{2}}\left(  Q_h\mathcal{A}^{\dagger} + \overline{Q}_h \mathcal{A}\right).
	\end{align}
	This formula can be more explicitly written as 
	(see \cite{Lasser2020} for detail)
	\begin{align}\label{eqn:Three_rec2}
		Q_h \left(\sqrt{k_j+1}\varphi_{k+\langle j \rangle}^{\delta}(x)\right)_{j=1}^{d} = \sqrt{\frac{2}{\delta}}(x-q_h)\varphi_{k}^{\delta}(x) - \overline{Q}_h\left(\sqrt{k_j}\varphi_{k-\langle j \rangle}^{\delta}(x)\right)_{j=1}^{d}.
	\end{align}
	Here $ \left(\sqrt{k_j+1}\varphi_{k+\langle j \rangle}^{\delta}(x)\right)_{j=1}^{d} $ and $ \sqrt{\frac{2}{\delta}}(x-q_h)\varphi_{k}^{\delta}(x) $ are regarded as column vectors whose $ j $-th component are $ \sqrt{k_j+1}\varphi_{k+\langle j \rangle}^{\delta}(x) $ and $ \sqrt{\frac{2}{\delta}}(x_j-q_{h,j})\varphi_{k}^{\delta}(x) $ respectively. $ \varphi_{k-\langle j \rangle} $ is set to be zero if one of the coordinates in multi-index $ k-\langle j \rangle $ is negative. $ \overline{Q}_h $ denotes the element-wise conjugation of matrix $ Q_h $. The recurrence relation provides a practical method to compute the basis $ \varphi_{k}^{\delta} $ for any multi-index $ k $ with fixed $ \delta $. It is observed by the author \citep[see][]{Hagedorn1998} that Hagedorn wave-packets in 1d are rescaled Hermite functions. And the first step to prove the convergence of fully discrete HWP methods starts from proving the spectral convergence of the fully discrete Gauss-Hermite spectral method, see \cite{Faou2008} for example. \par 
	In \cite{Faou2009}, the authors use finite superposition of $ \{ \varphi^{\varepsilon}_{k} \}_{k\in\mathbb{N}^d} $ to represent the solution of the original equation \eqref{eqn:SEqn} directly, while there is no rigorous analysis in the article. In \cite{Zhou2014}, it is shown that the error is order $ \mathcal{O}(\varepsilon^{k/2}) $, while $ k $ is related to the number of wave-packets. In the following sections, we will propose the GWPT+HWP method based on the aforementioned methods. 
	
	\subsubsection{Formulation of the GWPT+HWP Method--quadratic case}
	To explain how the method works, let's start from a simple case when $ V(x) $ is a second degree polynomial, so that the residue potential $ U $ as in \eqref{eqn:potential} vanishes. Then the Hamiltonian operator $ \hat{H}(t) $ in \eqref{eqn:wEqn} can be written in the form of
	\begin{align}
		\hat{H}(t) = \hat{H}_q(t) &= \frac{1}{2}\begin{pmatrix}
			-\iu\nabla_x\\x
		\end{pmatrix}^{T} \begin{pmatrix}
			BB^{T}     & 0\\
			0 & 4BB^{T}
		\end{pmatrix}\begin{pmatrix}
			-\iu\nabla_x\\x
		\end{pmatrix}.
	\end{align}
	It is proved in \cite{Hagedorn1998} that if we let $ q_h,p_h,Q_h,P_h $ be time-dependent parameters and introduce another time-dependent parameter $ S_h\in\mathbb{R} $ such that 
	\begin{gather}\label{eqn:HWP_eqn}
		\begin{aligned}
			\dot{q}_h &= BB^{T}p_h, \quad\dot{p}_h = -4BB^{T}q_h,\quad\dot{S}_h = \frac{1}{2} p_h^{T}BB^{T}p_h - 2 q_h^{T}BB^{T}q_h,\\
			\dot{Q}_h &= BB^{T}P_h,\quad \dot{P}_h = -4BB^{T}Q_h,
		\end{aligned}
	\end{gather}
	and suppose further that the initial value can be represented by a convergent series
	\begin{equation}
		w(\eta,0) = \eu^{\iu S_h(0)}\sum_{k\in\mathbb{N}^{d}}c_k(0)\varphi^{1}_k[q_h(0),p_h(0),Q_h(0),P_h(0)](\eta),\quad \{c_{k}\}_{k\in\mathbb{N}^d}\subset\mathbb{C},
	\end{equation}
	then, the solution to equation \eqref{eqn:wEqn} is given by
	\begin{equation}\label{eqn:sol_Harmonic}
		w(\eta,t) = \eu^{\iu S_h(t)}\sum_{k\in\mathbb{N}^{d}}c_k(0)\varphi^{1}_k[q_h(t),p_h(t),Q_h(t),P_h(t)](\eta).
	\end{equation} 
	It can be proved that $ Q_h(t),P_h(t) $ are invertible and relation \eqref{eqn:symp_relation1}-\eqref{eqn:symp_relation2} holds for all $ t>0 $, which guarantees the existence of the first wave-packet. In the case when the initial value is in the form of \eqref{eqn:w_ini}, we can derive the initial values for \eqref{eqn:HWP_eqn}
	\begin{gather}\label{eqn:HWP_ini}
		\begin{aligned}
			q_h(0),p_h(0) &= 0,\quad Q_h(0) = \frac{1}{\sqrt{2}}E_d,\quad P_h(0) = \sqrt{2}\iu E_d,\quad S_h(0) =0.\\
			c_{0}(0)&=\varepsilon^{-d/4},\quad c_{k}(0)=0,\;k\neq 0.
		\end{aligned}
	\end{gather}
	which implies that the sum in \eqref{eqn:sol_Harmonic} is finite. With a further inspection, we see that only $ Q_h,P_h $ depend on time, while $ q_h,p_h,S_h,c_k $ can be shown to be constant in time.\par 
	Therefore, when the external potential $ V $ is quadratic, the full semi-classical quantum dynamics are equivalently recast as the dynamics of the GWPT parameters and the HWP parameters, while significantly less computational cost is needed for simulating the latter systems. 
	\subsubsection{Formulation of the GWPT+HWP method--General Case} 
	To approximate the $ w $ equation \eqref{eqn:wEqn} with general potential function, the residue term $ U $ in \eqref{eqn:wEqn} won't vanish. But recalling that semi-classical wave-packets form a basis of $ \mathbb{L}_x^2(\mathbb{R}^d) $, we can represent the exact solution $ w(\eta,t) $ by an infinite superposition of wave-packets with time-dependent expansion coefficients
	\begin{align}\label{eqn:summation}
		w(\eta,t) = \eu^{\iu S_h(t)}\sum_{k\in\mathbb{N}^{d}}c_k(t)\varphi^{1}_k[q_h(t),p_h(t),Q_h(t),P_h(t)](\eta). 
	\end{align}
	Here $ q_h,p_h,Q_h, P_h,S_h$ are determined by \eqref{eqn:HWP_eqn} and $ \{c_k\}_{k\in \mathbb{N}^d} $ are time-dependent coefficients. To determine the dynamics for the coefficients $ \{ c_k \}_{k\in\mathbb{N}^d} $ from \eqref{eqn:wEqn}, we solve an infinite system of ordinary differential equations 
	\begin{align}\label{eqn:Full_Galerkin}
			\iu\dot{c}_k = \sqrt{\varepsilon} \sum_{l\in\mathbb{N}^{d}}f_{kl}c_l,\quad k\in \mathbb{N}^{d}.
		\end{align}
		where $ \{f_{kl}\}_{k,l\in\mathbb{N}^{d}} $ is an infinite dimensional Hermitian matrix whose elements are determined by
		\begin{align}\label{eqn:Fmatrix}
			f_{kl}(t) = \int_{\mathbb{R}^{d}}\overline{\varphi}^{1}_k(\eta,t)U(\eta,t)\varphi^{1}_l(\eta,t)d\eta,\quad \forall \,k,l\in \mathbb{N}^{d}.
	\end{align}
	It is clear that solving 
	\eqref{eqn:Full_Galerkin}-\eqref{eqn:Fmatrix} and \eqref{eqn:HWP_eqn} is equivalent to solving the Cauchy problem for \eqref{eqn:wEqn}. \par 
	To construct a practical algorithm, we can truncate the series representation in \eqref{eqn:summation}, obtaining the approximation
	
	\begin{align}
		\quad \tilde{w}(\eta,t) = \eu^{\iu S_h}\sum_{k\in\mathcal{K}}c_k(t)\varphi^{1}_k[q_h,p_h,Q_h,P_h](\eta),\quad \#(\mathcal{K})<+\infty,
	\end{align}
	and the dynamics of the coefficients are given by 
	\begin{align}
		\iu\dot{c}_k = \sqrt{\varepsilon} \sum_{l\in\mathcal{K}}f_{kl}c_l,\quad k\in \mathcal{K}.\label{eqn:Galerkin_dynamics}
	\end{align}
	where $ F \triangleq\{f_{kl}\}_{k,l\in\mathcal{K}} $ is determined in \eqref{eqn:Fmatrix} and can be approximated by certain quadrature rule. 
	\begin{remark}\label{rmk:quad_free}
		We remark here that the matrix $ F = \{ f_{kl} \}_{k,l\in\mathcal{K}} $ can be evaluated by a recurrence-free method if the residue $U(\eta,t)$ can be approximated by polynomial functions. Equation \eqref{eqn:Three_rec} tells us that the polynomials can be represented by a linear combination and composition of raising and lowering operators, thus forming the $ F $ matrix can be achieved with far less computations than direct computation of the integral in Eq.~\eqref{eqn:Fmatrix}. But we will leave this topic to our next project.
	\end{remark}
	\begin{remark}\label{rmk:variables}
		Compared with the quadratic case, we only introduce the dynamics of $\{ c_k \}_{k\in \mathbb{N}^{d}}$, so $ q_h,p_h, S_h $ are still constants. Specifically, $ q_h = p_h = 0$.  
	\end{remark}
	To integrate the ODE \eqref{eqn:Galerkin_dynamics} with time step $ \Delta t_c $, one needs GWPT parameters $ q,p,\gamma,\alpha_{R},\alpha_{I}, B $, parameters $ Q_{h},P_{h} $ and evaluations of recurrence relations \eqref{eqn:Three_rec} on some finer time grid with step $\Delta t_{gt}$ 
	with higher order accuracy. Therefore, to update the expansion coefficients $ \{c_k\}_{k\in\mathcal{K}} $ with time step $ \Delta t_c $, the three term recurrence relations \eqref{eqn:Three_rec2} should be estimated on time step of $ \mathcal{O}(\Delta t_c) $. 
	
	From the arguments above, we can give our algorithm:
	\begin{algorithm}
		\caption{GWPT+HWP method to advance by $\Delta t_c$}\label{alg:new_method}
		\begin{algorithmic}[1]
			\State Update the GWPT parameter $ q,p,\gamma,\alpha_{R},\alpha_{I}, B $ and parameters $ Q_{h},P_{h} $ by fourth order RK4 with time step $\Delta t_{gt}$, from current ti    me $t$ to time $t+\Delta t_c$. 
			\State Apply the three-term recurrence relation \eqref{eqn:Three_rec} to calculate $ \{ \varphi^{1}_{k}(\cdot,t) \}_{k\in \mathcal{K}} $ on quadrature points on time step of order $ \mathcal{O}(\Delta t_c) $. 
			\State Update the coefficient vector $ c = \{c_{k}\}_{k\in \mathcal{K}} $ by solving the ODE:
			\begin{align*}
				\iu\dot{c} = \sqrt{\varepsilon}F(t)c,
			\end{align*}
			by RK4 with time step $\Delta t_{c}$. $ F $ is an Hermitian matrix whose elements are given by \eqref{eqn:Fmatrix}
		\end{algorithmic}
	\end{algorithm}\par
	To conclude, this method uses superposition of semi-classical wave-packets to represent the solution, which allows us to deal with the quadratic and perturbation part respectively. This method can also be carried out with sparse grid techniques in several dimensions, which presents a possibility to solve \eqref{eqn:wEqn} in moderately high dimension. What is more, the method only uses wave-packets with fixed width over the computational domain when implemented with GWPT. This fact facilitates the numerical analysis. We will show that this method can resolve the three typical scales, as mentioned in Section \ref{subsec:review_GWPT}.
	\begin{remark}
		As shown later, $ \Delta t_{c} $ can be chosen independent of $ \varepsilon $, while $ \Delta t_{gt} $ needs to be small. And the cost of solving the ODE system is negligible compared with the cost of solving the Galerkin equation \eqref{eqn:Galerkin_dynamics}. So we can choose $\Delta t_{gt}$ such that the error of  we conclude that this method can achieve high accuracy while saving computational cost, when $ \varepsilon\ll 1 $. 
	\end{remark}
	
	\subsubsection{Numerical Complexity of GWPT+HWP}
	
	Now we analyze the numerical complexity of \Cref{alg:new_method}, using quadrature rules to evaluate the integral in \eqref{eqn:Fmatrix}. Here we suppose the total number of quadrature points is $ N_{Q} $. First, we present two possible choices of wave-packets in multiple dimension, both of which can be characterized with a parameter $ n $:
	\begin{align}
		\label{eqn:full_grid}
		\mathcal{K}^{n}_{\infty} &= \{ \varphi_{k}: |k|_{\infty}\le n, k\in \mathbb{N}^{d} \}, \\
		\label{eqn:tri_grid}
		\mathcal{K}^{n}_{1} &= \{ \varphi_{k}: |k|_{1}\le n, k\in \mathbb{N}^{d} \}.
	\end{align} 
	We will show later in the article that $ \mathcal{K}_{1}^{n} $ allows higher accuracy for the same computational cost, see Table \ref{table:FT}. So the number of wave-packets is $ C^{n}_{d+n} $ in the case of $ \mathcal{K}^{n}_{1} $, where $ C^{m}_{n} $ denotes the binomial coefficient representing picking $ m $ unordered outcomes from $ n $ possibilities. We will also show how this choice of wave-packets affects the complexity. \par
	To analyse the numerical complexity, we first recover what sort of operations are involved in each step in \Cref{alg:new_method}. In the first step, the GWPT parameters are calculated. Matrix-vector multiplication takes most of the effort in solving the ODE system \eqref{eqn:HWP_eqn} and \eqref{eqn:ODE}. So the overall cost will be $ \mathcal{O}(N_{gt}d^3) $, where $ N_{gt} = t_f/\Delta t_{gt} $ is the total time steps in solving the parameters $ q,p,\gamma,\alpha_{R},\alpha_{I}, B $ and $ Q_{h},P_{h} $. \par 
	In the second step, we shall assume that the inverse operation when evaluating wave-packets on quadrature points:
	$$ \left( \sqrt{k_j+1}\varphi^{\varepsilon}_{k+\langle j \rangle} \right)_{j=1}^{d} = Q_h^{-1}\left( \sqrt{\frac{2}{\varepsilon}}(x-q_h)\varphi^{\varepsilon}_{k} - \overline{Q}\big( \sqrt{k_j}\varphi^{\varepsilon}_{k-\langle j \rangle} \big)_{j=1}^{d}\right). $$
	will cost $ \mathcal{O}(d^2N_{Q}) $. About $ C_{d+n}^{n} $ three-term recurrence relations are needed to recover the full wave-packets in $ \mathcal{K}^{n}_{1} $, so the total calculation will cost $ \mathcal{O}\big( C_{d+n}^{n}d^2N_{Q} \big) $ operations.\par 
	In the last step in each time loop, we need to calculate the Galerkin matrix $ \{f_{kl}\}_{k,l\in\mathcal{K}} $ given by \eqref{eqn:Fmatrix}. To this purpose, one has to calculate the potential function \eqref{eqn:potential} and the wave-packets on a total number of $ N_Q $ quadrature points. $ \mathcal{O}(d^2N_{Q}) $ operations are needed to evaluate the potential on each quadrature points. Then from the same procedure to estimate the complexity of reconstructing wave-packets on quadrature points, we have to use $ \mathcal{O}\big[ C_{d+n}^{n}N_{Q} + (C_{d+n}^{n})^2N_{Q} \big] $ operations in the last step. \par 
	
	Combining the above arguments,  the total computational cost is: 
	\begin{equation}\label{eqn:complexity}
		\mathcal{O}\Big\{ N_{c}C_{d+n}^{n}d^2N_{Q} + N_c(C_{d+n}^{n})^2N_{Q} +N_{gt}d^3\Big\},
	\end{equation}
	where $ N_c = t_f / \Delta t_c $ is the total time step in solving the coefficients $ \{ c_{k} \}_{k\in \mathcal{K}} $. \par 
	In the above analysis, the number of $ C_{d+n}^{n} $ is reduced largely compared with $ n^{d} $ if $ d\gg n $ and $ N_{Q} $ is at the same scale as $ C_{d+n}^{n} $. This fact follows from the asymptotes for binomial number by Stirling's formula 
	\begin{align}
		C^{n}_{d+n} \sim \bigg(1+\frac{n}{d}\bigg)^{d+\frac{1}{2}}\frac{(n+d)^{n}}{n!\eu^n} \sim \frac{(n+d)^{n}}{n!} ,
	\end{align}
	which grows polynomially with respect to $ d $. Now supposing $ N_Q $ grows polynomially in $ d $, then from \eqref{eqn:complexity}, the overall complexity grows polynomially with respect to $ d $. We will show later that this method will converge faster with respect to the number of wave-packets, and thus requires far less number of wave-packets compared with the Fourier spectral method. 
	\begin{remark}
		According to \Cref{rmk:quad_free}, a quadrature-free method might be developed to evaluate the $ F $ matrix with improved efficiency. In that scenario, we expect that the complexity can be further reduced. Whereas, how the complexity depends on the dimensionality is still yet to be explored, and we shall work on that in the future as well.
	\end{remark} 
	
	\section{Convergence Analysis of the GWPT+HWP Method}\label{sec:Numerical_Analysis}
	Since there is no rigorous convergence analysis for GWPT-based numerical methods prior to this work, we shall estimate the error of the GWPT+HWP method in two steps. First, we establish an error estimate framework in Section \ref{subsec:GWPT_estimate} for generic GWPT-based methods in assembling the errors in approximating the GWPT parameters \eqref{eqn:ODE} and the $w$ equation \eqref{eqn:wEqn} respectively. Considering the convergence analysis for the numerical approximation of the GWPT parameters is rather standard, the error estimate boils down to quantifying the accuracy of approximating the $w$ equation by the Hagedorn's wave-packets when $\delta=1$, which we will show to be spectrally accurate with respect to the number of wave-packets in one dimension in Theorem \ref{thm:approxi}.  \par 
	
	\subsection{Error Estimate of the GWPT}\label{subsec:GWPT_estimate}
	We first study how the errors in solving the $ w $ equation \eqref{eqn:wEqn}-\eqref{eqn:w_ini} and the GWPT parameters \eqref{eqn:ODE} affect the error in solving the semi-classical Schr\"odinger equation \eqref{eqn:SEqn}-\eqref{eqn:Sini_val}. \par 
	We begin by showing the following identity. 
	\begin{lemma}\label{lem:clean}
		If the initial wave function is normalized, then for any $ t>0 $, the following identity holds $$ (\det(\alpha_{I}))^{1/4}= \exp(-\gamma_{I}/\varepsilon).$$  
	\end{lemma}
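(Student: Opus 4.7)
My plan is to take logarithms, turn the proposed identity into a conservation law, and verify it by differentiating in time. Concretely, set $\Phi(t) = \tfrac{1}{4}\log\det(\alpha_I(t)) + \gamma_I(t)/\varepsilon$; the claim is equivalent to $\Phi(t) \equiv 0$. The initial condition $\Phi(0)=0$ is exactly the normalization relation \eqref{eqn:ini_val}, so it suffices to show $\dot\Phi \equiv 0$.

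To compute $\dot\Phi$, I would first extract the ODEs for the real and imaginary parts of $\alpha$ and $\gamma$ from \eqref{eqn:ODE}. Writing $\alpha = \alpha_R + \iu\alpha_I$ in $\dot\alpha = -2\alpha\alpha - \tfrac12 \nabla\nabla V(q)$ and separating real and imaginary parts (using that $\nabla\nabla V(q)$ is real) gives
\begin{equation*}
\dot\alpha_I = -2(\alpha_R\alpha_I + \alpha_I\alpha_R),
\end{equation*}
and from the $\gamma$ equation one reads directly $\dot\gamma_I = \varepsilon\,\mathrm{tr}(\alpha_R)$. Because $\alpha_I(0)$ is positive definite and hence symmetric, and both $-2\alpha\alpha$ and $\nabla\nabla V(q)$ preserve symmetry of $\alpha$ along the flow, $\alpha_I(t)$ stays symmetric; in particular it remains invertible on a neighborhood of $t=0$, and a continuity argument extends this to all $t>0$ on which the ODE system is defined.

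Now I would apply Jacobi's formula $\frac{d}{dt}\log\det(\alpha_I) = \mathrm{tr}(\alpha_I^{-1}\dot\alpha_I)$. Substituting the expression for $\dot\alpha_I$ and using the cyclic invariance of the trace gives
\begin{equation*}
\mathrm{tr}(\alpha_I^{-1}\dot\alpha_I) = -2\,\mathrm{tr}(\alpha_I^{-1}\alpha_R\alpha_I) - 2\,\mathrm{tr}(\alpha_R) = -4\,\mathrm{tr}(\alpha_R).
\end{equation*}
Therefore $\tfrac14 \tfrac{d}{dt}\log\det(\alpha_I) = -\mathrm{tr}(\alpha_R)$, which cancels exactly with $\dot\gamma_I/\varepsilon = \mathrm{tr}(\alpha_R)$, so $\dot\Phi \equiv 0$. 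Combined with $\Phi(0)=0$ from \eqref{eqn:ini_val}, this gives $\Phi(t)\equiv 0$, and exponentiating yields the claimed identity.

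The only real subtlety I anticipate is justifying the symmetry and invertibility of $\alpha_I(t)$ needed to apply Jacobi's formula; this is a short ODE-theoretic argument using the symmetry-preserving structure of \eqref{eqn:ODE} and the fact that $\det(\alpha_I(0))>0$. All other steps are direct calculations from the given ODEs, and conceptually the identity just expresses conservation of the $\mathbb{L}^2_x$ norm of $\psi$ in the GWPT variables.
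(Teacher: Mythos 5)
Your proposal is correct and follows essentially the same route as the paper: both reduce the identity to the ODE facts $\dot\gamma_I=\varepsilon\operatorname{tr}(\alpha_R)$ and $\frac{d}{dt}\det(\alpha_I)=-4\operatorname{tr}(\alpha_R)\det(\alpha_I)$ (via Jacobi's formula), the paper concluding by uniqueness for the common linear ODE $\dot y=-\operatorname{tr}(\alpha_R)y$ while you equivalently exhibit the conserved quantity $\tfrac14\log\det(\alpha_I)+\gamma_I/\varepsilon$. Your extra remark on the symmetry and invertibility of $\alpha_I(t)$ is a minor tightening of a point the paper leaves implicit, not a different argument.
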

	\begin{proof}[Proof of Lemma \ref{lem:clean}]
		One only has to check that $ (\det(\alpha_{I}))^{1/4} $ and $ \exp(-\gamma_{I}/\varepsilon) $ share the same equation and initial value. Their initial values coincide by \eqref{eqn:ini_val}. \par 
		And by chain rule and \eqref{eqn:ODE}, one can derive the equation for $ \exp(-\gamma_{I}/\varepsilon) $.
		$$ \frac{d}{dt}\exp(-\gamma_{I}/\varepsilon) = -\frac{1}{\varepsilon}\exp(-\gamma_{I}/\varepsilon)(\varepsilon \operatorname{tr}(\alpha_R)) = -\operatorname{tr}(\alpha_{R}) \exp(-\gamma_{I}/\varepsilon). $$
		Then another direct calculation leads to
		\begin{align*}
			\frac{d}{dt}(\det(\alpha_I))^{1/4} = \frac{1}{4\det(\alpha_{I})^{3/4}}\frac{d}{dt}(\det(\alpha_I)) = -\operatorname{tr}(\alpha_R)(\det(\alpha_I))^{1/4}.
		\end{align*}\par 
		The uniqueness of the solution to an ordinary differential equation gives the desired result.
	\end{proof}
	
	We denote the exact solution of \eqref{eqn:wEqn} and  \eqref{eqn:ODE} by $ w $ and $ q,p,\gamma,\alpha,B $ respectively. Furthermore, let $ \tilde{w} $ and $ \tilde{q},\tilde{p},\tilde{\gamma},\tilde{\alpha},\tilde{B} $ denote the approximate solutions with error bounds given by
	\begin{align}
		\max_{t\in[0,T]}&\Vert w-\tilde{w} \Vert_{\mathbb{L}^2_{\eta}} \le E_1\varepsilon^{-d/4},\label{eqn:err_PDE}\\
		\max_{t\in [0,T]}\max\big(\Vert q-\tilde{q} \Vert_{\infty},&\Vert p - \tilde{p} \Vert_{\infty},\vert \gamma-\tilde{\gamma} \vert,\Vert \alpha-\tilde{\alpha} \Vert_{\infty},\Vert B-\tilde{B}\Vert_{\infty} \big)\le E_2.\label{eqn:err_ODE}
	\end{align}
	For a complex $d\times d$ matrix $A$ the standard natural norm is adopted:
	$$
	\|A\|_\infty = \max_{\|v\|=1}\|A v\|_\infty, \quad v\in \mathbb{C}^d.
	$$
	Here, $ E_2 $ denotes the error in solving the GWPT parameters, and  $ E_1\varepsilon^{-d/4} $ is the error in solving the $ w $ function, whose $ \mathbb{L}^2 $ norm is $\mathcal{O} \left( \varepsilon^{-d/4} \right)$ manifesting the magnitude of the initial condition for $w$ as in \eqref{eqn:w_ini}. \par  
	To investigate where the error comes from when rebuilding the wave function by approximate solutions $ \tilde{w} $ and $ \tilde{q},\tilde{p},\tilde{\gamma},\tilde{\alpha},\tilde{B} $, let's first recall some facts about GWPT. Notice the wave functions $ \psi, \tilde{\psi} $ are reconstructed by 
	\begin{align}
		\psi(x,t)&=w(\eta(x),t)\exp\{ (\iu/\varepsilon)\theta(x,t) \},\\
		\tilde{\psi}(x,t)&=\tilde{w}(\tilde{\eta}(x),t)\exp\{ (\iu/\varepsilon)\tilde{\theta}(x,t) \}.
	\end{align}
	where 
	\begin{align}\label{eqn:change_coor1}
		\tilde{\eta}(x)&=\varepsilon^{-1/2}\tilde{B}(x-\tilde{q}) ,\\
		\eta(x) &= \varepsilon^{-1/2}B(x-q),\label{eqn:change_coor2}\\
		\theta ( x,t  )&=(x-q(t))^{T}\alpha_{R}(x-q(t)) +p^{T}(t)(x-q(t))+\gamma_{R}(t)+\iu\gamma_{I}(t),\\
		&=  \theta_{R} + \iu\theta_{I}, \notag\\
		\tilde{\theta} ( x,t  )
		&=(x-\tilde{q}(t))^{T}\tilde{\alpha}_{R}(x-\tilde{q}(t) +\tilde{p}^{T}(t)(x-\tilde{q}(t))+\tilde{\gamma}_{R}(t)+\iu\tilde{\gamma}_{I}(t),\\
		&= \tilde{\theta}_R + \iu\tilde{\theta}_{I}. \notag
	\end{align} 
	By triangle inequality
	\begin{gather}\label{eqn:err_GWPT}
		\begin{split}
			\Vert\psi-\tilde{\psi}\Vert_{\mathbb{L}_x^2} 
			\le{}&\Vert w(\eta(x))\exp\{(\iu/\varepsilon)\theta(x)\} -w(\eta(x))\exp\{(\iu/\varepsilon)\tilde{\theta}(x)\}\Vert_{\mathbb{L}_x^2}  \\
			&+ \Vert w(\eta(x))\exp\{ (\iu/\varepsilon)\tilde{\theta}(x) \}- w(\tilde{\eta}(x))\exp\{(\iu/\varepsilon)\tilde{\theta}(x)\} \Vert_{\mathbb{L}_x^2}\\&+\Vert w(\tilde{\eta}(x))\exp\{(\iu/\varepsilon)\tilde{\theta}(x)\}-\tilde{w}(\tilde{\eta}(x))\exp\{(\iu/\varepsilon)\tilde{\theta}(x)\}\Vert_{\mathbb{L}_x^2} .
		\end{split}
	\end{gather}
	Here $ t $ is omitted for simplicity. \par 
	To deal with the third part of RHS in \eqref{eqn:err_GWPT}, where the error in $ w $ equation is dominant, one first notices that, by definition,
	\begin{align*}
		\Vert w(\tilde{\eta}(x))\exp\{(\iu/\varepsilon)\tilde{\theta}\}-\tilde{w}(\tilde{\eta}(x))\exp\{(\iu/\varepsilon)\tilde{\theta}\}\Vert_{\mathbb{L}_x^2} = \exp(-\tilde{\gamma}_{I}/\varepsilon)\Vert w(\tilde{\eta}(x))-\tilde{w}(\tilde{\eta}(x))\Vert_{\mathbb{L}_x^2}.
	\end{align*}
	Then a change of coordinates leads to 
	\begin{align}\label{eqn:eta&tilde}
		\exp(-\tilde{\gamma}_{I}/\varepsilon)\Vert w(\tilde{\eta}(x))-\tilde{w}(\tilde{\eta}(x))\Vert_{\mathbb{L}^2_x}   =\sqrt[4]{\frac{\varepsilon^{d}}{\det(\tilde{B})^2}}\exp(-\tilde{\gamma}_{I}/\varepsilon)\sqrt{\int_{\mathbb{R}^{d}}|w-\tilde{w}|^2d\tilde{\eta}}.
	\end{align}
	Combining \eqref{eqn:change_coor1} and \eqref{eqn:change_coor2}, we have \begin{align}
		\tilde{\eta} = \tilde{B}B^{-1}\eta + \varepsilon^{-1/2}\tilde{B}(q-\tilde{q}).
	\end{align}
	And it follows that 
	\begin{align}
		\sqrt[4]{\frac{\varepsilon^{d}}{\det(B)^2}}\exp(-\tilde{\gamma}_{I}/\varepsilon)\sqrt{\int_{\mathbb{R}^{d}}|w-\tilde{w}|^2d\eta} \leq \frac{\exp(-\tilde{\gamma}_{I}/\varepsilon)}{[\det(\alpha_{I})]^{1/4}}{E_1}.
	\end{align}\par
	Then, we investigate the second part of the RHS in \eqref{eqn:err_GWPT}, where the error is mainly due to the coordinate transformation \eqref{eqn:change_coor1}. From \eqref{eqn:eta&tilde}, we have
	\begin{gather}
		\begin{aligned}
			\Vert w(\tilde{\eta}(x))&\exp\{(\iu/\varepsilon)\tilde{\theta}(x)\} -w(\eta(x))\exp\{(\iu/\varepsilon)\tilde{\theta}(x)\}\Vert_{\mathbb{L}_x^2}\\ &=\sqrt[4]{\frac{\varepsilon^{d}}{\det(\alpha_{I})}}\exp(-\tilde{\gamma}_{I}/\varepsilon) \Vert w(\eta) - w(\tilde{B}B^{-1}\eta + \varepsilon^{-1/2}\tilde{B}(q-\tilde{q})) \Vert_{\mathbb{L}^2_{\eta}}.
		\end{aligned}
	\end{gather}
	By the triangle inequality, we have
	\begin{align}
		C^{\varepsilon}(t)\Vert w(\eta) - w(\tilde{B}B^{-1}\eta&+ \varepsilon^{-1/2}\tilde{B}(q-\tilde{q})) \Vert_{\mathbb{L}^2_{\eta}} \le C^{\varepsilon}(t) \Big[\Vert w(\eta) - w(\eta + \varepsilon^{-1/2}\tilde{B}(q-\tilde{q})) \Vert_{\mathbb{L}^2_{\eta}} \notag\\ 
		& \quad+ \Vert w(\eta + \varepsilon^{-1/2}\tilde{B}(q-\tilde{q})) - w(\tilde{B}B^{-1}\eta +\varepsilon^{-1/2}\tilde{B}(q-\tilde{q})) \Vert_{\mathbb{L}^2_{\eta}}\Big]\label{eqn:second_part}.
	\end{align}
	Here $ C^{\varepsilon}(t) $ equals to $ \sqrt[4]{\frac{\varepsilon^{d}}{\det(\alpha_{I})}}\exp(-\tilde{\gamma}_{I}/\varepsilon) $ and is bounded above for all $ t\in [0,T] $. Provided $ w\in\mathbb{H}^{1}(\mathbb{R}^{d}) $ and 
	\begin{align}\label{eqn:err_H1norm}
		\Vert \nabla w \Vert_{\mathbb{L}^2_{\eta}} \le C_1(T,d)\varepsilon^{-d/4}, \quad \forall t\in[0,T],
	\end{align}
	we can estimate the first part of RHS in \eqref{eqn:second_part} as follows
	\begin{align}
		C^{\varepsilon}(t)\Vert w(\eta) - w(\eta + \varepsilon^{-1/2}\tilde{B}(q-\tilde{q})) \Vert_{\mathbb{L}^2_{\eta}} &\le C^{\varepsilon}(t)\Vert \nabla w(\eta) \Vert_{\mathbb{L}^2_{\eta}} \Vert \varepsilon^{-1/2}\tilde{B}(q-\tilde{q})\Vert_{2} \notag\\
		& \le C^{\varepsilon}(t)\Vert \nabla w(\eta) \Vert_{\mathbb{L}^2_{\eta}}  \varepsilon^{-1/2}E_2\Vert\tilde{B}\Vert_{2} \notag\\
		& \le \frac{C_2(T,d)\exp(-\tilde{\gamma}_{I}/\varepsilon)}{\sqrt[4]{\det(\alpha_{I})}} \sqrt{\varepsilon} E_2 ,\label{eqn:THM1_est1}
	\end{align} 
	as we are going to prove below. 
	The first inequality follows from Proposition 9.3 in \cite{Brezis2010}. 
	\begin{lemma}[\cite{Brezis2010}, Proposition 9.3, page 267]
		Suppose $ u\in \mathbb{H}_{x}^{1}(\mathbb{R}^{d}) $, then there exist a constant $ C_3 = \Vert \nabla u  \Vert_{\mathbb{L}^{2}_x} $ such that for all $ h\in\mathbb{R}^{d} $ we have
		$$  \Vert u(x + h) - u(x) \Vert_{\mathbb{L}^{2}_{x}}  \le C_3 \Vert h \Vert_{2}  $$
	\end{lemma}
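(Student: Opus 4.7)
The plan is to reduce the statement to smooth compactly supported functions and then extend by density. First, I would establish the inequality for $u\in C_c^\infty(\mathbb{R}^d)$. For such $u$ and a fixed $h\in\mathbb{R}^d$, the fundamental theorem of calculus applied to $t\mapsto u(x+th)$ gives the pointwise representation
\begin{equation*}
u(x+h) - u(x) = \int_0^1 \nabla u(x+th)\cdot h\, dt.
\end{equation*}
Applying the Cauchy--Schwarz inequality in the $t$-integral (or just Jensen's inequality, since the measure on $[0,1]$ is a probability measure), one obtains the pointwise bound
\begin{equation*}
|u(x+h) - u(x)|^2 \le \|h\|_2^{\,2}\int_0^1 |\nabla u(x+th)|^2\, dt.
\end{equation*}

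Next, I would integrate this bound in $x$ over $\mathbb{R}^d$ and swap the order of integration by Fubini's theorem. For each fixed $t\in[0,1]$, translation invariance of Lebesgue measure yields $\int_{\mathbb{R}^d}|\nabla u(x+th)|^2\, dx = \|\nabla u\|_{\mathbb{L}^2_x}^{\,2}$, so the inner integral is independent of $t$ and factors out. Taking square roots gives exactly
\begin{equation*}
\|u(\cdot + h) - u\|_{\mathbb{L}^2_x} \le \|h\|_2\,\|\nabla u\|_{\mathbb{L}^2_x},
\end{equation*}
which is the desired estimate for smooth compactly supported $u$.

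Finally, I would extend to arbitrary $u\in \mathbb{H}^1(\mathbb{R}^d)$ by density: choose $u_n\in C_c^\infty(\mathbb{R}^d)$ with $u_n\to u$ in $\mathbb{H}^1$, so in particular $\nabla u_n \to \nabla u$ and $u_n(\cdot+h)\to u(\cdot + h)$ in $\mathbb{L}^2_x$ (the latter since translation is an isometry of $\mathbb{L}^2_x$). Passing to the limit on both sides of the inequality established for $u_n$ preserves the bound. The main (very minor) obstacle is just the density step: one must verify both that $C_c^\infty(\mathbb{R}^d)$ is dense in $\mathbb{H}^1(\mathbb{R}^d)$ (a standard fact, proved e.g.\ by mollification combined with truncation by smooth cutoffs) and that each side of the inequality is continuous in $u$ with respect to the $\mathbb{H}^1$ norm, both of which are routine.
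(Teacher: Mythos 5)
Your proof is correct and is the standard argument (fundamental theorem of calculus along the segment, Cauchy--Schwarz/Jensen in $t$, Fubini with translation invariance, then density of $C_c^\infty$ in $\mathbb{H}^1(\mathbb{R}^d)$); the paper itself gives no proof but simply cites Proposition 9.3 of \cite{Brezis2010}, whose proof proceeds in essentially the same way. No gaps.
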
To estimate the second part of RHS in \eqref{eqn:second_part} we will first give a lemma
	\begin{lemma}\label{lem:estimate_moment}
		Suppose $ u\in \mathcal{S}(\mathbb{R}^d) $, we have 
		\begin{align}
			\Vert u(\eta + \varepsilon^{-1/2}\tilde{B}(q-\tilde{q})) - u(\tilde{B}B^{-1}\eta +\varepsilon^{-1/2}\tilde{B}(q-\tilde{q})) \Vert_{\mathbb{L}^2_{\eta}} \le C_4(T,d) \Vert \eta \nabla u \Vert_{\mathbb{L}^2_{\eta}}E_2.
		\end{align}
		Here $ \Vert \eta \nabla u \Vert_{\mathbb{L}^2_{\eta}} $ denotes the quantity
		\begin{align}
			\Vert \eta \nabla u \Vert_{\mathbb{L}^2_{\eta}} \triangleq \left(\int_{\mathbb{R}^{d}} |\nabla u|^2 |\eta|^2d\eta \right)^{1/2}.
		\end{align} 
	\end{lemma}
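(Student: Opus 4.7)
\smallskip

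The plan is to exploit the fact that the two arguments of $u$ differ precisely by $M\eta$, where $M := \tilde{B}B^{-1} - I$, while the common translation $c := \varepsilon^{-1/2}\tilde{B}(q-\tilde{q})$ cancels in the difference. First I would record two quantitative facts that will be used throughout: the matrix $B(t)$ remains invertible with $\|B(t)^{-1}\|_\infty$ bounded uniformly on $[0,T]$ (since $B^TB=\alpha_I$ and $\alpha_I$ stays positive-definite by the ODE system \eqref{eqn:ODE}), and combined with $\|\tilde{B}-B\|_\infty\le E_2$ this yields $\|M\|_\infty\le C(T,d) E_2$. For $E_2$ sufficiently small, $I+sM$ is then invertible for every $s\in[0,1]$ with uniformly bounded inverse and Jacobian determinant.

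\smallskip

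Next I would apply the fundamental theorem of calculus along the straight-line interpolation $s\mapsto (I+sM)\eta + c$ that connects $\eta+c$ at $s=0$ to $\tilde{B}B^{-1}\eta+c$ at $s=1$:
$$
u(\eta+c) - u(\tilde{B}B^{-1}\eta+c) \;=\; -\int_0^1 \nabla u\bigl((I+sM)\eta+c\bigr)\cdot M\eta \, ds .
$$
Taking the $\mathbb{L}^2_\eta$ norm, using Minkowski's integral inequality, and bounding the pointwise integrand by the Cauchy--Schwarz estimate $|\nabla u\cdot M\eta|\le \|M\|_\infty\,|\eta|\,|\nabla u|$ reduces the task to controlling the family of integrals $\int |\eta|^2\,|\nabla u((I+sM)\eta+c)|^2 \, d\eta$ uniformly for $s\in[0,1]$.

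\smallskip

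For each fixed $s$, I would then perform the change of variables $\zeta = (I+sM)\eta+c$. Thanks to the uniform invertibility established in the first step, $|\eta|=|(I+sM)^{-1}(\zeta-c)|$ is controlled by a constant times $|\zeta-c|$, and the Jacobian factor $|\det(I+sM)|^{-1}$ is uniformly bounded. Combined with the smallness of $\|M\|_\infty$, this converts the integral into a quantity of the form $\int |\zeta-c|^2 |\nabla u(\zeta)|^2 d\zeta$, which is absorbed into $\|\eta\nabla u\|_{\mathbb{L}^2_\eta}^2$ up to a constant depending only on $T$ and $d$; multiplying through by $\|M\|_\infty\le C(T,d)E_2$ then yields the claimed bound with $C_4(T,d)$.

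\smallskip

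The main obstacle is the interplay of two issues in step three: guaranteeing that $I+sM$ is uniformly invertible (so that the change of variables is well-defined and its Jacobian is controlled), and that the weighted norm picked up after substitution, which a priori is $\|(\zeta-c)\nabla u\|_{\mathbb{L}^2_\zeta}$ rather than $\|\zeta\nabla u\|_{\mathbb{L}^2_\zeta}$, can be bundled into the target weighted norm via the Schwartz decay of $u$. Both requirements amount to qualitative smallness assumptions on $E_2$ together with uniform a priori bounds on the GWPT parameters over $[0,T]$; these are inherited from standard well-posedness of the ODE system \eqref{eqn:ODE} and the assumption $u\in\mathcal{S}(\mathbb{R}^d)$, and are the only place where the hypotheses of the lemma genuinely enter.
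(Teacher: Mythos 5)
Your proposal is correct and follows essentially the same route as the paper: the fundamental theorem of calculus along the straight segment joining the two arguments (the common translation cancels), Cauchy--Schwarz/Minkowski, and absorption of the resulting integral into the weighted norm $\Vert \eta\nabla u\Vert_{\mathbb{L}^2_\eta}$ with the factor $\Vert(\tilde{B}-B)B^{-1}\Vert\le C(T,d)E_2$. You are in fact more explicit than the paper about the change of variables, the uniform invertibility of $I+sM$, and the residual shift $c$ in the weight, all of which the paper's two-line estimate leaves implicit.
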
 
	\begin{proof}[Proof of \Cref{lem:estimate_moment}]
		By Taylor's expansion with integral reminder, and a simple identity 
		\begin{align}
			\tilde{B}B^{-1} = E_d + (\tilde{B} - B)B^{-1},
		\end{align}
		we have
		\begin{align*}
			u(\tilde{B}B^{-1}\eta +\varepsilon^{-1/2}\tilde{B}(q-\tilde{q}))&-u(\eta + \varepsilon^{-1/2}\tilde{B}(q-\tilde{q}))  \\&=  \int_{0}^{1} \Big[\nabla u(\eta + \lambda (\tilde{B} - B)B^{-1}\eta) \cdot (\tilde{B} - B)B^{-1}\eta \Big]d\lambda.
		\end{align*}
		Then by Cauchy's inequality
		\begin{align*}
			\Vert u(\tilde{B}B^{-1}\eta +\varepsilon^{-1/2}\tilde{B}(q-\tilde{q}))&-u(\eta + \varepsilon^{-1/2}\tilde{B}(q-\tilde{q})) \Vert_{\mathbb{L}^2_{\eta}}\\
			& \le \left\{ \int_{\mathbb{R}^{d}} \int_{0}^{1}\left| \Big[\nabla u(\eta + \lambda (\tilde{B} - B)B^{-1}\eta) \cdot (\tilde{B} - B)B^{-1}\eta \Big]\right|^{2}d\lambda    d\eta \right\}^{1/2}\\
			& \le C_4(T,d) \Vert \eta \nabla u \Vert_{\mathbb{L}^2_{\eta}}E_2.
		\end{align*}
		Now we have finished the proof of the lemma. 
	\end{proof}
	
	Then, supposing
	\begin{align}\label{eqn:err_H1moment}
		\Vert \eta \nabla w \Vert_{\mathbb{L}^{2}_{\eta}} \le C_5(T,d) \varepsilon^{-d/4}, \quad \forall t \in [0,T],
	\end{align}
	the norm appeared in second part in \eqref{eqn:second_part} can be estimated as follows by \Cref{lem:estimate_moment} and argument of density,
	\begin{align}
		C^{\varepsilon}(t) \Vert w(\eta &+ \varepsilon^{-1/2}\tilde{B}(q-\tilde{q})) - w(\tilde{B}B^{-1}\eta +\varepsilon^{-1/2}\tilde{B}(q-\tilde{q})) \Vert_{\mathbb{L}^2_{\eta}} \notag\\
		& \le \frac{C_6(T,d)\exp(-\tilde{\gamma}_{I}/\varepsilon)}{\det(\alpha_{I})} E_2.
	\end{align}
	Combining the above arguments and estimate \eqref{eqn:THM1_est1}, we have, by \eqref{eqn:second_part},
	\begin{align}
		\Vert w(\eta(x))\exp\{ (\iu/\varepsilon)\tilde{\theta}(x) \}- w(\tilde{\eta}(x))\exp\{(\iu/\varepsilon)\tilde{\theta}(x)\} \Vert_{\mathbb{L}_x^2} \le \frac{C_7(T,d)\exp(-\tilde{\gamma}_{I}/\varepsilon)}{\det(\alpha_{I})} (\varepsilon^{-1/2} + 1) E_2.
	\end{align}
	\par 
	Now we investigate the first part of the inequality in \eqref{eqn:err_GWPT}, where phase error is dominant. First, notice that the phase error can be expanded as follows:
	\begin{gather} \label{eqn:decompose_phase_error}
		\begin{aligned}
			|\exp\{(\iu/\varepsilon)\theta\}-\exp\{(\iu/\varepsilon)\tilde{\theta}\}|^2 &= (\exp(-\gamma_{I}/\varepsilon)-\exp(-\tilde{\gamma}_I/\varepsilon))^2
			\\&\quad+ \exp[-(\gamma_{I}+\tilde{\gamma}_{I})/\varepsilon]\{2-\exp[(\iu/\varepsilon)(\theta_{R}-\tilde{\theta}_{R})]-\exp[(\iu/\varepsilon)(\tilde{\theta}_{R}-\theta_{R})]\}.
		\end{aligned}
	\end{gather}%
	where $ \exp[(\iu/\varepsilon)(\tilde{\theta}_{R}-\theta_{R})] $ is given by a very long formula:
	\begin{gather}\label{eqn:phase}
		\begin{aligned}
			\exp[(\iu/\varepsilon)(\tilde{\theta}_{R}-\theta_{R})] &= \exp\bigg\{ \frac{\iu}{\varepsilon}\Big[ (x-q)^{T}\alpha_{R}(q-\tilde{q}) + (q-\tilde{q})^{T}\alpha_{R}(x-q)  + (q-\tilde{q})^{T}\alpha_{R} (q-\tilde{q}) \\
			&\quad+ (x-q)^{T}(\tilde{\alpha}_{R}-\alpha_{R})(x-
			q)+(q-\tilde{q})^{T}(\tilde{\alpha}_R-\alpha_{R})(x-q)\\
			&\quad+(x-q)^{T}(\tilde{\alpha}_R-\alpha_R)(q-\tilde{q})+(q-\tilde{q})^{T}(\tilde{\alpha}_R-\alpha_R)(q-\tilde{q}) \\
			&\quad + (\tilde{p}-p)^{T}(x-q)+p^{T}(q-\tilde{q})+ (\tilde{p}-p)^{T}(q-\tilde{q}) + ( \gamma_{R}-\tilde{\gamma}_{R} 
			) \Big] \bigg\}.	
		\end{aligned}
	\end{gather}
	Since one can write 
	\begin{align*}
		\exp[-(\gamma_{I}+&\tilde{\gamma}_{I})/\varepsilon]\{2-\exp[(\iu/\varepsilon)(\theta-\tilde{\theta})]-\exp[(\iu/\varepsilon)(\tilde{\theta}-\theta)]\} \\
		&= \exp(-(\gamma_{I}+\tilde{\gamma}_{I})/\varepsilon)( 2 - 2\cos[(\iu/\varepsilon)(\theta_{R}-\tilde{\theta}_{R}))]\\
		&\le \frac{1}{\varepsilon^2}\exp(-(\gamma_{I}+\tilde{\gamma}_{I})/\varepsilon) |\theta_{R} - \tilde{\theta}_{R}|^2.
	\end{align*} 
	By \eqref{eqn:decompose_phase_error} and triangle inequality, one has
	\begin{align*}
		\Vert w(\eta(x))\exp\{(\iu/\varepsilon)\theta\} -w(\eta(x))\exp\{(\iu/\varepsilon)\tilde{\theta}\}\Vert_{\mathbb{L}_x^2} 
		&\le \Vert w  |\exp(-\gamma_{I}/\varepsilon) -\exp(-\tilde{\gamma}_{I}/\varepsilon)|\Vert_{\mathbb{L}_x^2}\\ 
		&+ \frac{1}{\varepsilon}\Vert w \exp(-(\gamma_{I}+\tilde{\gamma}_{I})/\varepsilon)|\theta_R - \tilde{\theta}_R|\Vert_{\mathbb{L}_x^2}.
	\end{align*}
	Combining \eqref{eqn:phase} and supposing $ E_2 $ is sufficiently small, we have,  
	\begin{align}
		|\theta_{R} - \tilde{\theta}_{R}| \le C_8(\alpha_R,q,p)\big(E_2(x-q)^{T}(x-q) + E_2|x-q|+ E_2\big).
	\end{align}
	Here $ C_8(\alpha_{R},q,p) $ is a function of the GWPT parameters $ \alpha_R,q,p $ and in turn is a function of time. This estimate follows from \eqref{eqn:phase} by investigating the terms and omitting the higher order term in $ E_2 $. Therefore, by formal calculation, we have
	\begin{align*}
		\Vert w(\eta(x))\exp\{(\iu/\varepsilon)\theta\} &-w(\eta(x))\exp\{(\iu/\varepsilon)\tilde{\theta}\}\Vert_{\mathbb{L}_x^2} \\
		& \le \big|  \exp(-\gamma_{I}/\varepsilon) -\exp(-\tilde{\gamma}_{I}/\varepsilon)\big| \sqrt[4]{\frac{\varepsilon^{d}}{\det(\alpha_{I})}} \Vert w \Vert_{\mathbb{L}^2_{\eta}}\\
		&\quad + C_9(\alpha_R,q,p)\exp[-(\tilde{\gamma}_{I}+\gamma_I)/(2\varepsilon )](E_2/\varepsilon)\Big(\Vert |x-q|^2w(\eta(x)) \Vert_{\mathbb{L}^2_{x}}\\
		&\quad+\Vert |x-q|w(\eta(x)) \Vert_{\mathbb{L}^2_{x}} + \Vert w(\eta(x)) \Vert_{\mathbb{L}^2_{x}}\Big).
	\end{align*}
	where $ C_9(\alpha_{R},q,p) $ is a time-dependent constant. Let $ M_4 $ be the fourth moment of $ \psi $ defined by 
	\begin{equation}\label{eqn:moment}
		M_4 \triangleq  \max_{t\in[0,T]} \langle \psi(t), |x-q(t)|^4\psi(t)\rangle.
	\end{equation}
	Then we have, by Cauchy's inequality:
	\begin{align*}
		\Vert w(\eta(x))\exp\{(\iu/\varepsilon)\theta\} &-w(\eta(x))\exp\{(\iu/\varepsilon)\tilde{\theta}\}\Vert_{\mathbb{L}_x^2} \\
		&\le \big|  \exp(-\gamma_{I}/\varepsilon) -\exp(-\tilde{\gamma}_{I}/\varepsilon)\big| \sqrt[4]{\frac{\varepsilon^{d}}{\det(\alpha_{I})}} \Vert w \Vert_{\mathbb{L}^2_{\eta}}\\
		&\quad + C_9(\alpha_R,q,p)\exp[-(\tilde{\gamma}_{I}-\gamma_I)/(2\varepsilon )](E_2/\varepsilon)(\sqrt{M_4}+\sqrt[4]{M_4} + 1)
	\end{align*}
	The $ \mathbb{L}^2(\mathbb{R}^d) $ norm of $ w $ is a constant $ \varepsilon^{-d/4} $ supposing the initial wave function is normalized. Notice another trivial inequality $ 2\sqrt[4]{M_4} \le \sqrt{M_4} + 1 $, the estimate becomes:
	\begin{align*}
		\Vert w(\eta(x))\exp\{(\iu/\varepsilon)\theta\} -w(\eta(x))\exp\{(\iu/\varepsilon)\tilde{\theta}\}\Vert_{\mathbb{L}_{x}^2}&\le \big|  \exp(-\gamma_{I}/\varepsilon) -\exp(-\tilde{\gamma}_{I}/\varepsilon)\big| \det(\alpha_{I})^{-1/4} \\
		&\quad + C_{10}(\alpha_{R},q,p)\exp(-(\tilde{\gamma}_{I}-\gamma_I)/\varepsilon )(E_2/\varepsilon)(\sqrt{M_4} + 1).
	\end{align*}
	Assuming \eqref{eqn:err_ODE} and $E_2 =  \mathcal{O}(\varepsilon) $, it can be guaranteed that:
	\begin{align}
		|\exp(-(\gamma_{I}-\tilde{\gamma}_{I})/\varepsilon)-1|&\le D_1E_2/\varepsilon,\label{eqn:err_phase1}\\
		\exp(-(\tilde{\gamma}_{I}-\gamma_{I})/\varepsilon) &\le D_2. \label{eqn:err_phase2}
	\end{align} 
	where $ D_1,D_2 $ are constants independent of $ \varepsilon $, but may depend on $ T $. Now combining this statement and \Cref{lem:clean}, we have
	\begin{theorem}\label{thm:GWPT}
		Let $ q,p,\gamma,\alpha,B $ be the exact solution to the ordinary differential equations \eqref{eqn:ODE} and $ w $ be the exact solution to the $ w $ equation \eqref{eqn:wEqn}. Furthermore suppose the $ w $ equation and the GWPT parameters are approximated by $ \tilde{w} $ and $ \tilde{q},\tilde{p},\tilde{\gamma},\tilde{\alpha},\tilde{B} $ with error as follows
		\begin{gather*}
			\max_{t\in[0,T]}\Vert w-\tilde{w} \Vert_{\mathbb{L}^2_{\eta}}\le E_1\varepsilon^{-d/4},\\
			\max_{t\in [0,T]}\max\big(\Vert q-\tilde{q} \Vert_{\infty},\Vert p - \tilde{p} \Vert_{\infty},| \gamma-\tilde{\gamma} |,\Vert \alpha-\tilde{\alpha} \Vert_{\infty},\Vert B-\tilde{B}\Vert_{\infty} \big)\le E_2 .
		\end{gather*}
		If for $ t\in[0,T] $ the fourth moment is finite
		$$ M_4 = \langle \psi, |x-q|^4\psi \rangle < +\infty,\quad t\in[0,T]. $$ 
		and condition \eqref{eqn:err_H1norm}, \eqref{eqn:err_H1moment}, \eqref{eqn:err_phase1} and \eqref{eqn:err_phase2} hold. Further suppose $ E_2\sim \mathcal{O}(\varepsilon) $, then the following estimate holds
		\begin{align}\label{eqn:GWPT_estimate}
			\Vert\psi-\tilde{\psi}\Vert_{\mathbb{L}_{x}^2}\le A_1E_1 + A_2E_2(1+\varepsilon^{-1/2}) +A_{3}(\alpha_{R},q,p)(E_2/\varepsilon)(\sqrt{M_4}+1). 
		\end{align}
		where $ A_{3}(\alpha_{R},q,p) $ is a function of $\alpha_{R},q,p $, and therefore depends on time. $ A_1,A_2 $ are positive constants which are only dependent on $ T $ and dimensionality $ d $.
	\end{theorem}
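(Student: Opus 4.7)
The plan is to follow the decomposition already signalled in the discussion leading up to the statement: apply the triangle inequality \eqref{eqn:err_GWPT} to split $\|\psi-\tilde\psi\|_{\mathbb{L}^2_x}$ into three terms that isolate, respectively, the phase error (coming from $\theta-\tilde\theta$), the coordinate-transformation error (coming from $\eta-\tilde\eta$), and the $w$-equation error (coming from $w-\tilde w$). Each piece will then be controlled separately, and the hypothesis $E_2 = \mathcal{O}(\varepsilon)$ together with Lemma \ref{lem:clean} will be used at the end to absorb all residual exponential factors into $\varepsilon$-independent constants.

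First I would handle the third term (the $w$-equation error). Changing variables from $x$ to $\tilde\eta = \varepsilon^{-1/2}\tilde{B}(x-\tilde{q})$ produces a Jacobian factor $\sqrt[4]{\varepsilon^{d}/\det(\tilde B)^{2}}$; combining this with Lemma \ref{lem:clean} applied to $\tilde\gamma$ (and with \eqref{eqn:err_phase2} to compare $\det(\tilde\alpha_I)^{1/4}$ and $\det(\alpha_I)^{1/4}$) and with the hypothesis \eqref{eqn:err_PDE} yields a contribution of the form $A_1 E_1$ with $A_1 = A_1(T,d)$. Next, for the coordinate-change term, I would use the identity $\tilde\eta - \eta = (\tilde B B^{-1} - E_d)\eta + \varepsilon^{-1/2}\tilde B(q-\tilde q)$ to insert $w(\eta+\varepsilon^{-1/2}\tilde B(q-\tilde q))$. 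The resulting two pieces are bounded by the Brezis proposition (with \eqref{eqn:err_H1norm}) and by Lemma \ref{lem:estimate_moment} (with \eqref{eqn:err_H1moment}), producing respectively a factor $\varepsilon^{-1/2} E_2$ and a factor $E_2$; together they yield $A_2 E_2 (1+\varepsilon^{-1/2})$.

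For the first (phase) term, I would expand $|\exp\{(\iu/\varepsilon)\theta\} - \exp\{(\iu/\varepsilon)\tilde\theta\}|^{2}$ as in \eqref{eqn:decompose_phase_error}. The amplitude piece $(\exp(-\gamma_I/\varepsilon)-\exp(-\tilde\gamma_I/\varepsilon))^{2}$ is handled using \eqref{eqn:err_phase1} and Lemma \ref{lem:clean}, giving a contribution of order $E_2/\varepsilon$ once multiplied by $\|w\|_{\mathbb{L}^{2}_\eta}\sqrt[4]{\varepsilon^{d}/\det(\alpha_I)}$. The oscillatory piece is bounded by $|\theta_R-\tilde\theta_R|^{2}/\varepsilon^{2}$ against the weight $\exp(-(\gamma_I+\tilde\gamma_I)/\varepsilon)$. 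Reading off $\theta_R - \tilde\theta_R$ from \eqref{eqn:phase}, every summand factors as $E_2$ times a polynomial in $(x-q)$ of degree at most two; inserting $|w(\eta(x))|$ as weight in $\mathbb{L}^{2}_{x}$ converts these polynomial factors into moments of $\psi$, which by Cauchy--Schwarz are controlled by $\sqrt{M_4}+\sqrt[4]{M_4}+1 \le \tfrac{3}{2}(\sqrt{M_4}+1)$. Using \eqref{eqn:err_phase2} to keep the exponential factor uniformly bounded then produces the claimed $A_3(\alpha_R,q,p)(E_2/\varepsilon)(\sqrt{M_4}+1)$.

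The main obstacle is the careful bookkeeping of the many exponential factors $\exp(\pm\gamma_I/\varepsilon)$ and $\exp(\pm\tilde\gamma_I/\varepsilon)$ that arise from changes of variables and from the amplitude part of the phase: each must be either rewritten in terms of $\det(\alpha_I)^{-1/4}$ via Lemma \ref{lem:clean} or compared between the tilded and untilded versions via \eqref{eqn:err_phase1}--\eqref{eqn:err_phase2}, and this is precisely where the smallness assumption $E_2 = \mathcal{O}(\varepsilon)$ is essential, since otherwise the ratio $\exp((\gamma_I - \tilde\gamma_I)/\varepsilon)$ would blow up. Once these factors are uniformly absorbed into the constants $A_1, A_2, A_3$, summing the three contributions produces \eqref{eqn:GWPT_estimate}.
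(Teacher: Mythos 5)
Your proposal is correct and follows essentially the same route as the paper: the same three-way triangle-inequality decomposition \eqref{eqn:err_GWPT}, the same change of variables plus Lemma \ref{lem:clean} for the $w$-error term, the same Brezis proposition and Lemma \ref{lem:estimate_moment} (with \eqref{eqn:err_H1norm} and \eqref{eqn:err_H1moment}) for the coordinate-change term, and the same expansion \eqref{eqn:decompose_phase_error} with the $M_4$ moment bound and \eqref{eqn:err_phase1}--\eqref{eqn:err_phase2} for the phase term. No substantive difference from the paper's argument.
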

	\begin{remark}
		Notice when $ \varepsilon $ is not small, the estimates in \eqref{eqn:err_phase1} and \eqref{eqn:err_phase2} also holds. So the previous theorem is still valid even $ \varepsilon $ is not small. Practically, this theorem tells us the error of solving the GWPT parameters should not exceed $ \mathcal{O}(\varepsilon) $ when $ \varepsilon $ is small.When $ \varepsilon $ is large, we only have to make sure that $ E_2 $ is small. However, this algorithm is not efficient as traditional solvers when $\varepsilon\sim \mathcal{O}(1)$.
	\end{remark}
	\begin{remark}
		Heuristically, the estimate \eqref{eqn:err_H1norm} and \eqref{eqn:err_H1moment} hold since the $ \mathcal{O}(\sqrt{\varepsilon}) $ perturbation will not affect the solution too much. Still, we provide a rigorous proof for the two estimates in the appendix. See Appendix A for details.
	\end{remark}
	
	\subsection{The Error of Solving \eqref{eqn:wEqn} Using GWPT+HWP, 1d case}
	Theorem \ref{thm:GWPT} tells us that we can control the error when solving the semi-classical Schr\"odinger equation \eqref{eqn:SEqn} with the GWPT. The error of ODE solvers can be given by standard analysis, while the error of solving \eqref{eqn:wEqn},  \eqref{eqn:potential} with HWP requires some analysis, which will be performed in this section. \par 
	An available framework to prove the spectral convergence is given in \cite{Pasciak1980}. To apply this framework, we have to give approximating property w.r.t. the number of some certain basis, e.g., Fourier modes in \cite{Pasciak1980}. We will present an approximation theorem of semi-classical wave-packets in section \ref{subsubsec:approxi}. \par 
	Then, combining the above arguments, we will prove the a priori error bound in solving the $ w $ equation \eqref{eqn:wEqn} by the HWP method.
	\subsubsection{General formula of $ P_h,Q_h $ in 1d}
	As we mentioned in \Cref{subsec:formulation}, HWP parameters $ Q_h,P_h $ satisfy the following sets of differential equations in 1d, where scalars are commutative,
	\begin{equation}\label{eqn:QP}
		\begin{split}
			\dot{Q}_{h}&=\alpha_{I}P_{h},\\
			\dot{P}_{h}&=-4\alpha_{I}Q_{h}.
		\end{split}
	\end{equation}
	Furthermore, one notice that $ Q_h,P_h $ satisfy the relation \eqref{eqn:symp_relation2}, which in 1d becomes
	\begin{equation}\label{sym_relation}
		\overline{Q}_hP_h - \overline{P}_hQ_h = 2\iu.
	\end{equation}
	From \eqref{eqn:HWP_ini}, one can derive the initial value for \eqref{eqn:QP},
	\begin{equation}\label{eqn:ini_val_1d}
		Q_h(0) = \frac{1}{\sqrt{2}},\quad P_h(0) = \iu\sqrt{2}.
	\end{equation} 
	Now it is well known that
	\begin{align}\label{eqn:ODE_sol}
		Q_h &= \frac{1}{\sqrt{2}}\exp\bigg( 2\iu\int_{0}^{t}\alpha_{I}(s)ds \bigg),\quad P_h = \sqrt{2}\iu\exp\bigg( 2\iu\int_{0}^{t}\alpha_{I}(s)ds \bigg).
	\end{align}
	This turns out to be a very good property, which guarantees the semi-classical wave-packets given by \eqref{eqn:first_mode} and recurrence relations have fixed width.

	\subsubsection{Approximation with wave-packets after the GWPT}\label{subsubsec:approxi}
	Now we can state and prove the approximating properties of semi-classical wave-packets after the GWPT. The key observation is that Hagedorn wave-packets in 1d are rescaled Hermite functions.\citep[see][]{Hagedorn1998}\par
	And it is indeed proved in 
	\cite{Hagedorn1998} that for all $ k\in\mathbb{N} $, Hagedorn's wave-packet can be explicitly represented by
	\begin{align}\label{Hage_wp}
		\varphi_{n}^{\delta}[0,0,Q_h,P_h](x) = 2^{-n/2}(n!)^{-1/2}F_{n}(Q_h,\delta,x)\varphi_0^{\delta}[0,0,Q_h,P_h](x),
	\end{align}
	where
	\begin{align}
		F_{n}(Q_h,\delta,x) &= Q_{h}^{-n/2}(\overline{Q}_h)^{n/2}H_{n}(\delta^{-1/2}|Q_h|^{-1}x),\\
		H_{n}(x) &= \eu^{x^2}\left( -\frac{\partial}{\partial x} \right)^{n}\eu^{-x^2},\\
		\varphi_0^{\delta}[0,0,Q_h,P_h](x) &= (\pi\delta)^{-1/4}Q_h^{-1/2}\exp\left( \frac{\iu}{2\delta}P_hQ_h^{-1}x^2 \right).
	\end{align}\par
	So we can expect some approximations similar to the ones appearing in the theorems given in \cite{Shen2011}. The final goal is to prove similar properties of Hagedorn's wave-packets. In the following part, we will take the weight to be $ \omega = \exp(-|Q_h|^{-2}x^2) $ and $ F_n(x) = F_n(Q_h,1,x) $. We first review some basic properties of Hermite functions and Hermite polynomials, whose proof can be found in, e.g., section 7.2.1 of \cite{Shen2011}.
	\begin{lemma}
		The following equalities hold:
		\begin{align}
			\partial^{k}_xF_n(x) &= \frac{2^kn!}{Q_h^k(n-k)!}F_{n-k},\\
			\int_{\mathbb{R}}H_{m}H_{n}\exp(-x^2)dx &= \sqrt{\pi}2^{n}n!\delta_{mn},  \\
			\int_{\mathbb{R}}\partial_x^kF_{m}(x)\overline{\partial_x^{k}F}_{n}(x)\exp(-|Q_h|^{-2}x^2)dx &=
			h_{n,k} \delta_{mn},
		\end{align}
		where $ \delta_{mn} $ is the Kronecker symbol, and 
		\[
		h_{n,k} = \frac{2^kn!}{|Q_h|^{2k-1}(n-k)!}\sqrt{\pi}2^n n!
		\]
	\end{lemma}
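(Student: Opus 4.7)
The plan is to prove the three identities in sequence, building on the classical identity $H_n'(y)=2nH_{n-1}(y)$ for the physicist's Hermite polynomials together with the orthogonality relation that underlies (ii).

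First I would establish (i). Starting from the definition $F_n(x)=Q_h^{-n/2}\overline{Q}_h^{\,n/2}H_n(|Q_h|^{-1}x)$, a single application of the chain rule combined with $H_n'(y)=2nH_{n-1}(y)$ yields
\begin{equation*}
\partial_x F_n(x)=Q_h^{-n/2}\overline{Q}_h^{\,n/2}\,|Q_h|^{-1}\,2n\,H_{n-1}(|Q_h|^{-1}x).
\end{equation*}
Rewriting $|Q_h|^{-1}=(Q_h\overline{Q}_h)^{-1/2}$ and grouping the prefactors so as to reconstruct $F_{n-1}$ gives $\partial_x F_n=2n\,Q_h^{-1}F_{n-1}$, which is (i) for $k=1$. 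The general $k$ then follows by a straightforward induction on $k$, since each further derivative contributes another factor $2(n-j)/Q_h$ and the falling factorials telescope into $n!/(n-k)!$.

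Next, (ii) is the classical orthogonality relation for the physicist's Hermite polynomials $H_n$ with respect to the Gaussian weight $e^{-x^2}$. The quickest self-contained route is via the generating function $e^{2xt-t^2}=\sum_{n\ge 0}H_n(x)t^n/n!$: computing $\int_{\mathbb{R}}e^{2xt-t^2}e^{2xs-s^2}e^{-x^2}\,dx=\sqrt{\pi}\,e^{2ts}$ and matching coefficients of $t^m s^n$ on both sides yields exactly $\sqrt{\pi}\,2^n n!\,\delta_{mn}$.

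Finally, (iii) is obtained by combining (i) and (ii). Using (i) on both factors,
\begin{equation*}
\partial_x^k F_m(x)\overline{\partial_x^k F_n}(x)=\frac{2^{2k}m!\,n!}{Q_h^k\overline{Q}_h^{\,k}(m-k)!(n-k)!}F_{m-k}(x)\overline{F_{n-k}(x)}.
\end{equation*}
Expanding $F_{m-k}\overline{F_{n-k}}$ produces a constant prefactor $Q_h^{(n-m)/2}\overline{Q}_h^{\,(m-n)/2}$ times $H_{m-k}(|Q_h|^{-1}x)H_{n-k}(|Q_h|^{-1}x)$. Performing the change of variables $y=|Q_h|^{-1}x$ converts the weight $e^{-|Q_h|^{-2}x^2}$ into $e^{-y^2}$ and contributes a Jacobian $|Q_h|$; applying (ii) then forces $m=n$, at which point the phase factor collapses to $1$ and $|Q_h|^{-2k}|Q_h|=|Q_h|^{-(2k-1)}$. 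Collecting the constants reproduces $h_{n,k}=\frac{2^k n!}{|Q_h|^{2k-1}(n-k)!}\sqrt{\pi}\,2^n n!$ as claimed.

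None of the steps presents a genuine obstacle; the only place where care is required is the bookkeeping of the powers of $Q_h$ and $\overline{Q}_h$ in (iii), where one must verify that the $Q_h^k\overline{Q}_h^{\,k}$ from (i), the $Q_h^{(n-m)/2}\overline{Q}_h^{\,(m-n)/2}$ from the $F_{m-k}\overline{F_{n-k}}$ expansion, and the Jacobian $|Q_h|$ combine consistently with the diagonal constraint $m=n$ to give the stated real, positive normalization $h_{n,k}$.
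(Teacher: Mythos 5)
Your proof is correct. The paper itself gives no proof of this lemma---it simply refers the reader to Section 7.2.1 of the cited Shen--Tang--Wang book---and your argument is exactly the standard one that source uses: $H_n'=2nH_{n-1}$ plus the chain rule for (i), the generating-function computation for (ii), and the substitution $y=|Q_h|^{-1}x$ for (iii). I verified the bookkeeping: in (i) the prefactor collapses to $2n\,Q_h^{-1}$ since $Q_h^{-1/2}\overline{Q}_h^{1/2}(Q_h\overline{Q}_h)^{-1/2}=Q_h^{-1}$, and in (iii) the diagonal case gives $\frac{2^{2k}(n!)^2}{|Q_h|^{2k}((n-k)!)^2}\cdot|Q_h|\sqrt{\pi}\,2^{n-k}(n-k)!=h_{n,k}$ as stated.
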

	To state the first theorem, we will present the definition of weighted $ \mathbb{L}^2_{x,\omega} $ spaces. 
	\begin{defini}
		The space of weighted $ \mathbb{L}_x^2 $ functions, denoted by $ \mathbb{L}^2_{x,\omega} $ is given by
		\begin{equation}\label{eqn:weighted_norm}
			\mathbb{L}^2_{x,\omega} \triangleq \left\{ f\in \mathbb{L}^{1}_{loc}: \int_{\mathbb{R}}|f|^2\omega dx<\infty \right\},
		\end{equation}
		where $ \omega> 0 $ is the weight function and $ x $ denotes the integral variable. It is a Hilbert space whose norm is given by 
		\begin{equation}
			\Vert  f \Vert^{2}_{\mathbb{L}^{2}_{x,\omega}}\triangleq \int_{\mathbb{R}} |f|^{2}\omega(x) dx.
		\end{equation} 
		Similarly, we can define the weighted Sobolev spaces by 
		\begin{align}
			\mathbb{H}^{m}_{x,\omega}\triangleq  \left\{ f \in \mathbb{L}^{1}_{loc}: \partial_x^{\beta}f \in \mathbb{L}^{2}_{x,\omega},\; \forall|\beta|\le m, \beta \in \mathbb{N}^{d} \right\}.
		\end{align}
		And its corresponding norm $ \Vert \cdot \Vert_{\mathbb{H}^{m}_{x,\omega}} $ is given by 
		\begin{align}
			\Vert f \Vert_{\mathbb{H}^{m}_{x,\omega}} \triangleq \sum_{|\beta|\le m} \Vert \partial_x^{\beta} f \Vert_{\mathbb{L}^{2}_{x,\omega}}.
		\end{align}
	\end{defini} 
	
	\begin{theorem}\label{thm:Weighted_Estimate}
		For any given $ u\in \mathcal{S}(\mathbb{R}) $ with $ 0\le l\le m \le N+1 $. Then we have:
		\begin{equation}\label{thm1}
			\Vert \partial_x^{l}(\Pi_N u - u) \Vert_{\mathbb{L}^{2}_{x,\omega}} \le \frac{|Q_h|^{m-l}}{2^{(m-l)/2}}\sqrt{\frac{(N-m+1)!}{(N-l+1)!}}\Vert \partial^{m}_{x}u \Vert_{\mathbb{L}^{2}_{x,\omega}}.
		\end{equation}
		Here $ \Pi_{N} $ is the projection onto the space spanned by the first $ N+1 $ Hermite polynomials, i.e.  
		\begin{equation}\label{eqn:projection}
			\int_{\mathbb{R}} (u-\Pi_{N}u)v_N\omega dx = 0,\quad \forall v_N\in P_N = \operatorname{span}\big\{H_i(|Q_h|^{-1}x)\big\}_{i=0}^{N}.
		\end{equation}
		and the weighted $ \mathbb{L}_x^2 $ norm is given by \eqref{eqn:weighted_norm}. $\mathcal{S}(\mathbb{R}) $ denotes the Schwartz space.
	\end{theorem}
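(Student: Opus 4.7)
The plan is to follow the classical Hermite-spectral strategy from \cite{Shen2011}, specialised to the rescaled variables dictated by $Q_h$, and to exploit the two key algebraic facts that have already been recorded in the preceding lemma: the orthogonality relation for the $F_n$ in the weighted inner product, and the lowering formula $\partial_x^k F_n = \frac{2^k n!}{Q_h^k (n-k)!}F_{n-k}$. The overall idea is that once $u$ is expanded in the basis $\{F_n\}_{n\ge 0}$, all three quantities appearing in the inequality $(\Pi_N u - u,\,\partial_x^l(\Pi_N u - u),\,\partial_x^m u)$ become explicit series in the same Fourier-Hermite coefficients $\hat u_n$, and the estimate reduces to a purely term-by-term comparison.

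First I would expand $u\in\mathcal{S}(\mathbb{R})$ as $u = \sum_{n\ge 0}\hat u_n F_n$, with coefficients determined by the orthogonality relation from the preceding lemma, so that the projection defined in \eqref{eqn:projection} takes the clean form $\Pi_N u = \sum_{n=0}^{N}\hat u_n F_n$ and the error is $u-\Pi_N u = \sum_{n\ge N+1}\hat u_n F_n$. Next I would differentiate $l$ times using $\partial_x^l F_n = \frac{2^l n!}{Q_h^l (n-l)!}F_{n-l}$ and compute the weighted norm by orthogonality, obtaining
\begin{equation*}
\Vert\partial_x^l(u-\Pi_N u)\Vert_{\mathbb{L}^2_{x,\omega}}^2 = \sum_{n\ge N+1}|\hat u_n|^2 \,\frac{4^l(n!)^2}{|Q_h|^{2l}((n-l)!)^2}\,h_{n-l,0}.
\end{equation*}
Exactly the same computation, applied to $\partial_x^m u$ in full, yields a lower bound $\Vert\partial_x^m u\Vert_{\mathbb{L}^2_{x,\omega}}^2 \ge \sum_{n\ge N+1}|\hat u_n|^2\,\frac{4^m(n!)^2}{|Q_h|^{2m}((n-m)!)^2}\,h_{n-m,0}$ (discarding the tail $n\le N$, which is non-negative).

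The core step is then a term-by-term comparison: dividing the summands and substituting $h_{k,0}=|Q_h|\sqrt{\pi}\,2^k k!$, one finds that the ratio collapses to $2^{l-m}|Q_h|^{2(m-l)}\,\frac{(n-m)!}{(n-l)!}$. Because $l\le m\le N+1$, the factor $(n-m)!/(n-l)!$ is a product of $(m-l)$ reciprocals of increasing integers and hence is maximised at the smallest admissible index $n=N+1$, giving $(N-m+1)!/(N-l+1)!$ as the sharp bound. Factoring out this constant, the remaining series is controlled by $\Vert\partial_x^m u\Vert_{\mathbb{L}^2_{x,\omega}}^2$, and taking square roots delivers \eqref{thm1}.

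I expect the computations themselves to be routine; the main obstacles are bookkeeping rather than mathematical. Specifically, one must keep careful track of the complex factors $Q_h$ versus $|Q_h|$ (since $\overline{Q}_h Q_h = |Q_h|^2$, so the $Q_h^{-k}$ appearing in $\partial_x^k F_n$ combines with its conjugate to produce $|Q_h|^{-2k}$ in the real norm), and one must verify the monotonicity in $n$ of the ratio $(n-m)!/(n-l)!$ to justify extracting the maximum at $n=N+1$. The hypothesis $m\le N+1$ is precisely what keeps $(N-m+1)!$ well defined. Finally, since $u\in\mathcal{S}(\mathbb{R})$, the Hermite expansion converges in every weighted Sobolev norm $\mathbb{H}^m_{x,\omega}$, so no additional density argument is required to justify the term-wise manipulations.
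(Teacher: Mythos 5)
Your proposal is correct and follows essentially the same route as the paper: expand $u$ in the $\{F_n\}$ basis, express $\Vert\partial_x^l(\Pi_N u-u)\Vert^2_{\mathbb{L}^2_{x,\omega}}$ and $\Vert\partial_x^m u\Vert^2_{\mathbb{L}^2_{x,\omega}}$ as series in $|\hat u_n|^2$ via the lowering formula and orthogonality, and bound term-by-term by the ratio $h_{n,l}/h_{n,m}=2^{l-m}|Q_h|^{2(m-l)}(n-m)!/(n-l)!$, maximised at $n=N+1$. The only cosmetic difference is that you re-derive $h_{n,l}$ from $h_{n-l,0}$ where the paper quotes it directly from the preceding lemma.
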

	\begin{proof}
		It is obvious that for $ u\in\mathbb{L}^{2}_{x,\omega} $, 
		$$ u(x) = \sum_{n=0}^{\infty}\hat{u}_n F_n(x), \quad \text{with } \hat{u}_n =  \frac{\langle u,F_n \rangle_{x,\omega}}{\langle F_n,F_n \rangle_{x,\omega}}. $$
		Then one has
		\begin{align*}
			\Vert \partial_x^l (\Pi_Nu - u)\Vert_{\mathbb{L}^{2}_{x,\omega}}^2 &= \sum_{n=N+1}^{\infty}h_{n,l}|\hat{u}_n|^2\\
			&\le\max_{n\ge N+1}\left\{ \frac{h_{n,l}}{h_{n,m}} \right\} \sum_{n=N+1}^{\infty}h_{n,m}|\hat{u}_n|^2.
		\end{align*}
		where $ \hat{u}_n $ are the expansion coefficients. 
		By the following fact, 
		\begin{align}
			\max_{n \ge N+1}\left\{ \frac{|Q_h|^{2m-2l}}{2^{m-l}}\frac{(n-m)!}{(n-l)!} \right\}& = \frac{|Q_h|^{2m-2l}}{2^{m-l}}\frac{1}{(N-l+1)(N-l)\cdots(N-m+2)}.
		\end{align}
		We have, 
		\begin{align}
			\Vert \partial_x^l (\Pi_Nu - u)\Vert_{\mathbb{L}^{2}_{x,\omega}}^2 \le \frac{h_{N+1,l}}{h_{N+1,m}}\Vert \partial_x^mu \Vert_{\mathbb{L}^{2}_{x,\omega}}^2. 
		\end{align}
	\end{proof}
	
	From the estimates for the approximation error in weighted norm, we can give the error from the raising operator. First we will give a useful lemma representing the lowering operator in 1d.
	\begin{lemma}\label{lem:Lowering_explicit}
		For any $ u\in \mathcal{S}(\mathbb{R}) $ and fixed positive integer $ l $. Suppose further that $ p_h = q_h = 0 $, we have
		\begin{align}\label{commutator}
			\exp\left(\frac{\iu}{2}P_hQ_h^{-1}x^2\right)\partial_x^{l} \left[ u\exp\left(-\frac{\iu}{2}P_hQ_h^{-1}x^2\right) \right] = 2^{l/2}Q_{h}^{-l/2}\mathcal{A}^{l}u.
		\end{align}
		where $ \mathcal{A} $ is the lowering operator given in \eqref{eqn:lowering_op} when $ \delta = 1 $.
	\end{lemma}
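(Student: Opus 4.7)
The plan is to prove the identity by conjugation of the derivative operator by the Gaussian exponential, which avoids any awkward bookkeeping. Write $A(x) = \tfrac{\iu}{2}P_h Q_h^{-1}x^2$, so that $\partial_x A = \iu P_h Q_h^{-1} x$. The key observation is the operator identity
\begin{equation*}
\eu^{A}\partial_x\eu^{-A} \;=\; \partial_x - \partial_x A \;=\; \partial_x - \iu P_h Q_h^{-1}x,
\end{equation*}
obtained from the product rule acting on an arbitrary test function. Iterating this conjugation gives $\eu^{A}\partial_x^{l}\eu^{-A} = (\eu^{A}\partial_x\eu^{-A})^{l} = (\partial_x - \iu P_h Q_h^{-1}x)^{l}$.

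The next step is to recognize the bracket as a scalar multiple of the lowering operator $\mathcal{A}$. Substituting $p_h=q_h=0$, $\delta=1$, and $\hat p=-\iu\partial_x$ into the definition \eqref{eqn:lowering_op} yields $\mathcal{A} = \tfrac{1}{\sqrt{2}}(Q_h\partial_x - \iu P_h x)$, so that
\begin{equation*}
\partial_x - \iu P_h Q_h^{-1}x \;=\; Q_h^{-1}\bigl(Q_h\partial_x - \iu P_h x\bigr) \;=\; \sqrt{2}\,Q_h^{-1}\mathcal{A}.
\end{equation*}
Because $Q_h$ depends only on $t$ (not on $x$), it commutes with $\mathcal{A}$; hence the $l$-fold composition factors cleanly as $(\sqrt{2}\,Q_h^{-1}\mathcal{A})^{l} = 2^{l/2}Q_h^{-l/2}\mathcal{A}^{l}$, which is the claimed right-hand side. (A straightforward induction on $l$ gives an equivalent derivation: base case $l=1$ is the product-rule identity above, and the step passes from $l$ to $l+1$ by setting $v=\eu^{A}\partial_x^{l}(u\eu^{-A})\in\mathcal{S}(\mathbb{R})$, noting $\eu^{A}\partial_x^{l+1}(u\eu^{-A})=\eu^{A}\partial_x(\eu^{-A}v)$, and applying the base case to $v$.)

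No substantive obstacle arises: the Schwartz assumption on $u$ ensures that every $\partial_x^{k}u$ and every $\mathcal{A}^{k}u$ is again Schwartz (since $\mathcal{A}$ is a first-order differential operator with polynomial coefficients), so all manipulations are justified pointwise, and no boundary or decay issue appears. The only care needed is to correctly apply the $p_h=q_h=0$ reduction inside \eqref{eqn:lowering_op} and to keep track of the scalar factors $\sqrt{2}$ and $Q_h^{-1}$ when matching the claimed normalization on the right-hand side.
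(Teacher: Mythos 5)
Your conjugation argument is essentially the paper's proof in a slicker package: the paper proceeds by induction on $l$, and the induction step is exactly the telescoping $\eu^{A}\partial_x^{l}\eu^{-A}=(\eu^{A}\partial_x\eu^{-A})^{l}$ that you invoke. Your reduction of the lowering operator, $\mathcal{A}=\tfrac{1}{\sqrt{2}}(Q_h\partial_x-\iu P_h x)$ for $\delta=1$ and $q_h=p_h=0$, is also correct, as is the observation that $Q_h$ is a time-dependent scalar and hence commutes with $\mathcal{A}$.

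However, your final line contains an algebra slip: from $\partial_x-\iu P_hQ_h^{-1}x=\sqrt{2}\,Q_h^{-1}\mathcal{A}$ the $l$-fold composition is $(\sqrt{2}\,Q_h^{-1}\mathcal{A})^{l}=2^{l/2}Q_h^{-l}\mathcal{A}^{l}$, \emph{not} $2^{l/2}Q_h^{-l/2}\mathcal{A}^{l}$; since $|Q_h|=1/\sqrt{2}\neq 1$ these genuinely differ. What your (correct) computation actually shows is that the exponent of $Q_h$ in \eqref{commutator} should be $-l$ rather than $-l/2$. The base case confirms this directly: $\eu^{A}\partial_x(u\eu^{-A})=\partial_x u-\iu P_hQ_h^{-1}xu=\sqrt{2}\,Q_h^{-1}\mathcal{A}u$, whereas $\sqrt{2}\,Q_h^{-1/2}\mathcal{A}u=Q_h^{1/2}\partial_x u-\iu Q_h^{-1/2}P_h xu$, which is a different function. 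The $Q_h^{-l}$ normalization is also the one consistent with the way the lemma is used in \eqref{eqn:step1}, where the prefactor $(|Q_h|/\sqrt{2})^{l}=2^{-l}$ is precisely the reciprocal modulus of $2^{l/2}Q_h^{-l}$. So keep your derivation, but state the conclusion as $2^{l/2}Q_h^{-l}\mathcal{A}^{l}u$ and flag the discrepancy with the printed statement, rather than forcing a correct computation into a mistyped normalization.
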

	\begin{proof}
		The proof is given by induction on $l$. For  $ l=1 $ the formula can be verified  by direct calculation. Now suppose the equality \eqref{commutator} holds when $ l= n-1 $. By substituting $ u $ for $ 2^{(n-1)/2}Q_h^{-(n-1)/2}\mathcal{A}^{n-1}u $ and again apply the above equality when $ l=1 $, we have:
		\begin{align*}
			\exp\left( \frac{\iu}{2}P_hQ_h^{-1}x^2 \right)&\partial_{x}\left\{ \exp\left(-\frac{\iu}{2}P_hQ_h^{-1}x^2\right)\exp\left(\frac{\iu}{2}P_hQ_h^{-1}x^2\right) \partial_{x}^{n-1}\left[ u\exp\left(-\frac{\iu}{2}P_hQ_h^{-1}x^2\right) \right] \right\} \\
			&\quad=\exp\left(\frac{\iu}{2}P_hQ_h^{-1}x^2\right)\partial_x^{n} \left[ u\exp\left(-\frac{\iu}{2}P_hQ_h^{-1}x^2\right) \right]\\
			&\quad= 2^{n/2}Q_{h}^{-n/2}\mathcal{A}\left(\mathcal{A}^{n-1}u\right).
		\end{align*}
		we can obtain the result for any positive integer $ l $.
	\end{proof}
	
	Note that \eqref{eqn:HWP_ini} and \Cref{rmk:variables} justify the special choice of $ p_h,q_h $. Now we can state and prove an approximation result related to the lowering operators:
	\begin{theorem}\label{thm:Lowering}
		For any $ u\in \mathcal{S}(\mathbb{R}), $ with $ 0\le l\le  m\le N+1 $, we have:
		\begin{equation}\label{thm2}
			\left\Vert \mathcal{A}^{l}(\hat{\Pi}_Nu-u) \right\Vert_{\mathbb{L}^{2}_{x}} \le \sqrt{\frac{(N-m+1)!}{(N-l+1)!}} \left\Vert \mathcal{A}^{m}u \right\Vert_{\mathbb{L}^{2}_{x}}.
		\end{equation}
		Here $ \hat{\Pi}_N $ is the projection operator onto the space spanned by the first $ N+1 $ Hagedorn's wave-packets
		\begin{equation}\label{eqn:projection_function}
			\hat{\Pi}_Nu \triangleq \exp\left(\frac{\iu P_hx^2}{2Q_h}\right)\Pi_N\left(u\exp\left(\frac{-iP_hx^2}{2Q_h}\right)\right) ,\quad\forall u\in\mathcal{S}(\mathbb{R}),
		\end{equation}
		and $ \mathcal{A} $ is the lowering operator given by \eqref{eqn:lowering_op} when $ \varepsilon = 1 $.
	\end{theorem}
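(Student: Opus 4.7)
The plan is to convert the estimate for $\mathcal{A}^l$ acting on $\hat{\Pi}_N u - u$ into an estimate for ordinary derivatives acting on a modified function in the weighted space $\mathbb{L}^2_{x,\omega}$, so that Theorem \ref{thm:Weighted_Estimate} can be applied almost directly. The bridge between the two settings is Lemma \ref{lem:Lowering_explicit}, which factors $\mathcal{A}^l$ as a Gaussian-conjugated derivative.

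First, rewrite Lemma \ref{lem:Lowering_explicit} in the form $\mathcal{A}^l u = 2^{-l/2} Q_h^{l/2}\, e^{(\iu/2)P_hQ_h^{-1}x^2}\,\partial_x^l\!\left[u\, e^{-(\iu/2)P_hQ_h^{-1}x^2}\right]$, and introduce the auxiliary function $v(x) \triangleq u(x)\,e^{-(\iu/2)P_hQ_h^{-1}x^2}$. The key observation about the projector is that its definition \eqref{eqn:projection_function} gives $(\hat{\Pi}_N u)\, e^{-(\iu/2)P_hQ_h^{-1}x^2} = \Pi_N v$, so that $(\hat{\Pi}_N u - u)\, e^{-(\iu/2)P_hQ_h^{-1}x^2} = \Pi_N v - v$. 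Applying the factorization with $u$ replaced by $\hat{\Pi}_N u - u$ therefore yields
\[
\mathcal{A}^l(\hat{\Pi}_N u - u) = 2^{-l/2}Q_h^{l/2}\, e^{(\iu/2)P_hQ_h^{-1}x^2}\,\partial_x^l(\Pi_N v - v).
\]

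Next, I would identify the modulus of the Gaussian factor with the weight $\omega$. Using the symplectic relation $\overline{Q}_hP_h - \overline{P}_hQ_h = 2\iu$, one computes $\operatorname{Im}(P_hQ_h^{-1}) = |Q_h|^{-2}$, and consequently $\bigl|e^{(\iu/2)P_hQ_h^{-1}x^2}\bigr|^2 = e^{-|Q_h|^{-2}x^2} = \omega$. Squaring and integrating the displayed identity thus gives
\[
\|\mathcal{A}^l(\hat{\Pi}_N u - u)\|^2_{\mathbb{L}^2_x} = 2^{-l}|Q_h|^{l}\,\|\partial_x^l(\Pi_N v - v)\|^2_{\mathbb{L}^2_{x,\omega}},
\]
and Theorem \ref{thm:Weighted_Estimate} bounds the right-hand side by $2^{-l}|Q_h|^{l}\cdot 2^{l-m}|Q_h|^{2(m-l)}\cdot\frac{(N-m+1)!}{(N-l+1)!}\|\partial_x^m v\|^2_{\mathbb{L}^2_{x,\omega}}$.

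Finally, I apply the same factorization identity in the reverse direction to $\partial_x^m v$: since $\partial_x^m v = 2^{m/2}Q_h^{-m/2}(\mathcal{A}^m u)\,e^{-(\iu/2)P_hQ_h^{-1}x^2}$ and $|e^{-(\iu/2)P_hQ_h^{-1}x^2}|^2 = \omega^{-1}$, we obtain $\|\partial_x^m v\|^2_{\mathbb{L}^2_{x,\omega}} = 2^m|Q_h|^{-m}\|\mathcal{A}^m u\|^2_{\mathbb{L}^2_x}$. Collecting the powers of $2$ and $|Q_h|$ yields
\[
\|\mathcal{A}^l(\hat{\Pi}_N u - u)\|^2_{\mathbb{L}^2_x} \le |Q_h|^{m-l}\,\frac{(N-m+1)!}{(N-l+1)!}\,\|\mathcal{A}^m u\|^2_{\mathbb{L}^2_x}.
\]
The closed-form solution \eqref{eqn:ODE_sol} gives $|Q_h|^2 = 1/2$, so $|Q_h|^{m-l}\le 1$ for $m\ge l$ and the claimed estimate follows after taking square roots.

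The routine computation poses no conceptual difficulty; the main bookkeeping hurdle is to verify that the Gaussian conjugation $e^{\pm(\iu/2)P_hQ_h^{-1}x^2}$ converts the unweighted $\mathbb{L}^2_x$ norm to exactly the weighted $\mathbb{L}^2_{x,\omega}$ norm of Theorem \ref{thm:Weighted_Estimate}, and that the accumulated factors of $Q_h$ and $2$ combine so that the resulting prefactor is bounded by $1$. Both points rely on the symplectic identity \eqref{sym_relation} together with the explicit form of $Q_h$ in one dimension, so once these are in place, the proof is a short chain of substitutions.
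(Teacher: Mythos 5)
Your proposal is correct and follows essentially the same route as the paper: conjugate by the Gaussian via Lemma \ref{lem:Lowering_explicit}, identify $\bigl|\exp(\tfrac{\iu}{2}P_hQ_h^{-1}x^2)\bigr|^2$ with the weight $\omega$, apply Theorem \ref{thm:Weighted_Estimate} to $\Pi_N v - v$, and undo the conjugation on $\partial_x^m v$. Your bookkeeping of the powers of $2$ and $|Q_h|$ is in fact slightly more careful than the paper's, leaving the harmless extra factor $|Q_h|^{(m-l)/2}\le 1$ that you correctly absorb using $|Q_h|^2 = 1/2$ from \eqref{eqn:ODE_sol}.
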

	\begin{proof}
		
		From the above lemma, we have
		\begin{align}\label{eqn:step1}
			\Vert \mathcal{A}^{l}(\hat{\Pi}_Nu - u) \Vert_{\mathbb{L}^{2}_{x}} = \left(\frac{|Q_h|}{\sqrt{2}}\right)^l\left\Vert \exp\left(\frac{\iu}{2}P_hQ_h^{-1}x^2\right)\partial_x^l\left[ (\hat{\Pi}_Nu - u)\exp\left(-\frac{\iu}{2}P_hQ_h^{-1}x^2\right) \right] \right\Vert_{\mathbb{L}^{2}_{x}}.
		\end{align}
		Furthermore, notice another equality, which can be found in \cite{Faou2009}.
		\begin{equation}
			\Im(P_hQ_h^{-1}) = (Q_hQ_h^{\ast})^{-1} = |Q_h|^{-2},
		\end{equation}
		that leads to
		\begin{equation}
			\omega =  \exp(-|Q_h|^{-2}x^2). 
		\end{equation}\par
		Together with \eqref{eqn:step1}, which tells us
		\begin{align*}
			\Vert \mathcal{A}^{l}(\hat{\Pi}_Nu - u) \Vert_{\mathbb{L}_x^2} 
			&=\left(\frac{|Q_h|}{\sqrt{2}}\right)^l \left\Vert \partial_x^l\left[ (\hat{\Pi}_Nu - u)\exp\left(-\frac{\iu}{2}P_hQ_h^{-1}x^2\right)\right] \right\Vert_{\mathbb{L}^{2}_{x,\omega}}\\
			&=\left(\frac{|Q_h|}{\sqrt{2}}\right)^l \left\Vert \partial_x^l\left\{ (\Pi_N-\operatorname{Id}) \left[u\exp\left(-\frac{\iu P_hx^2}{2Q_h}\right)\right]\right\} \right\Vert_{\mathbb{L}^{2}_{x,\omega}}.
		\end{align*}
		Now using Theorem \ref{thm:Weighted_Estimate}, we have 
		\begin{align*}
			\Vert \mathcal{A}^{l}(\hat{\Pi}_Nu - u) \Vert_{\mathbb{L}_x^2} &\le \left(\frac{|Q_h|}{\sqrt{2}}\right)^m \sqrt{\frac{(N-m+1)!}{(N-l+1)!}}\left\Vert \partial^{m}_{x}\left[ u\exp\left( -\frac{\iu P_hx^2}{2Q_h} \right) \right] \right\Vert_{\mathbb{L}^{2}_{x,\omega}}.
		\end{align*}
		Again by \Cref{lem:Lowering_explicit}
		\begin{align*}
			\Vert \mathcal{A}^{l}(\hat{\Pi}_Nu - u) \Vert_{\mathbb{L}_x^2} &\le \sqrt{\frac{(N-m+1)!}{(N-l+1)!}}\Vert \mathcal{A}^{m}u \Vert_{\mathbb{L}^2_x}. 
		\end{align*}
		Now the conclusion immediately follows.
	\end{proof}
	
	From \Cref{thm:Weighted_Estimate} and \Cref{thm:Lowering}, we can state and prove the approximation error bound after the GWPT
	\begin{theorem}\label{thm:Final_estimate}
		Let $ \mathcal{A} $ be the lowering operator defined by \eqref{eqn:lowering_op}. If $u \in \mathcal{S}(\mathbb{R}),k=0,\,1,\,\cdots,\,m$ and $ 3\le m\le N+1 $, then
		\begin{equation}\label{thm3}
			\Vert \partial_x^{l}(\hat{\Pi}_Nu-u) \Vert_{\mathbb{L}^2_x} \le C\sqrt{\frac{(N-m+1)!}{(N-l+1)!}}\Vert \mathcal{A}^{m}u \Vert_{\mathbb{L}^2_x},\quad l=0,1,2,3.
		\end{equation}
		where $ C $ is a positive constant independent of $ m,N $ and $ u $, but dependent on $ l $. Here $ \hat{\Pi}_N $ is the projection operator defined by \eqref{eqn:projection_function}.
	\end{theorem}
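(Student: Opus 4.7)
The plan is to exploit the close algebraic relationship between $\partial_x$ and the ladder operators $\mathcal{A},\mathcal{A}^{\dagger}$, and then reduce the desired estimate to the one already established in Theorem~\ref{thm:Lowering}. First, I would derive from the definitions \eqref{eqn:lowering_op}--\eqref{eqn:raising} (with $\delta=1$ and $p_h=q_h=0$) together with the symplectic relation \eqref{sym_relation} the three algebraic identities
\[
\partial_x=\frac{\iu}{\sqrt{2}}\bigl(\overline{P}_h\,\mathcal{A}+P_h\,\mathcal{A}^{\dagger}\bigr),\qquad [\mathcal{A},\mathcal{A}^{\dagger}]=1,\qquad [\mathcal{A},\partial_x]=\frac{\iu P_h}{\sqrt{2}}.
\]
Thanks to \eqref{eqn:ODE_sol}, the moduli $|Q_h|=1/\sqrt{2}$ and $|P_h|=\sqrt{2}$ are constant in time, so the scalars appearing above are uniformly bounded. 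From $[\mathcal{A},\mathcal{A}^{\dagger}]=1$ one obtains $\Vert\mathcal{A}^{\dagger}f\Vert_{\mathbb{L}^2_x}^{2}=\Vert\mathcal{A}f\Vert_{\mathbb{L}^2_x}^{2}+\Vert f\Vert_{\mathbb{L}^2_x}^{2}$ by integration by parts, which combined with the first identity yields the baseline inequality $\Vert\partial_x f\Vert_{\mathbb{L}^2_x}\le C\bigl(\Vert\mathcal{A}f\Vert_{\mathbb{L}^2_x}+\Vert f\Vert_{\mathbb{L}^2_x}\bigr)$ for every $f\in\mathcal{S}(\mathbb{R})$.

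Next, I would establish by induction on $l$ the Sobolev-type bound
\[
\Vert\partial_x^{l}f\Vert_{\mathbb{L}^2_x}\le C_{l}\sum_{k=0}^{l}\Vert\mathcal{A}^{k}f\Vert_{\mathbb{L}^2_x},\qquad l\ge 0,\;f\in\mathcal{S}(\mathbb{R}).
\]
The inductive step factors $\partial_x^{l+1}f=\partial_x(\partial_x^{l}f)$ and applies the baseline inequality; the resulting $\Vert\mathcal{A}\partial_x^{l}f\Vert_{\mathbb{L}^2_x}$ is handled by the iterated commutator identity $\mathcal{A}\partial_x^{l}=\partial_x^{l}\mathcal{A}+l\bigl(\iu P_h/\sqrt{2}\bigr)\partial_x^{l-1}$, which follows by induction from $[\mathcal{A},\partial_x]=\iu P_h/\sqrt{2}$. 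The fact that $[\mathcal{A},\partial_x]$ is a scalar is the clean algebraic ingredient that keeps the induction elementary.

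Finally, taking $f=v\equiv\hat{\Pi}_{N}u-u$ in the Sobolev-type bound and invoking Theorem~\ref{thm:Lowering} for each $\Vert\mathcal{A}^{k}v\Vert_{\mathbb{L}^2_x}$ (valid whenever $k\le l\le m\le N+1$) produces
\[
\Vert\partial_x^{l}v\Vert_{\mathbb{L}^2_x}\le C_{l}\sum_{k=0}^{l}\sqrt{\frac{(N-m+1)!}{(N-k+1)!}}\,\Vert\mathcal{A}^{m}u\Vert_{\mathbb{L}^2_x}.
\]
Since $(N-k+1)!\ge(N-l+1)!$ for $0\le k\le l$, each summand is dominated by the $k=l$ term, and the $l+1$ summands together contribute only an extra multiplicative factor absorbed into the constant, yielding \eqref{thm3}. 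The main technical point to verify carefully is the first step, namely the three algebraic identities and the uniform boundedness of their coefficients; everything afterwards is essentially bookkeeping. In fact this argument produces the bound for every $l\le m$, so the restriction $l\le 3$ in the statement appears to reflect what is needed downstream in analyzing the second-order $w$-equation rather than a genuine limitation of the method.
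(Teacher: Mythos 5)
Your proposal is correct, and it takes a genuinely different route from the paper's own proof. The paper works in the weighted Hermite framework: it writes $\partial_x(\hat{\Pi}_Nu-u)$ via the product rule as $\exp(-x^2)\partial_x[\Pi_N(u\eu^{x^2})-u\eu^{x^2}]-2x\exp(-x^2)[\Pi_N(u\eu^{x^2})-u\eu^{x^2}]$, controls the $2x$ term by the Hardy-type inequality $\Vert xu\Vert_{\mathbb{L}^2_{x,\omega}}^2\le \tfrac13\Vert u\Vert_{\mathbb{H}^1_{x,\omega}}^2$, invokes Theorem~\ref{thm:Weighted_Estimate}, and converts back through Lemma~\ref{lem:Lowering_explicit}; the cases $l=2,3$ are only asserted to follow ``in the same manner'' (they do, but the product-rule bookkeeping grows with $l$). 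You instead stay entirely at the level of the ladder algebra: the identities $\partial_x=\tfrac{\iu}{\sqrt{2}}(P_h\mathcal{A}^{\dagger}+\overline{P}_h\mathcal{A})$, $[\mathcal{A},\mathcal{A}^{\dagger}]=1$ and $[\mathcal{A},\partial_x]=\iu P_h/\sqrt{2}$ all check out (the first is the paper's own \eqref{eqn:pop}, the second follows from \eqref{sym_relation}, and $|Q_h|,|P_h|$ are constant by \eqref{eqn:ODE_sol}), the adjointness $\Vert\mathcal{A}^{\dagger}f\Vert^2=\Vert\mathcal{A}f\Vert^2+\Vert f\Vert^2$ is legitimate since $\mathcal{A}^{\dagger}$ is the $\mathbb{L}^2$-adjoint of $\mathcal{A}$, and because $v=\hat{\Pi}_Nu-u$ is Schwartz your Sobolev-type bound combined with Theorem~\ref{thm:Lowering} (applied for each $k\le l$, with $(N-k+1)!\ge(N-l+1)!$) closes the argument. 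Your approach buys a uniform treatment of all derivative orders $l\le m$ with an explicit induction in place of the paper's sketched higher-order cases, and it bypasses the weighted spaces entirely by reducing directly to Theorem~\ref{thm:Lowering}; the paper's approach is more self-contained in that it reuses the machinery already built for Theorems~\ref{thm:Weighted_Estimate} and~\ref{thm:Lowering}. Your closing remark is also accurate: the restriction $l\le 3$ reflects the $\mathbb{H}^3$ norm needed in Lemma~\ref{lem:V2_estimate} (the cubic Taylor remainder of the potential), not a limitation of either argument.
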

	\begin{proof}
		We will prove this theorem when $ Q_h, P_h $ satisfy the general formula \eqref{eqn:ODE_sol}. Here we notice a useful fact that $\omega = \exp(-2x^2)$.  \par
		When $ l=0 $, this reduces to the previous theorem.\par
		When $ l=1 $, we can prove a lemma similar to Formula (B.36b) in \cite{Shen2011} using integration by parts
		\begin{equation}\label{eqn:B.36b}
			\Vert xu \Vert_{\mathbb{L}^{2}_{x,\omega}}^2 \le\frac{|Q_h|^2}{2-|Q_h|^2}\Vert u \Vert_{\mathbb{H}^{1}_{x,\omega}}^2 = \frac{1}{3}\Vert u \Vert_{\mathbb{H}^{1}_{x,\omega}}^2.
		\end{equation}
		And by a direct calculation
		\begin{align*}
			\partial_x (\hat{\Pi}_Nu-u) =&{} \exp\left(-x^2\right)\partial_x\left[ \Pi_N\left(u\exp\left(x^2\right)\right) -u\exp\left(x^2\right) \right]\\
			&-2x\exp\left(-x^2\right)\left[ \Pi_N\left(u\exp\left(x^2\right)\right) -u\exp\left(x^2\right)\right].
		\end{align*}
		Hence by \eqref{eqn:B.36b}, we have
		\begin{align*}
			\Vert \partial_x(\hat{\Pi}_Nu-u) \Vert_{\mathbb{L}^2_{x}} &\le\left\Vert\partial_x\big[\Pi_N(u\exp(x^2)) -u\exp(x^2)\big]\right\Vert_{\mathbb{L}^{2}_{x,\omega}} + \left\Vert 2x\left[\Pi_N(u\exp(x^2)) -u\exp(x^2)\right]\right\Vert_{\mathbb{L}^{2}_{x,\omega}} \\
			&\le C\Vert \Pi_N(u\exp(x^2)) -u\exp(x^2) \Vert_{\mathbb{H}^{1}_{x,\omega}}\\
			&= C ( \left\Vert\partial_x\big[\Pi_N(u\exp(x^2)) -u\exp(x^2)\big]\right\Vert_{\mathbb{L}^{2}_{x,\omega}} + \left\Vert \left[\Pi_N(u\exp(x^2)) -u\exp(x^2)\right]\right\Vert_{\mathbb{L}^{2}_{x,\omega}} ).
		\end{align*}
		Where $ C $ is some positive constant independent of $ N $. \par 
		By \Cref{thm:Weighted_Estimate} and the explicit formula of $ Q_h $, we can estimate the two parts in RHS respectively. 
		\begin{align*}
			\Vert \partial_x(\hat{\Pi}_Nu-u) \Vert_{\mathbb{L}^2_{x}} &\le  \frac{C}{2^{(m-1)}}\sqrt{\frac{(N-m+1)!}{N!}}\Vert \partial^{m}_{x}(u\exp(x^2)) \Vert_{\mathbb{L}^{2}_{x,\omega}}\\
			&\quad + \frac{C}{2^{m}}\sqrt{\frac{(N-m+1)!}{(N+1)!}}\Vert \partial^{m}_{x}(u\exp(x^2)) \Vert_{\mathbb{L}^{2}_{x,\omega}} \\
			&\le \frac{3C}{2^{m}}\sqrt{\frac{(N-m+1)!}{N!}}\Vert \partial^{m}_{x}(u\exp(x^2)) \Vert_{\mathbb{L}^{2}_{x,\omega}}\\
			&= \frac{3C}{2^{m}}\sqrt{\frac{(N-m+1)!}{N!}}\Vert \exp(-x^2)\partial^{m}_{x}(u\exp(x^2)) \Vert_{\mathbb{L}_{x}^2}.
		\end{align*}\par 
		The third equality holds from definition. Then we apply \eqref{commutator} that states 
		$$ \exp(-x^2)\partial^{m}_{x}\left[ u\exp(x^2) \right] = 2^{m}\exp\bigg(- 2\iu m\int_{0}^{t}\alpha_{I}(s)ds \bigg)\mathcal{A}^{m}u. $$
		This completes the proof when $ l=1 $.
		\par
		The case when $ l=2,3 $ can be proved in the same manner in GWPT regime.
	\end{proof}
	
	Considering the argument of density, the error  estimates of the spectral interpolations as in \Cref{thm:Final_estimate} also hold  for functions in certain Sobolev spaces. Such estimates are crucial for the error analysis for solving the $w$ equation to be presented in the next section.
	\subsubsection{Dynamics for the truncated GWPT+HWP method}
	To present the final theorem, we first introduce some notation.
	Let $ w_N(\eta,t) $ be the solution of the equation:
	\begin{align*}
		i\partial_t w_N &= \hat{\Pi}_NH(t)w_N\\
		w_N(\eta,0) &= \hat{\Pi}_{N}w_0
	\end{align*}
	where $ H(t) $ is defined in the $ w $ equation. We have to give another lemma concerning the dynamics of HWP method. 
	\begin{lemma}\label{GWPT+HWPeqn}
		Let $ \mathcal{K} = \{0,1,2,\cdots ,N \} $ and suppose the GWPT parameters $( q,p,\gamma,\alpha_{R},\alpha_{I},B)$, HWP parameters $( Q_h,P_h )$ and the coefficients $( \{c_{k}\}_{k\in \mathcal{K}_1} )$ are solved exactly. Then the approximate solution $ \tilde{w} $ of $ w $ equation, where $ \tilde{w} $ is given by
		$$ \tilde{w}(\eta,t) = \sum_{k\in\mathcal{K}}c_k(t)\varphi^{1}_k[0,0,Q_h,P_h](\eta), $$
		satisfies
		\begin{equation}\label{trunc_GWPT+HWP}
			i\partial_t\tilde{w} = \hat{\Pi}_N\hat{H}(t)\tilde{w}. 
		\end{equation}
		Here $ \hat{\Pi}_N $ is the projection operator onto the space spanned by the first $ N+1 $ Hagedorn's wave-packets. $ \hat{H}(t) $ is the quantum Hamiltonian operator defined in \eqref{eqn:wEqn}.
		\begin{proof}
			We do this by direct calculation.
			\begin{align*}
				\iu\partial_t \tilde{w} &= \sum_{k=0}^{N}\iu\left(\partial_t c_{k}\right)\varphi_{k}^{1} + \sum_{k=0}^{N}c_{k}\iu\partial_t\varphi_{k}^{1}\\
				&= \sum_{k=0}^{N}\left(\sum_{l=0}^{N}f_{kl}c_l \right)\varphi_{k}^{1} + \sum_{k=0}^{N}c_k\hat{H}_q\varphi^{1}_{k}\\
				&= \sum_{k=0}^{N}\left(\sum_{l=0}^{N}f_{kl}c_l \right)\varphi^{1}_{k} + \sum_{k=0}^{N}c_k(2k+1)\alpha_{I}\varphi^{1}_{k}.
			\end{align*}
			The last equality holds since Hagedorn wave-packets are the eigenfunctions of the quadratic Hamiltonian $ \hat{H}_q $ whose corresponding eigenvalues are $ \alpha_{I}(2k+1) $ (see Remark 2.4 in \cite{Hagedorn1998}).
		\end{proof}
	\end{lemma}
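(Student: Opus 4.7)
The plan is to verify \eqref{trunc_GWPT+HWP} by a direct computation, differentiating $\tilde{w}$ in time and matching both sides term by term. First, I would apply the product rule to write
\begin{equation*}
\iu\partial_t\tilde{w} = \sum_{k=0}^{N}(\iu\dot{c}_k)\,\varphi_k^{1}[0,0,Q_h(t),P_h(t)] + \sum_{k=0}^{N}c_k\,\iu\partial_t\varphi_k^{1}[0,0,Q_h(t),P_h(t)].
\end{equation*}
The first sum is handled by substituting the Galerkin ODE \eqref{eqn:Galerkin_dynamics}, which was assumed to be solved exactly, yielding $\sqrt{\varepsilon}\sum_{k,l}f_{kl}c_l\,\varphi_k^{1}$. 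For the second sum, I would invoke the foundational result of \cite{Hagedorn1998} together with the HWP evolution equations \eqref{eqn:HWP_eqn}: when $Q_h,P_h$ evolve according to those equations, each $\varphi_k^{1}$ is carried along by the quadratic Hamiltonian flow, so $\iu\partial_t\varphi_k^{1} = \hat{H}_q(t)\varphi_k^{1}$. In 1D with $q_h=p_h=0$ the Hagedorn packets are simultaneously eigenfunctions of $\hat{H}_q(t)$ with eigenvalue $(2k+1)\alpha_I$, which provides the cleanest form.

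The remaining step is to recognize the resulting expression as $\hat{\Pi}_N\hat{H}(t)\tilde{w}$. Splitting $\hat{H}(t) = \hat{H}_q(t) + \sqrt{\varepsilon}\,U(\eta;q)$, I would argue the two pieces separately. Since $\tilde{w}\in\mathrm{span}\{\varphi_0^{1},\dots,\varphi_N^{1}\}$ and this subspace is invariant under $\hat{H}_q(t)$ (by the eigenfunction property just recalled), the projection acts as the identity: $\hat{\Pi}_N\hat{H}_q(t)\tilde{w}=\hat{H}_q(t)\tilde{w}$, matching the contribution from $\partial_t\varphi_k^{1}$. For the perturbation, expanding $\hat{\Pi}_N(\sqrt{\varepsilon}\,U\tilde{w}) = \sqrt{\varepsilon}\sum_k\langle U\tilde{w},\varphi_k^{1}\rangle\varphi_k^{1}$ and inserting the expansion $\tilde{w}=\sum_l c_l\varphi_l^{1}$ reproduces $\sqrt{\varepsilon}\sum_{k,l}f_{kl}c_l\,\varphi_k^{1}$ by the very definition \eqref{eqn:Fmatrix} of the Galerkin matrix. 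This matches the $\dot c_k$ contribution, completing the identification.

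The main obstacle, and the only conceptual content, is confirming the invariance of $\mathrm{span}\{\varphi_0^{1},\dots,\varphi_N^{1}\}$ under $\hat{H}_q(t)$ so that $\hat{\Pi}_N\hat{H}_q(t)$ reduces to $\hat{H}_q(t)$ on $\tilde{w}$. In the 1D setting this is immediate from the eigenfunction structure, but conceptually one should note that it is precisely this structural compatibility between the basis and the quadratic part of the Hamiltonian that justifies treating only the $\mathcal{O}(\sqrt{\varepsilon})$ perturbation variationally; without it, an extra Galerkin error for the quadratic part would appear. Everything else in the argument is bookkeeping, and the lemma then follows by comparing the two displays.
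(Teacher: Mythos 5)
Your proposal is correct and follows essentially the same route as the paper's proof: product rule, substitution of the exact Galerkin ODE for $\dot{c}_k$, and the eigenfunction property $\hat{H}_q\varphi_k^{1}=(2k+1)\alpha_I\varphi_k^{1}$ of the Hagedorn packets. You in fact go one step further than the paper by explicitly verifying that the resulting expression equals $\hat{\Pi}_N\hat{H}(t)\tilde{w}$ (via the invariance of $\mathrm{span}\{\varphi_0^{1},\dots,\varphi_N^{1}\}$ under $\hat{H}_q$ and the definition \eqref{eqn:Fmatrix} of $f_{kl}$), an identification the paper leaves implicit.
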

	
	Now we can prove the spectral convergence of the GWPT+HWP method at given time $ T $:
	\begin{theorem}\label{thm:approxi}
		Let $ w=w(t) $ be the solution of the equation \eqref{eqn:wEqn} and $ \hat{\Pi}_Nw(t)-w(t) \in \mathbb{H}^{3}_{x}(\mathbb{R}) $ up to time $ T $. Suppose $ \mathcal{A}^{m}w(t)\in \mathbb{L}^{2}_{x}(\mathbb{R}) $, $ V^{(3)}\in \mathbb{L}_{x}^{\infty}(\mathbb{R}) $ and the stability condition:
		\begin{equation}\label{eqn:stability}
			\Vert \mathcal{A}^{m}w(t) \Vert_{\mathbb{L}^2_{\eta}} \le B(m,T)\Vert \mathcal{A}^{m}w(0) \Vert_{\mathbb{L}^2_{\eta}} \quad \text{for $ t\in[0,T] $.}
		\end{equation}
		Then there exist constants $ B_1(m,T),B_2(m,T) $ independent of $ w_0 $ such that
		\begin{equation}\label{eqn:thm_appro}
			\Vert w(t)-w_N(t) \Vert_{\mathbb{L}_{\eta}^2} \le B_1(m,T)\sqrt{\frac{(N-m+1)!}{(N+1)!}}\Vert \mathcal{A}^m w(0) \Vert_{\mathbb{L}_{\eta}^2} + B_2(m,T)T\frac{\varepsilon^{1/2}}{\tilde{\alpha}_I^{3/2}} \sqrt{\frac{(N-m+1)!}{(N-2)!}} \Vert \mathcal{A}^{m}w(0) \Vert_{\mathbb{L}_{\eta}^2}.
		\end{equation}
		Here $ \tilde{\alpha}_I = \min_{0\le t \le T}\alpha_{I} $ and $ V^{(3)} $ is the third derivative of the potential function.
	\end{theorem}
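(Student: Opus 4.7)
My plan is to adapt the Galerkin-error framework of \cite{Pasciak1980}, exploiting two algebraic facts specific to GWPT+HWP. First, since the Hagedorn wave-packets are eigenfunctions of $\hat{H}_q(t)$ at every time (as used in the proof of Lemma \ref{GWPT+HWPeqn}), the quadratic Hamiltonian preserves the range of $\hat{\Pi}_N$, hence $(I-\hat{\Pi}_N)\hat{H}_q(t)v = 0$ whenever $v$ lies in that range. Second, Lemma \ref{GWPT+HWPeqn} gives $i\partial_t w_N = \hat{\Pi}_N\hat{H}(t)w_N$. Subtracting this from $i\partial_t w = (\hat{H}_q + \sqrt{\varepsilon}U)w$ and using the first fact to annihilate the quadratic contribution to $(I-\hat{\Pi}_N)\hat{H}w_N$, the error equation simplifies to
$$
i\partial_t(w-w_N) = \hat{H}(t)(w-w_N) + \sqrt{\varepsilon}(I-\hat{\Pi}_N)U(t)w_N.
$$

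Self-adjointness of $\hat{H}(t)$ kills the first right-hand side in the standard $\mathbb{L}^2_\eta$ energy identity, producing
$$
\frac{d}{dt}\Vert w-w_N\Vert_{\mathbb{L}^2_\eta} \le \sqrt{\varepsilon}\Vert(I-\hat{\Pi}_N)U(t)w_N\Vert_{\mathbb{L}^2_\eta}.
$$
Integrating over $[0,t]$ and bounding the initial error $\Vert w(0)-w_N(0)\Vert_{\mathbb{L}^2_\eta} = \Vert(I-\hat{\Pi}_N)w(0)\Vert_{\mathbb{L}^2_\eta}$ via Theorem \ref{thm:Final_estimate} with $l=0$ gives the first summand of \eqref{eqn:thm_appro}. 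The rest of the theorem reduces to a uniform-in-time bound
$$
\Vert(I-\hat{\Pi}_N)U(t)w_N\Vert_{\mathbb{L}^2_\eta} \le C\tilde{\alpha}_I^{-3/2}\sqrt{(N-m+1)!/(N-2)!}\,\Vert\mathcal{A}^m w(t)\Vert_{\mathbb{L}^2_\eta}.
$$

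To establish this bound I would split $Uw_N = Uw - U(w-w_N)$. On the main piece, Taylor's theorem together with $V^{(3)}\in\mathbb{L}^\infty$ writes $U(\eta;q)$ as a cubic in $B^{-1}\eta$ plus higher-order corrections, and the identity $\alpha_I = B^TB$ gives $\Vert B^{-1}\Vert^3\le\tilde{\alpha}_I^{-3/2}$. Next the three-term recurrence \eqref{eqn:Three_rec}, together with the bounded modulus of $Q_h$ from the explicit formula \eqref{eqn:ODE_sol}, rewrites multiplication by the cubic in $\eta$ as a polynomial of total degree three in $\mathcal{A}$ and $\mathcal{A}^\dagger$. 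Theorem \ref{thm:Final_estimate} with $l=3$ --- applicable because $\hat{\Pi}_N w-w\in\mathbb{H}^3_x$ is assumed --- then delivers the asymptotic factor $\sqrt{(N-m+1)!/(N-2)!}\,\Vert\mathcal{A}^m w\Vert_{\mathbb{L}^2_\eta}$. The residual $U(w-w_N)$ term is folded back into the bound through a Grönwall loop closed by the same approximation estimate, contributing only to $B_2(m,T)$. Finally the stability hypothesis \eqref{eqn:stability} converts $\Vert\mathcal{A}^m w(s)\Vert$ to $B(m,T)\Vert\mathcal{A}^m w(0)\Vert$, and integrating in $s$ produces the factor $T$ in the second summand of \eqref{eqn:thm_appro}.

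The main technical obstacle I expect is the third step: converting $U$ into raising/lowering-operator language in such a way that Theorem \ref{thm:Final_estimate} with $l=3$ applies with sharp constants, and handling the quartic and higher-order Taylor remainders so they do not degrade the asymptotic factor $\sqrt{(N-m+1)!/(N-2)!}$ when fed through Grönwall. The choice $l=3$ is precisely what links the cubic scale of the perturbation $\sqrt{\varepsilon}U$ to the regularity assumption $\hat{\Pi}_N w-w\in\mathbb{H}^3_x$ stated in the theorem, and the $B^{-1}$ bound is what produces the $\tilde{\alpha}_I^{-3/2}$ prefactor.
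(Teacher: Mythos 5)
Your overall architecture is close to the paper's (energy identity, Gr\"onwall, the $l=0$ and $l=3$ instances of the approximation theorem, the $\tilde\alpha_I^{-3/2}$ factor from the integral-remainder Taylor form of $U$), and your error equation $\iu\partial_t(w-w_N)=\hat H(w-w_N)+\sqrt{\varepsilon}(I-\hat\Pi_N)Uw_N$ is correct, since $\hat H_q$ does preserve the span of the first $N+1$ wave-packets. The gap lies in the residual you are left to estimate. The paper does not compare $w_N$ with $w$ directly: following Pasciak it introduces $X=\hat\Pi_N w$ and estimates $e=w_N-X$, so that after testing with $\theta=e$ and using self-adjointness of $\hat\Pi_N$ the residual is $\langle U(\hat\Pi_N w-w),e\rangle$, i.e.\ the multiplication operator $U$ applied to the projection error of the \emph{exact} solution. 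That term needs only the pointwise bound $|U|\le C\alpha_I^{-3/2}|\eta|^3$ (Lemma \ref{lem:V2_estimate}, which uses nothing beyond $V^{(3)}\in\mathbb{L}^\infty$), after which Theorem \ref{thm:Final_estimate} with $l\le 3$ applied to $w$ itself yields the rate $\sqrt{(N-m+1)!/(N-2)!}$. Your route instead leaves you with $(I-\hat\Pi_N)(Uw)$ --- the Hermite tail of the \emph{product} $Uw$, not $U$ times the tail of $w$. The projection does not commute with multiplication by $U$, and your proposed mechanism for handling this (rewrite $U$ via the three-term recurrence as a degree-three polynomial in $\mathcal{A},\mathcal{A}^\dagger$) is not available under the stated hypotheses: with only $V^{(3)}\in\mathbb{L}^\infty$, one has $U=\alpha_I^{-3/2}\tfrac{\eta^3}{2}\int_0^1(1-\theta)^2V^{(3)}\bigl(q+\theta\eta\sqrt{\varepsilon/\alpha_I}\bigr)\,d\theta$, which is $\eta^3$ times a merely bounded, $\eta$-dependent factor, not a polynomial; hence $(I-\hat\Pi_N)U\hat\Pi_{N-3}\neq 0$ and there is no finite-degree ladder-operator representation. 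Bounding $(I-\hat\Pi_N)(Uw)$ through Theorem \ref{thm:Lowering} would instead require controlling $\mathcal{A}^m(Uw)$, hence $m$ further derivatives of $V$, which the theorem does not assume.

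A second, smaller gap: the Gr\"onwall loop you propose for the piece $U(w-w_N)$ does not close in $\mathbb{L}^2_\eta$, because $U$ is an unbounded multiplication operator growing like $|\eta|^3$, so $\Vert U(w-w_N)\Vert_{\mathbb{L}^2_\eta}$ is not controlled by $\Vert w-w_N\Vert_{\mathbb{L}^2_\eta}$; you would have to propagate weighted norms of the error alongside, with the attendant commutators. Both difficulties disappear once you compare $w_N$ with $\hat\Pi_N w$ rather than with $w$ (adding back $\Vert w-\hat\Pi_N w\Vert_{\mathbb{L}^2_\eta}$ at the end via Theorem \ref{thm:Final_estimate} with $l=0$, which produces the first summand of \eqref{eqn:thm_appro}); with that single change the rest of your outline goes through essentially as in the paper.
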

	
	\begin{proof}
		The idea is simply an extension of a technique introduced in the proof in Theorem 2 in \cite{Pasciak1980}. Let $ X(t)=\hat{\Pi}_Nw(t) $ and $ v = w(t)-X(t) $, where $ \hat{\Pi}_N $ is a projection operator that depends on time. We first calculate the time derivative of $ X(t) $. Here we abbreviate the operator $ \hat{H}(t),\hat{H}_q(t) $ as $ \hat{H},\hat{H}_q $.
		\begin{align*}
			\partial_t(\hat{\Pi}_N(t)w(t)) &= \partial_t \left( \sum_{k=0}^{N}\langle w,\varphi^{1}_{k}\rangle\varphi_{k}^{1} \right)\\
			&= \sum_{k=0}^{N} \langle \frac{1}{\iu}\hat{H} w,\varphi_{k}^{1} \rangle\varphi_{k}^{1}+ \sum_{k=0}^{N}\langle w, \frac{1}{\iu}\hat{H}_q\varphi_{k}^{1}\rangle\varphi_{k}^{1} + \sum_{k=0}^{N}\langle w, \varphi_{k}^{1}\rangle \frac{1}{\iu}\hat{H}_q\varphi_{k}^{1}\\
			&= \sum_{k=0}^{N} \langle \frac{1}{\iu}\hat{H} w,\varphi_{k}^{1} \rangle\varphi_{k}^{1}- \sum_{k=0}^{N}\langle\frac{1}{\iu}\hat{H}_q w, \varphi_{k}^{1}\rangle\varphi_{k}^{1} + \sum_{k=0}^{N}\langle w, \varphi_{k}^{1}\rangle \frac{1}{\iu}\hat{H}_q\varphi_{k}^{1}.
		\end{align*}
		If $ \theta\in S^{1}_N(t) $, where $ S^{1}_N(t) $ is the linear space spanned by the first $ N+1 $ Hagedorn's wave-packets $ \{\varphi_{k}\}_{k=0}^{N} $, we then have
		\begin{align*}
			\langle\partial_t(\hat{\Pi}_Nw(t)),\theta\rangle &=\langle \frac{1}{\iu}\hat{\Pi}_N\hat{H}w,\theta\rangle  + \langle \frac{1}{\iu}(\hat{H}_q\hat{\Pi}_N-\hat{\Pi}_N\hat{H}_q)w, \theta\rangle,
		\end{align*}
		that leads to
		\begin{equation}\label{consistency}
			\langle \partial_t (\hat{\Pi}_Nw(t)),\theta \rangle - \langle\frac{1}{\iu}\hat{H}\hat{\Pi}_Nw(t) ,\theta\rangle = \langle \frac{1}{\iu\varepsilon}\left( \hat{\Pi}_NU-U\hat{\Pi}_{N} \right)w(t),\theta\rangle.
		\end{equation}
		Here $ U $ denotes the non-quadratic part of the potential function defined in \eqref{eqn:potential}. Further by Lemma \ref{GWPT+HWPeqn}, we have
		\begin{equation}\label{consistency2}
			\langle \partial_t w_N(t),\theta \rangle - \langle\frac{1}{\iu}\hat{H}w_N(t) ,\theta\rangle = 0.
		\end{equation}
		Let $ e=w_N(t)-X(t)\in S_{N}(t) $, we have, by subtracting \eqref{consistency} from \eqref{consistency2} and setting $ \theta = e $: 
		\begin{align}\label{eqn:consistency_A}
			\langle\partial_t e,e
			\rangle-\langle \frac{1}{\iu}\hat{H}e, e \rangle &= \langle \frac{\sqrt{\varepsilon}}{\iu}\left( -\hat{\Pi}_NU+U\hat{\Pi}_{N} \right)w ,e\rangle
			= \langle\frac{\sqrt{\varepsilon}}{\iu} (UX - U w) ,e\rangle.
		\end{align}
		The last equality holds by the self-adjointness of the operator $ \hat{\Pi}_N $. In the same manner, we have:
		\begin{align}\label{eqn:consistency_B}
			\langle e,\partial_t e
			\rangle-\langle e ,\frac{1}{\iu}\hat{H}e \rangle = \langle e,\frac{\sqrt{\varepsilon}}{\iu}( UX - U w) \rangle.
		\end{align}
		Thus we have, adding the above two equalities \eqref{eqn:consistency_A}, \eqref{eqn:consistency_B} and notice that $ -\iu\hat{H} $ is skew-symmetric, we have
		\begin{align*}
			\partial_t\Vert e \Vert^2_{\mathbb{L}_{\eta}^2} =2\Re\langle \frac{\sqrt{\varepsilon}}{\iu} \left(UX - U w\right) ,e\rangle \le 2\sqrt{\varepsilon}\Vert e\Vert_{\mathbb{L}^2_{\eta}} \Vert U(\hat{\Pi}_Nw-w) \Vert_{\mathbb{L}_{\eta}^2}.
		\end{align*}
		We will give a lemma to prove the boundedness of $ U $, 
		\begin{lemma}\label{lem:V2_estimate}
			Let $ \hat{\Pi}_Nw-w \in \mathbb{H}^{3}(\mathbb{R}) $ up to time $ T $. And we suppose $ V^{(3)}\in \mathbb{L}^{\infty}(\mathbb{R}) $, then we have the estimate:
			\begin{equation}\label{V2estimate}
				\left\Vert U (\hat{\Pi}_Nw-w) \right\Vert_{\mathbb{L}_{\eta}^2} \le C_1\alpha_I^{-3/2}\left\Vert  \Pi_Nw\exp(\eta^2)-w\exp(\eta^2) \right\Vert_{\mathbb{H}^{3}_{\eta,\omega}}.
			\end{equation}
		\end{lemma}
		
		\begin{proof}
			We notice the Taylor expansion with integral remainder
			\begin{equation}\label{Integral_remainder}
				U = \alpha_I^{-3/2}\frac{\eta^3}{2!}\int_{0}^{1}(1-\theta)^2V^{(3)}\left(q+\theta\eta\sqrt{\frac{\varepsilon}{\alpha_{I}}}\right)d\theta.
			\end{equation}
			So we have, by the assumption that $ V^{(3)}\in\mathbb{L}^{\infty} $
			\begin{align*}
				\Vert U (\hat{\Pi}_Nw-w) \Vert_{\mathbb{L}_{\eta}^2} &\le C_1\alpha_I^{-3/2} \Vert \eta^3(\hat{\Pi}_Nw-w) \Vert_{\mathbb{L}_{\eta}^2}\\
				&= C_1\alpha_I^{-3/2} \Vert \eta^3(\Pi_{N}-\operatorname{Id})w\exp(\eta^2) \Vert_{\mathbb{L}^{2}_{\eta,\omega}}.
			\end{align*}
			where $ C_1 $ is a constant that only depends on the potential function $ V $. To give a bound for the RHS, we can apply the technique applied in \Cref{thm:Final_estimate}. Thus we have
			$$ \Vert \eta^3 (\Pi_Nw\exp(\eta^2)-w\exp(\eta^2)) \Vert_{\mathbb{L}^{2}_{\eta,\omega}} \le C_2\Vert \Pi_Nw\exp(\eta^2)-w\exp(\eta^2) \Vert_{\mathbb{H}^{3}_{\eta,\omega}}. $$
			So we have finished the proof of the lemma.
		\end{proof}
		
		Now, combining \Cref{thm:Lowering}, we can conclude that:
		\begin{align*}
			\partial_t \Vert e \Vert_{\mathbb{L}_{\eta}^2} &\le C_3 \frac{\varepsilon^{1/2}}{\alpha_I^{3/2}} \Vert \Pi_{N}w\exp(\eta^2) - w\exp(\eta^2) \Vert_{\mathbb{H}^{3}_{\eta,\omega}}\\
			&\le C_3 \frac{\varepsilon^{1/2}}{\alpha_I^{3/2}} \sum_{l=0}^{3}\Vert  \mathcal{A}^{l}(\hat{\Pi}_Nw-w)  \Vert_{\mathbb{L}_{\eta}^2} \\
			&\le  C_3\frac{\varepsilon^{1/2}}{\alpha_I^{3/2}}\sum_{l=0}^{3}\sqrt{\frac{(N-m+1)!}{(N-l+1)!}} \Vert \mathcal{A}^{m}w \Vert_{\mathbb{L}^2_{\eta}}. 
		\end{align*}
		By the stability condition \eqref{eqn:stability}, we have
		\begin{align*}
			\partial_t \Vert e \Vert_{\mathbb{L}^2_{\eta}}&\le C_4\frac{\varepsilon^{1/2}}{\alpha_I^{3/2}} \sqrt{\frac{(N-m+1)!}{(N-2)!}}\Vert \mathcal{A}^{m}w \Vert_{\mathbb{L}^2_{\eta}} \\
			&\le C_5(m,T)\frac{\varepsilon^{1/2}}{\hat{\alpha}_{I}^{3/2}} \sqrt{\frac{(N-m+1)!}{(N-2)!}} \Vert \mathcal{A}^{m}w(0) \Vert_{\mathbb{L}^2_{\eta}}.
		\end{align*}
		Here $ \hat{\alpha}_I = \min_{0\le t \le T}\alpha_{I} $ and $ C_5(m,T) $ is a constant that depends on $ m $ and $ T $. The above inequality infers
		\begin{equation}\label{eqn:err_e}
			\Vert e \Vert_{\mathbb{L}^2_{\eta}} \le C_5(m,T)T\frac{\varepsilon^{1/2}}{\hat{\alpha}_{I}^{3/2}} \sqrt{\frac{(N-m+1)!}{(N-2)!}} \Vert \mathcal{A}^{m}w(0) \Vert_{\mathbb{L}^2_{\eta}}.
		\end{equation}
		From \Cref{thm:Final_estimate}, we have:
		\begin{align*}
			\Vert w-w_N \Vert_{\mathbb{L}_{\eta}^2} &\le \Vert w-\hat{\Pi}_{N}w \Vert_{\mathbb{L}^2_{\eta}} + \Vert \hat{\Pi}_{N}w-w_N \Vert_{\mathbb{L}^2_{\eta}}\\
			& \le C_6\sqrt{\frac{(N-m+1)!}{(N+1)!}}\Vert \mathcal{A}^m w(t) \Vert_{\mathbb{L}_{\eta}^2} + e(t). 
		\end{align*}
		Again by the stability condition \eqref{eqn:stability} and \eqref{eqn:err_e},
		\begin{align*}
			\Vert w-w_N \Vert_{\mathbb{L}_{\eta}^2} \le C_7(m,T)\sqrt{\frac{(N-m+1)!}{(N+1)!}}\Vert \mathcal{A}^m w(0) \Vert_{\mathbb{L}_{\eta}^2} + C_5(m,T)T\frac{\varepsilon^{1/2}}{\tilde{\alpha}_I^{3/2}} \sqrt{\frac{(N-m+1)!}{(N-2)!}} \Vert \mathcal{A}^{m}w(0) \Vert_{\mathbb{L}^2_{\eta}}.
		\end{align*}
		Now we have finished the proof of \Cref{thm:approxi}.
	\end{proof}
	\section{Numerical Tests}\label{sec:Numerical_Test}
	In the following section, we will perform several numerical tests for 1d and 2d cases to provide immediate evidence of the effectiveness of the method, and to verify the numerical analysis.
	\subsection{Example 1, 1d case}
	In this example \citep[see][]{Russo2013}, we choose $ d=1 $ and potential $ V(x) = 1-\cos(x) $. The initial value is chosen to be
	$$ \psi_{1d}(x,0) = \left( \frac{2}{\pi\varepsilon} \right)^{1/4}\exp\left[ -\frac{(x-\pi/2)^2}{\varepsilon} \right]. $$
	where $ q_0=\pi/2,p_0 = 0,\alpha_0 = \iu,\gamma_0=0,B_0=1,Q_{h,0} = 1/\sqrt{2},P_{h,0} = \iu\sqrt{2} $. Unless otherwise specified, the reference solution is obtained by the Fourier spectral method applied to the Schr\"odinger equation (\ref{eqn:SEqn}), using the SP4 method proposed in \cite{Chin2001}, with uniform grid in $ [-\pi,\pi) $ with space step $ \Delta x_{REF} = 2\pi\varepsilon/64 $ and time step $ \Delta t_{REF}=\varepsilon/64 $. \par   
	First, we observe that our algorithm can resolve the typical three scales (see Section \ref{subsec:review_GWPT}) as shown in Figure \ref{fig:scale_resolving}.
	We can see that, at variance with the original wave function $\psi$, the wave function $ w $ is not oscillatory, neither in space nor in time. Indeed we find out that it requires much fewer grid  points on the computational domain, and a much larger  time step $\Delta t_c$, independently on  the small scale $\varepsilon$, still reaching high accuracy.
	\begin{figure}[t!]
		\centering
		\includegraphics[scale=0.5]{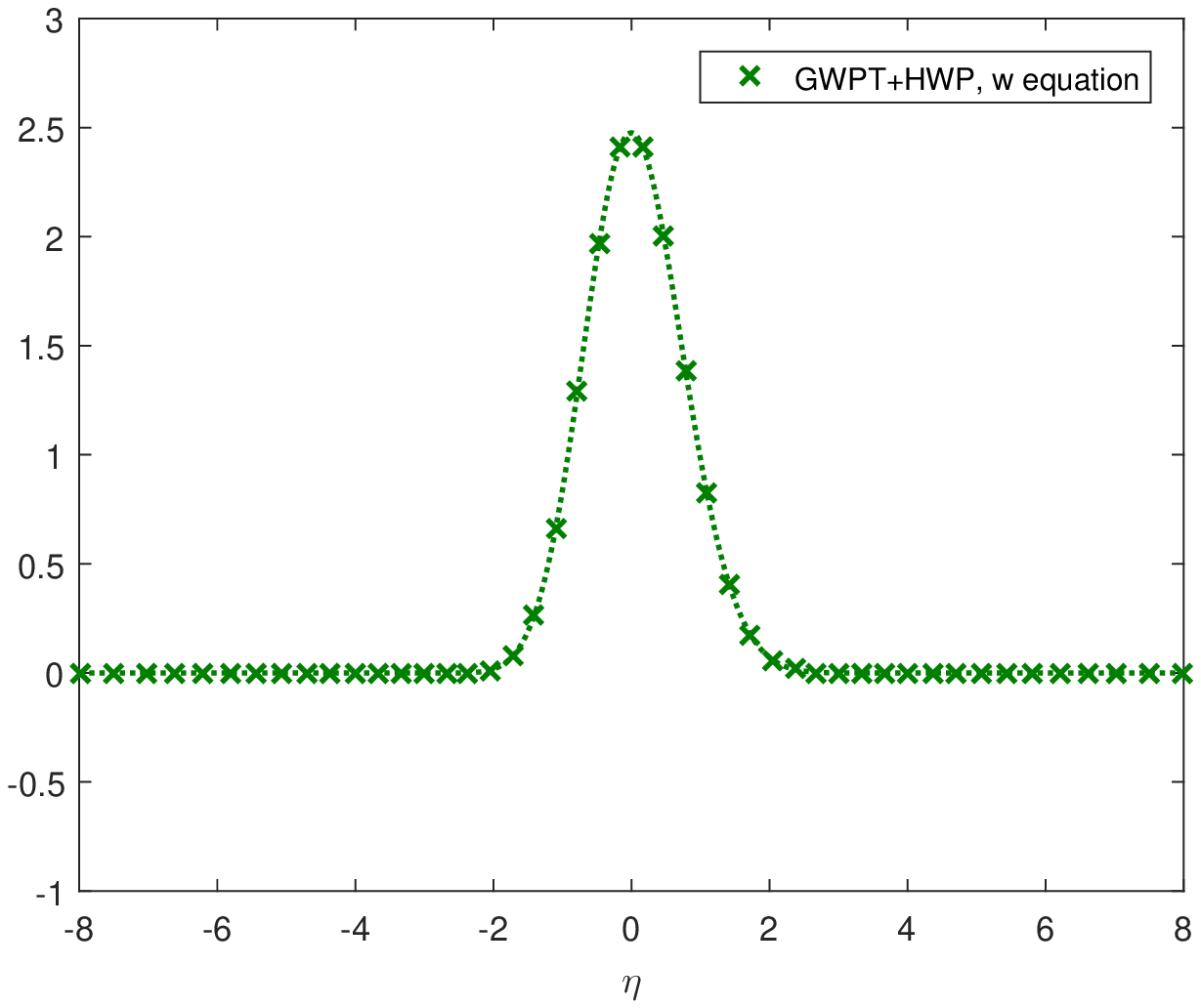}
		\hspace{-0.5cm}
		\includegraphics[scale=0.5]{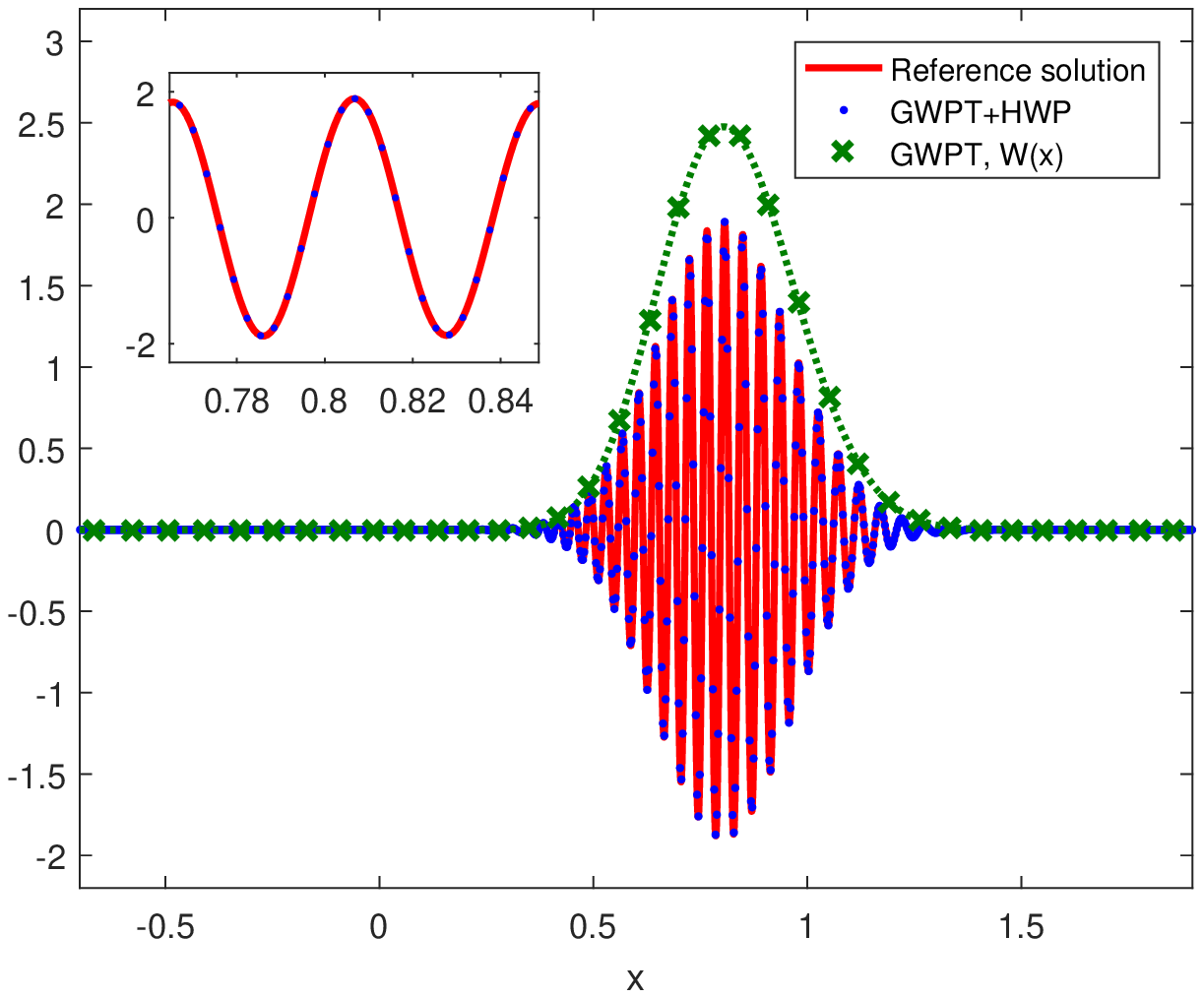}
		\caption{We choose $ \varepsilon= 1/128$, 30 wave-packets, time step $ \Delta t_c=1/64 $, $ \Delta t_{gt} = 1/1024 $ to compute the solution at $ t_f=1.25 $. 50-point Gauss-Hermite quadrature rule is used to evaluate the Galerkin matrix. Left panel: the numerical solution of the real part of $ w $ equation \eqref{eqn:wEqn} on quadrature points is plotted in $ \eta $ space at final time. The cross represent the computed solution on the grid point, while the dotted line represents the computed solution on $ \{-\pi + 2\pi (k-1)/2048\}_{k=1}^{2048} $. Right panel: the real part of the reconstructed wave function and the real part of the reference solution are plotted. The $ W $ function is also plotted on quadrature points transformed to the real space. The dark green dotted line represent the computed solution of $ W $ on $ \{-\pi + 2\pi (k-1)/2048\}_{k=1}^{2048}  $. The wave function on $ \{-\pi + 2\pi (k-1)/2048\}_{k=1}^{2048} $ is also reconstructed and is represented by blue dots. It is clear that our method requires fewer points in real space. Reference solution for $\psi$: SP4 on $ (-\pi,\pi) $ with $ \Delta t_{REF}= \varepsilon/64 $ and spatial grid size $ \Delta x_{REF} = 2\pi\varepsilon/64 $.}
		\label{fig:scale_resolving}
	\end{figure}\par
	Second, to determine how one chooses time step $ \Delta t_{c} $ and $ \Delta t_{gt} $ to achieve best approximation, we plot the $ \mathbb{L}^2 $ error for various time step $ \Delta t_{c} $ and $ \Delta t_{gt} $ with $ 30 $ wave-packets. The result is shown in Figure \ref{fig:Delta_t}. In left panel, for fixed $ \Delta t_{c} =1/128 $, we test various values of $ \Delta t_{gt} $ at final time $ t_f = 0.125 $. This numerical test verifies the fourth order convergence at finite time and in turn verifies the numerical analysis given by \eqref{eqn:GWPT_estimate}. In the right panel, we fix $ \Delta t_{gt} = 1/2048 $ and compare the $ \mathbb{L}^2 $ error for different values of $ \Delta t_c $ at final time $ t_f = 0.125 $. It is shown that when $ \Delta t_{gt} $ is small enough, $ \Delta t_c $ can be chosen independent of $ \varepsilon $.  
	\begin{figure}[htbp]
		\centering
		\includegraphics[scale=0.5]{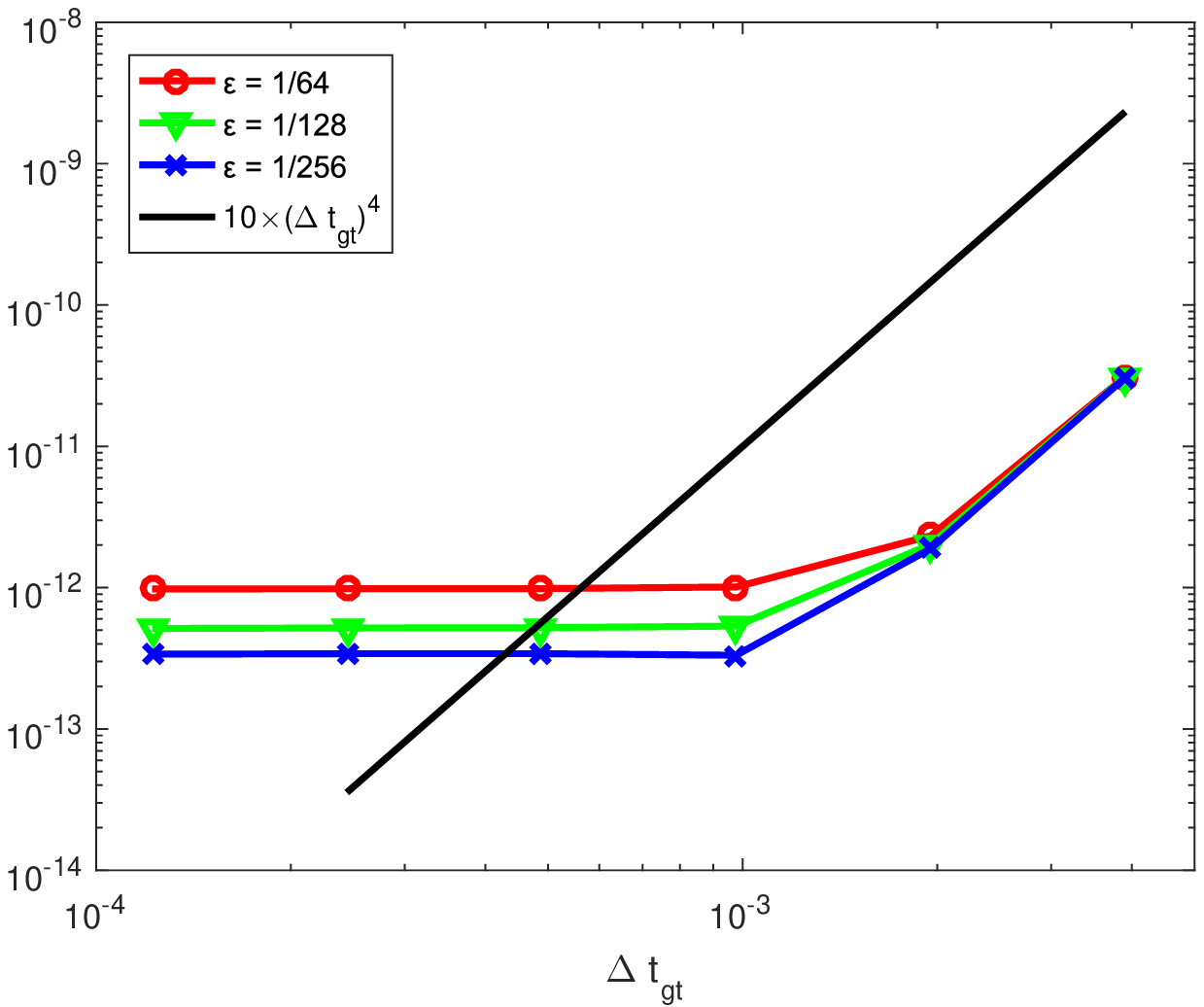}
		\hspace{-0.5cm}
		\includegraphics[scale=0.5]{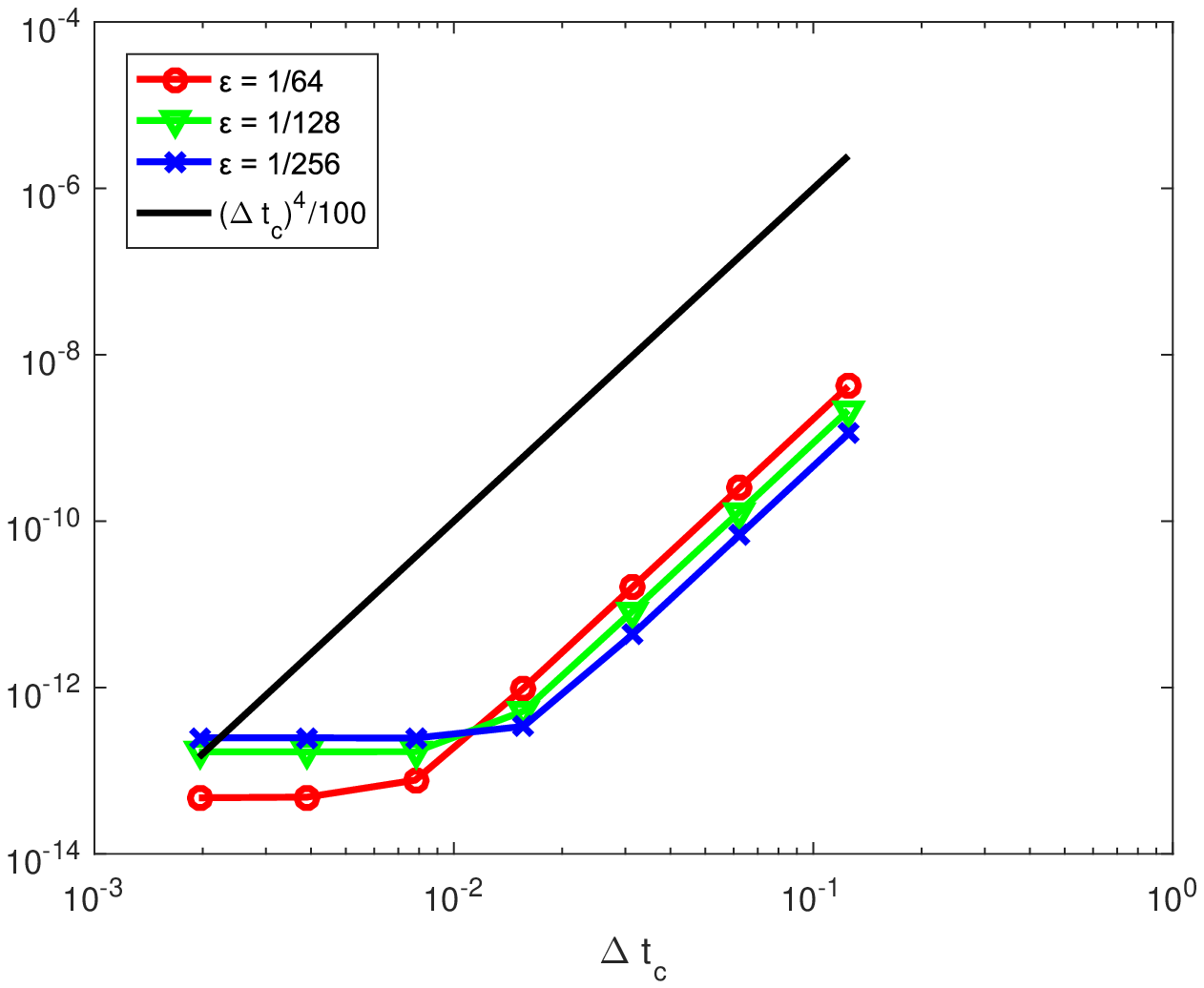}
		\caption{Left panel: We fix $ \Delta t_{c}=1/128 $ and compare the $ \mathbb{L}^2 $ error for different values of $ \Delta t_{gt} $ at final time $ t_f = 0.125 $. We choose $n = 30$ wave-packets in each test. $N_Q = 50 $-point Gauss-Hermite rule is used to evaluate the Galerkin matrix. Right panel: We fix time step satisfying $ \Delta t_{gt}=1/2048 $ and compare the $ \mathbb{L}^2 $ error for different values of $ \Delta t_{c}$ at final time $ t_f = 0.125 $. We choose 30 wave-packets in each test. $ 50 $-point Gauss-Hermite rule is used to evaluate the Galerkin matrix. Reference solution for $ \psi $: SP4 on $ (-\pi,\pi) $ with $ \Delta t_{REF}= \varepsilon/64 $ and spatial grid size $ \Delta x_{REF} = 2\pi\varepsilon/64 $}
		\label{fig:Delta_t}
	\end{figure}
	Third, we will verify the spectral convergence with respect to the number of wave-packets $ n $. We take various values of $ n $ and let the quadrature points be $N_Q = 30 $ to achieve the best approximation of Galerkin matrix. The time step is chosen to be $ \Delta t_c = 1/128 $, $ \Delta t_{gt} =1/2048 $. The result is shown in the left panel of Figure \ref{fig:Test_2}.
	\begin{figure}[htbp]
		\centering
		\includegraphics[scale=0.5]{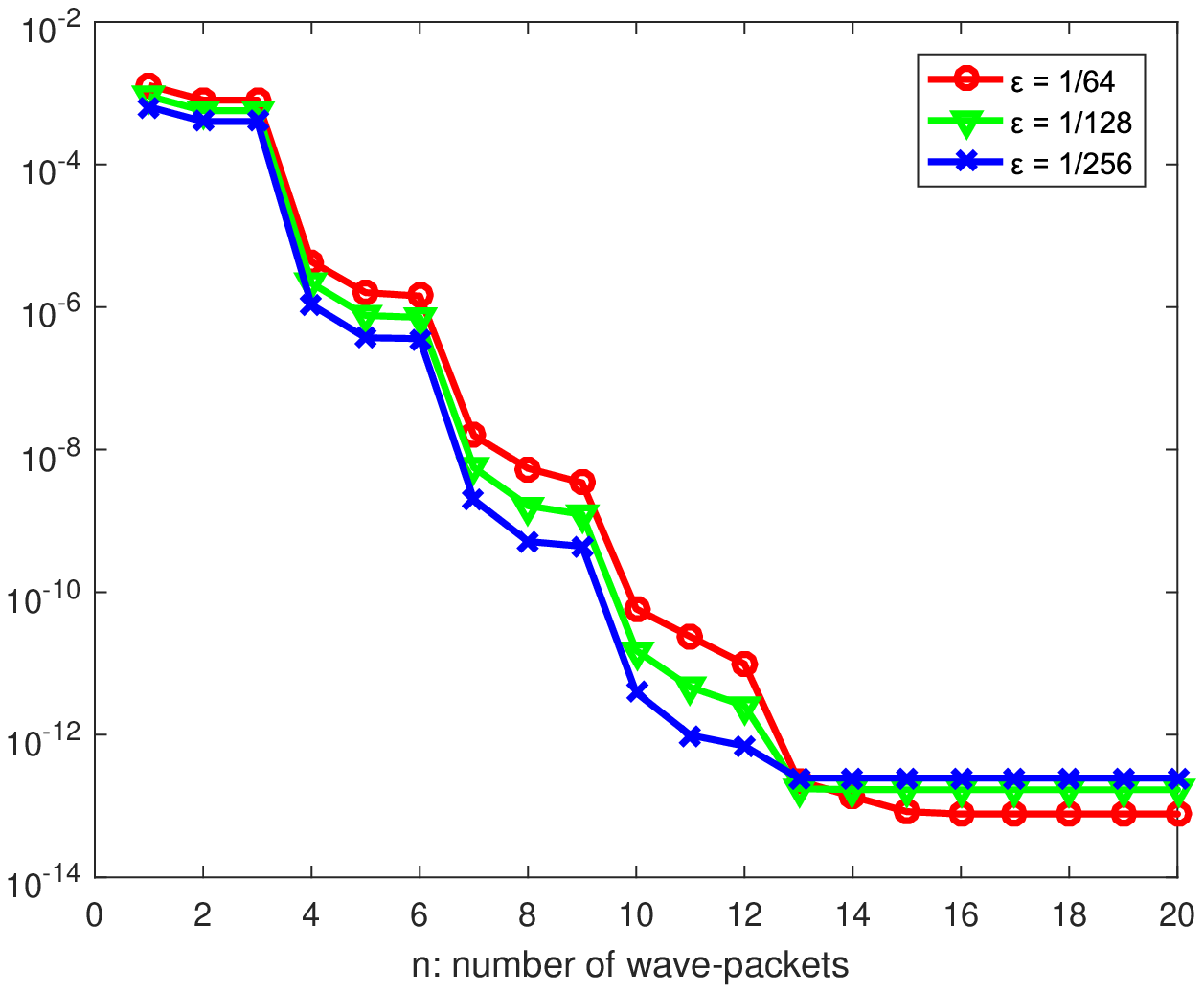}
		\hspace{-0.5cm}
		\includegraphics[scale=0.5]{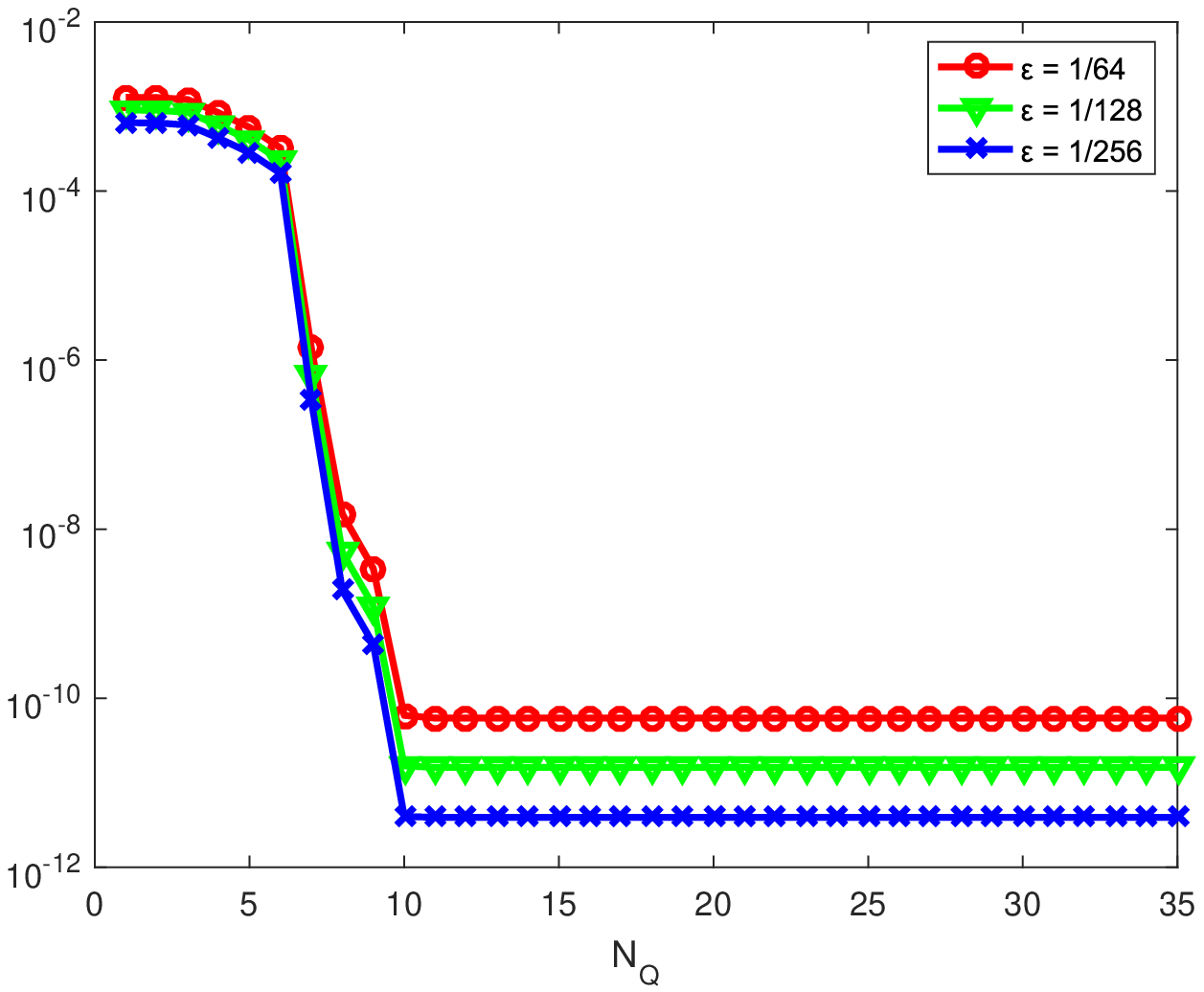}
		\caption{Left panel: For fixed time step size $ \Delta t_c=1/128 $, $ \Delta t_{gt} = 1/2048 $ and final time $ t_f = 0.125 $, we compare the $ \mathbb{L}^2 $ error of GWPT+HWP in 1d with different values of wave-packets $ n $ and $ \varepsilon $. $ 30 $-point Gauss-Hermite quadrature rule is used to evaluate the Galerkin matrix. Right panel: For fixed time step satisfying $ \Delta t_c =1/128 $, $ \Delta t_{gt} = 1/2048 $ and final time $ t_f = 0.125 $, we compare the $ \mathbb{L}^2 $ error of GWPT+HWP in 1d with different values of quadrature points $ N_Q $ and $ \varepsilon $. We choose $ 10 $ wave-packets in each test. Reference solution for $ \psi $: SP4 on $ (-\pi,\pi) $ with $ \Delta t_{REF}= \varepsilon/64 $ and spatial grid size $ \Delta x_{REF} = 2\pi\varepsilon/64 $.}
		\label{fig:Test_2}
	\end{figure}
	We observe the spectral convergence, thus the results validate our previous analysis given in \eqref{eqn:thm_appro}. The plateau appeared in the result is due to the error of the reference solution. \par
	Then we will test the relation between the number of Gauss-Hermite quadrature points and the $ \mathbb{L}^2 $ error at $ t_f=0.125 $, with time step $ \Delta t_c = 1/128 $, $ \Delta t_{gt} = 1/2048$. Here we choose $n = 10 $ wave-packets. The result is shown in the right panel of Figure \ref{fig:Test_2}. This shows the Galerkin matrix is solved with spectral accuracy with respect to the number of quadrature points $ N_{Q} $. The results also tell us how to choose the number of quadrature points properly to reduce computational cost.\par 
	In the following part, we test the relation between $ \alpha_{I} $ and the error of GWPT method. We will fix $ n=30 $ and time step $ \Delta t_c=1/128 $, $ \Delta t_{gt} =1/2048 $. The quadrature points is chosen to be $ N_{Q}=50 $ when evaluating the Galerkin matrix. The result is shown in Figure \ref{fig:Test_1}.
	\begin{figure}[htbp]
		\centering
		\includegraphics[width=10cm]{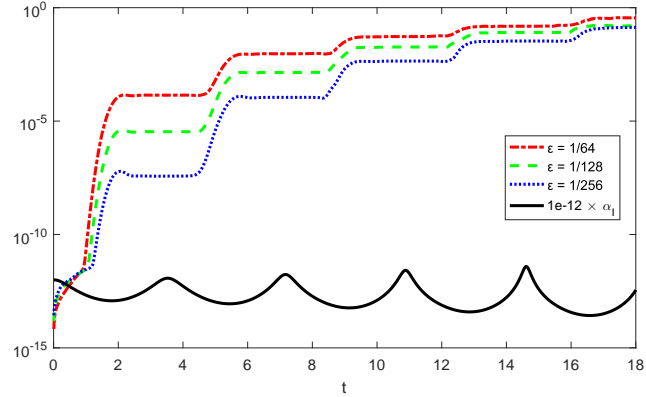}
		\caption{For fixed time step $ \Delta t_{c}= 1/128$, $ \Delta t_{gt}=1/2048 $, we compare the $ \mathbb{L}^2 $ error  with $\alpha_I$ from $ t = 0 $ to $ t=18 $. 100-point quadrature rule is used to evaluate the Galerkin matrix. Reference solution for $\psi$: SP4 on $ (-\pi,\pi) $ with $ \Delta t_{REF}= \varepsilon/64 $ and spatial grid size $ \Delta x_{REF} = 2\pi\varepsilon/64 $}
		\label{fig:Test_1}
	\end{figure}  
	As we can see from the above figure, the error has a sudden increase when $ \alpha_{I} $ is small. This behavior is typical in the GWPT, as mentioned in \cite{Russo2013,Russo2014}. This phenomenon also verifies the factor appeared in the error estimate.\par 
	Last but not least, for fixed final time $ t_f = 4 $ and time step $ \Delta t_c= 1/64 $, $ \Delta t_{gt}=1/1024 $, we compare the CPU time for various number of wave-packets. Given $ n $, the number of wave-packets, we choose $ n+5 $ Gauss-Hermite quadrature rule to evaluate the Galerkin matrix. The results are given in the following Table \ref{table:CPU1d}. 
	\begin{table}
		\centering
		\begin{tabular}{|c|c|c|c|}
			\hline
			\diagbox{$ n $ }{CPU time}{$ \varepsilon  $} &   1/64   &  1/256   &  1/1024  \\ \hline
			8                       & 0.062209 & 0.062140 & 0.061486 \\ \hline
			16                      & 0.077569 & 0.077157 & 0.076715 \\ \hline
			32                      & 0.129524 & 0.124898 & 0.124819  \\ \hline
			64                      & 0.330558 & 0.312421 & 0.325762 \\\hline
		\end{tabular} 
		\caption{For different values of $\varepsilon$, we compare the CPU time for different numbers of wave-packets $ n $ with fixed time step size $ \Delta t_c=1/64 $, $ \Delta t_{gt} = 1/1024 $ and final time $ t_f = 4 $. $ n+5 $-point Gauss-Hermite rule is applied to evaluate the Galerkin matrix.}
		\label{table:CPU1d}	
	\end{table}

	\subsection{Example 2, 2d Case} Besides testing the properties of GWPT+HWP in 2d, we will explore the possibility of applying the method to multi-dimensional cases. Note that we do not have rigorous numerical analysis for multi-dimensional cases. In this example, we choose the potential function as $ V(x) = 2-\cos(x)-\cos(y) $. The initial value is chosen to be:
	$$ \psi_{2d}(x,y,0) =  \left(\frac{2}{\pi\varepsilon}\right)^{1/2} \exp\bigg[ -\frac{(x-\pi/2)^2+(y-\pi/3)^2}{\varepsilon} \bigg]. $$
	The reference solution is computed by the SP4 method with uniform spatial collocation point on $ [-\pi,\pi)^2 $ and with spatial grid size $ \Delta x_{REF} = 2\pi\varepsilon/16 $ and time step $ \Delta t_{REF}=\varepsilon/32 $.\par
	First, we will show that the difference between $ \mathcal{K} = \mathcal{K}^{n}_{\infty} $ and $ \mathcal{K} =\mathcal{K}^{n}_{1} $, as defined in \eqref{eqn:full_grid}-\eqref{eqn:tri_grid}, is negligible. We choose $ \varepsilon=1/128 $, $ \Delta t_c = 1/128 $, $ \Delta t_{gt}=1/2048 $. The quadrature rule is chosen to be the tensor product of two $ 50 $-point Gauss-Hermite rules to achieve the best approximation to the Galerkin matrix for both cases. The result is shown in Table \ref{table:FT}. We can see that the difference in the error between $ \mathcal{K}= \mathcal{K}^{n}_{\infty} $ and $ \mathcal{K}=\mathcal{K}^{n}_{1} $ is quite small. 
	\begin{table}
		\centering
		\begin{tabular}{|c|c|c|c|c|}
			\hline
			\multicolumn{1}{|c|}{n}&\multicolumn{1}{|c|}{$ \mathcal{K}=\mathcal{K}^{n}_{\infty} $}&\multicolumn{1}{|c|}{$\#(\mathcal{K}^{n}_{\infty}) $}&\multicolumn{1}{|c|}{$ \mathcal{K}=\mathcal{K}^{n}_{1} $}&\multicolumn{1}{|c|}{$ \#(\mathcal{K}^{n}_{1}) $}\\\hline
			4   & 0.0037 &  25  & 0.0038 & 15  \\\hline 
			9   & 2.2075e-04 &  100 & 2.2182e-04 & 55  \\\hline     
			14  & 1.9813e-05 &  225 & 1.9850e-05 & 120 \\\hline
			19  & 2.3519e-06 &  400 & 2.3552e-06 & 210 \\\hline
			24  & 3.6864e-07 &  625 & 3.6904e-07 & 325 \\\hline
			29  & 6.8733e-08 &  900 & 6.8794e-08 & 465 \\\hline
		\end{tabular} 	
		\caption{For $\varepsilon=1/128$, $ \Delta t_c=1/128 $, $ \Delta t_{gt} = 1/2048$ and final time $ t_f = 2 $, we compare the $ \mathbb{L}^2 $ error of $ \mathcal{K} = \mathcal{K}^{n}_{\infty} $ with the error of $ \mathcal{K}^{n}_{1} $. The tensor product of two $ 50 $-point Gauss-Hermite rule is used to evaluate the Galerkin matrix. Reference solution for $\psi$: SP4 on $ (-\pi,\pi)^{2} $ with $ \Delta t_{REF}= \varepsilon/32 $ and spatial grid size $ \Delta x_{REF} = 2\pi\varepsilon/16 $}
		\label{table:FT}
	\end{table}  \par
	Second, we will numerically verify the spectral convergence w.r.t. $ n $ in $ \mathcal{K}_{T}^{1} $. We choose $ \Delta t_c = 1/128 $, $ \Delta t_{gt} = 1/2048 $ and final time $ t_f = 0.125 $. The quadrature rule is chosen to be the tensor product of two $ 50 $-point Gauss-Hermite rules to evaluate the Galerkin matrix. The result is shown in the left panel of Figure \ref{fig:test4}. 
	\begin{figure}[htbp]
		\centering
		\includegraphics[scale=0.5]{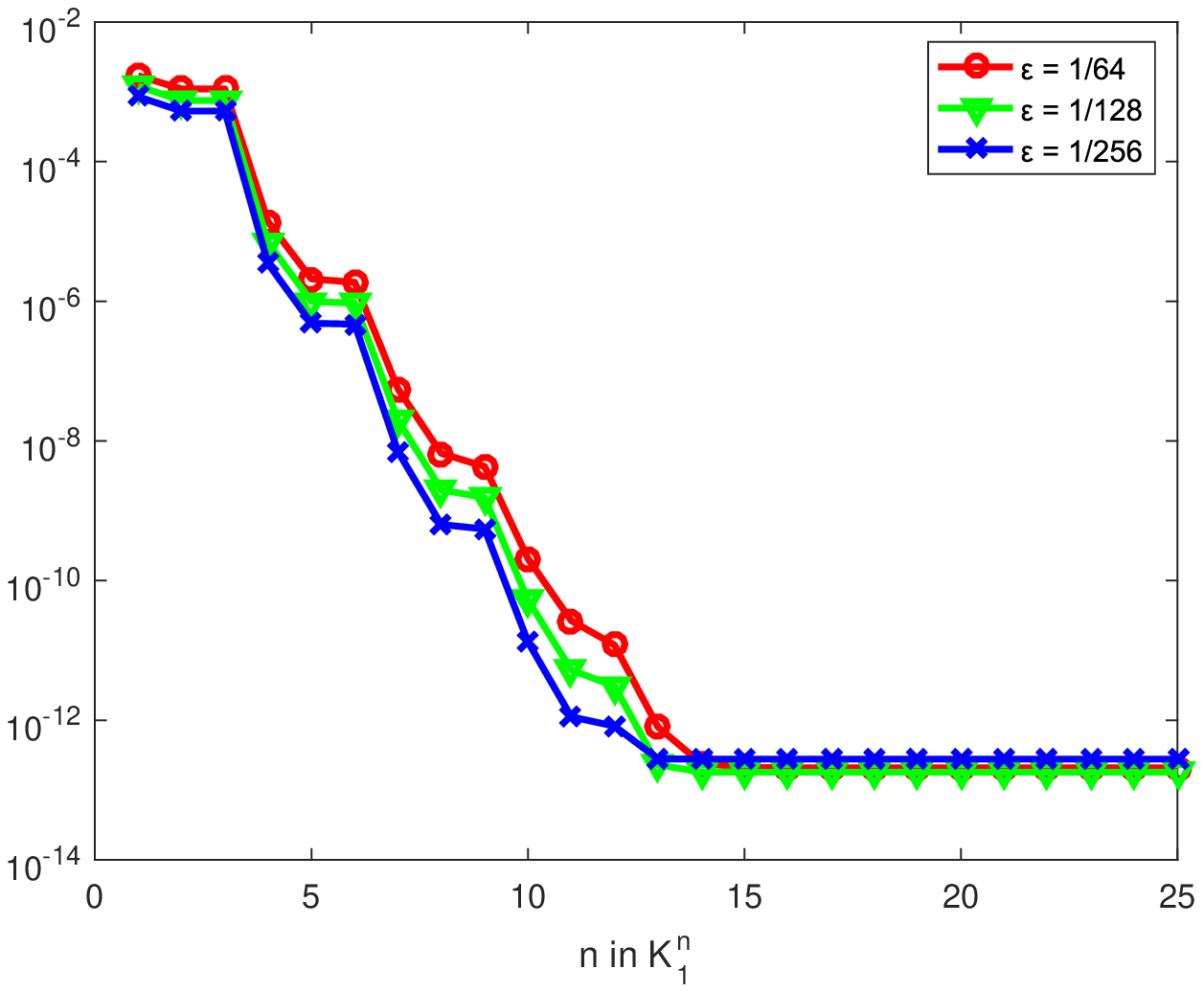}
		\hspace{-0.5cm}
		\includegraphics[scale=0.5]{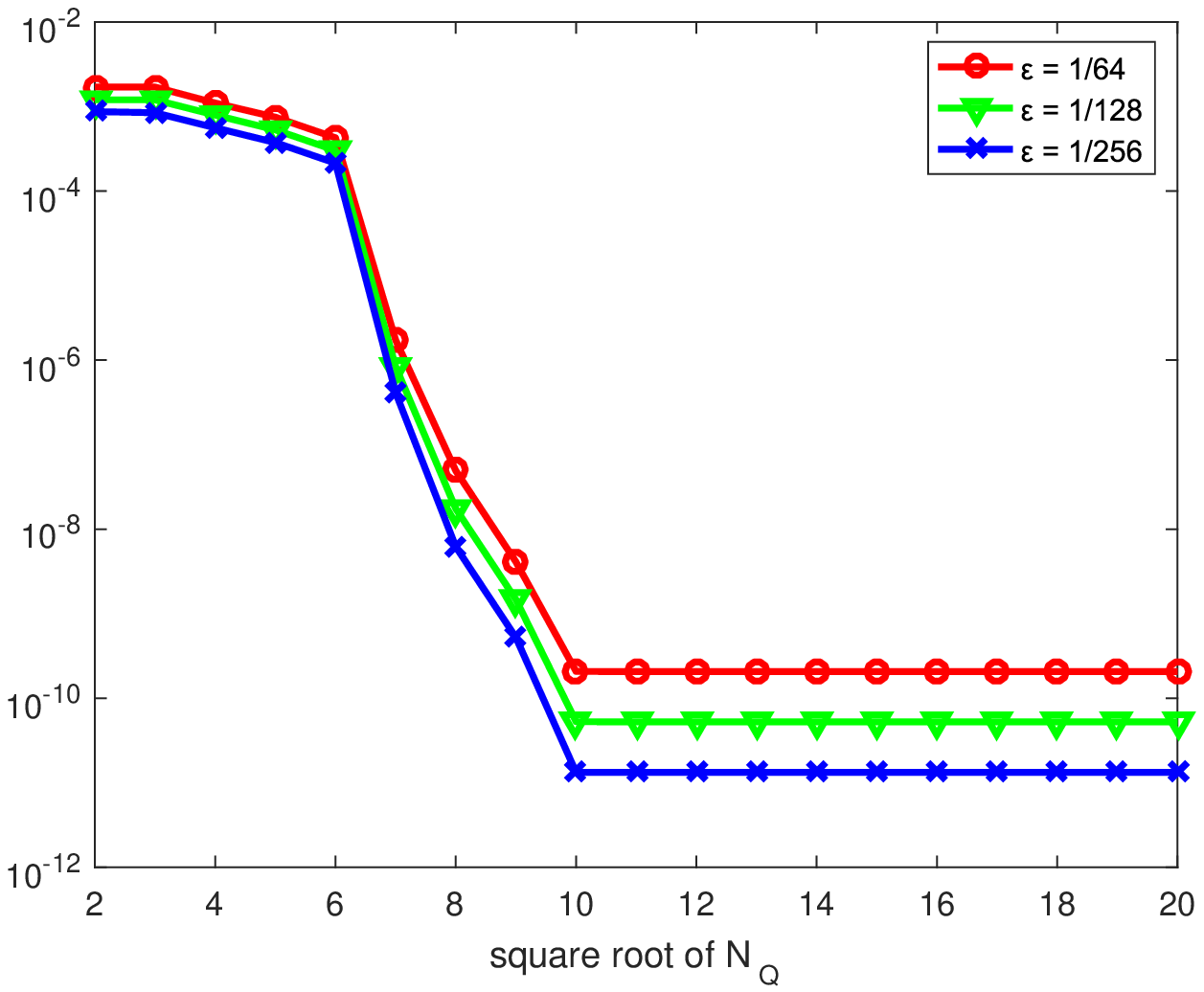}
		\label{fig:2d_NGH}
		\caption{Left panel: For fixed time step size $ \Delta t_{c} = 1/128 $, $ \Delta t_{gt}=1/2048 $ and final time $ t_f = 0.125 $, we compare the $ \mathbb{L}^2 $ error of GWPT+HWP in 2d with different values of $ n $ in $\mathcal{K}=\mathcal{K}^{n}_{1}$. The tensor product of two $ 50 $-point Gauss-Hermite rule is used to evaluate the Galerkin matrix. Right panel: For fixed time step size $ \Delta t_{c} = 1/128 $, $ \Delta t_{gt} = 1/2048 $ and final time $ t_f = 0.125 $, we compare the $ \mathbb{L}^2 $ error of GWPT+HWP in 2d. The wave-packets is $ \mathcal{K} = \mathcal{K}_{1}^{10} $ in each test. Reference solution for $\psi$: SP4 on $ (-\pi,\pi)^{2} $ with $ \Delta t_{REF}= \varepsilon/32 $ and spatial grid size $ \Delta x_{REF} = 2\pi\varepsilon/16 $.}
		\label{fig:test4}
	\end{figure}  
	\par
	Third, we will test the relation between different numbers of quadrature points and the $ \mathbb{L}^2 $ error. We choose $ \mathcal{K}=\mathcal{K}^{10}_{1} $ and $ \Delta t_c = 1/128 $, $ \Delta t_{gt} = 1/2048 $ to compare the $ \mathbb{L}^2 $ error at $ t_f=0.125 $ for various number of quadrature points $ N_Q $. The quadrature rule is chosen to be the tensor product of two $ \sqrt{N_Q} $-point Gauss-Hermite rules to evaluate the Galerkin matrix. The result is shown in the right panel of Figure \ref{fig:test4}. \par
	Last but mot least, we will also compare CPU time for different values of $ n $ in $ \mathcal{K}^{n}_{1} $. We fixed the final time $ t_f=4 $ and time step $ \Delta t_c  = 1/64 $, $ \Delta t_{gt} = 1/1024 $. We choose tensor product of two $ n+5 $ points Gauss-Hermite quadrature rule. The results are shown in Table \ref{table:2d}. The time spent is independent of the semi-classical parameter $ \varepsilon $. 
	\begin{table} %
		\centering
		\begin{tabular}{|c|c|c|c|}
			\hline
			\diagbox{$ n $ }{CPU time}{$ \varepsilon  $} &    1/64    &   1/256    &   1/1024   \\ \hline
			8                       &  1.129205  &  1.088372  &  1.091377  \\ \hline
			16                      &  6.322889  &  6.320319  &  6.310236  \\ \hline
			32                      & 102.838968 & 107.011030 & 104.107240 \\ \hline
			64                      & 744.999232 & 740.200646 & 740.016302 \\ \hline
		\end{tabular}
		\caption{For different values of $\varepsilon$, we compare the CPU time for different numbers of wave-packets $ n $ with fixed time step size $ \Delta t_c=1/64 $, $ \Delta t_{gt} = 1/1024 $ and final time $ t_f = 4 $. The tensor product of two $ n+5 $-point Gauss-Hermite rule is applied to evaluate the Galerkin matrix.}
		\label{table:2d} 	
	\end{table}
	\section{Conclusions}\label{sec:Remarks}
	In this article, we propose a novel spectral method based on the GWPT and Hagedorn's wave-packets. It circumvents the problem of artificial boundary condition of the original GWPT method, and facilitates numerical analysis. We provide a general framework for the error analysis of  GWPT based numerical methods, and prove that our proposed method has spectral convergence with respect to the number of the wave-packets in one dimension. Various numerical tests are presented to show different properties of the novel spectral method. The numerical results also validates our numerical analysis. This method does not use fast Fourier transform, which shows the possibility of parallel algorithm in moderate high dimension. Preliminary results indicate the method could be efficient in several dimensions, and can be carried out in a more efficient way, so we plan to adopt this approach for more challenging high dimensional problems.  \par 
	\section*{Acknowledgements}
	Z. Zhou is supported by the National Key R\&D Program of China, Project Number 2020YFA0712000 and NSFC grant No. 11801016, No. 12031013. Z. Zhou is also partially supported by Beijing Academy of Artificial Intelligence (BAAI). 
	G. Russo acknowledges partial support from ITN-ETN Horizon 2020 Project ModCompShock, ``Modeling and Computation of Shocks and Interfaces'', Project Reference 642768, and from the Italian Ministry of University and Research (MIUR), PRIN Project 2017 (No. 2017KKJP4X entitled “Innovative numerical methods for evolutionary partial differential equations and applications”.\\
	
	\bibliographystyle{IMANUM-BIB}
	\bibliography{IMANUM-refs}
	
	\appendix
		\section*{Appendix A.\ Proof of estimate \eqref{eqn:err_H1norm} and estimate \eqref{eqn:err_H1moment}}
		\label{app1}
		\subsection*{A.1\ Review of basic facts}
		We recall that the $ w(\eta,t) $ satisfies 
		\begin{align*}
			\iu\partial_t w&=-\frac{1}{2}\operatorname{tr}(B^{T}\nabla_{\eta}\nabla_{\eta}wB)+2\eta^{T}BB^{T}\eta w+\sqrt{\varepsilon} U(\eta;q)w\triangleq \hat{H}(t)w,\\
			w(\eta,0) &= \frac{2^{d/4}}{(\pi\varepsilon)^{d/4}} \exp ( -\eta^{T}\eta ) \triangleq w_{0}(\eta).
		\end{align*}
		where $ U(\eta;q) = \varepsilon^{-3/2} V_2(\sqrt{\varepsilon}B^{-1}\eta;q) \sim \mathcal{O}(1)$ is given by
		\begin{equation*}
			U(\eta;q) = \frac{1}{\varepsilon^{3/2}}[V(q + \sqrt{\varepsilon}B^{-1}\eta) - V(q) - \sqrt{\varepsilon} \nabla V(q)(B^{-1}\eta) - \frac{\varepsilon}{2}(B^{-1}\eta)^{T}\nabla\nabla V(q)B^{-1}\eta].
		\end{equation*} 
		Then we introduce the non-autonomous semi-group. First we consider the problem
		\begin{align}
			\iu\partial_t w_q&=-\frac{1}{2}\operatorname{tr}(B^{T}\nabla_{\eta}\nabla_{\eta}w_qB)+2\eta^{T}BB^{T}\eta w_q = \hat{H}_{q}w_q,\\
			w_q(\eta,0) &= w_{0}(\eta).
		\end{align}
		The operator $ -\iu\hat{H}_q(t) $ generates a non-autonomous semi-group defined by 
		\begin{defini}
			We fix $ T>0 $ and call a two parameter family $ (R_{t,s})_{T\ge t\ge s\ge 0} $ a \emph{continuous non-autonomous semi-group} of linear and bounded operators on Banach space $ X $ if the following conditions hold
			\begin{itemize}
				\item [(i)] $ \forall s,t\ge 0 $, the operator $ R_{t,s}: f\mapsto R_{t,s}f $ is linear and continuous on $ X $;
				\item [(ii)] $ \forall f\in X $, the map $ (t,s)\mapsto R_{t,s}f $ is continuous, where $ 0\le s\le t\le T $;
				\item [(iii)] $ \forall T\ge t\ge r \ge s \ge 0 $, $ R_{s,s} = \operatorname{Id} $ and $ R_{t,r}\circ R_{r,s} = R_{t,s} $;
				\item [(iv)] There exists $ b\in \mathbb{R} $, $ \exists M\ge 1, $
				\begin{align}
					\Vert R_{t,s} \Vert_{X}\le M\exp[b(t-s)],\quad \forall T  \ge t\ge s\ge 0.
				\end{align}
			\end{itemize}
		\end{defini}
		By the expansion of Hagedorn's wave-packets, we can prove that $ -\iu\hat{H}_{q} $ generates a continuous non-autonomous unitary semi-group. In the following part of the article, we let $ R_{t,s} $ to be the non-autonomous semi-group generated by the operator $ -\iu\hat{H}_{q} $. \par  
		If we suppose the perturbation $ U(\eta,t) $ is regular enough, we have, by Duhamel's principle:
		\begin{align}\label{eqn:Duhamel}
			w(\eta ,t) = R_{t,0}w_0(\eta) -\iu \sqrt{\varepsilon}\int_{0}^{t}R_{t,s}U(\eta;q(t))w(\eta,s)ds.
		\end{align}
		In this section, we do not intend to prove the two estimates \eqref{eqn:err_H1norm},\eqref{eqn:err_H1moment} in the most general form. So let us suppose that $ U(\eta;q(t)) $ and its gradient $ \nabla_{\eta} U(\eta;q(t)) $ are bounded uniformly in $\mathbb{L}^{\infty}$ for $ t\in [0,T] $.\par
		To prove the desired estimates, we recall the integral form of Gronwall's inequality.
		\begin{lemma}[Gronwall's inequality]\label{lem:Grownwall}
			If $\alpha,\beta\in \mathbb{L}^{\infty}_{t}([0,T])$ is non-negative and if $ u $ satisfies the integral inequality
			$$ u(t) \le \alpha(t)+\int_{0}^{t}\beta(s)u(s)ds,\quad \forall t\in [0,T], $$
			then 
			$$ u(t) \le \alpha(t)+\int_{0}^{t}\alpha(s)\beta(s)\exp\left( \int_{0}^{s}\beta(r)dr \right)ds,\quad \forall t\in [0,T]. $$
		\end{lemma}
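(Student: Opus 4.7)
The statement is the classical integral form of Gr\"onwall's inequality, so the plan is the standard integrating-factor argument; the only point requiring mild care is keeping track of what regularity is needed so that the differential manipulation below is legitimate for $\alpha,\beta\in\mathbb{L}^{\infty}_{t}([0,T])$ rather than smooth data.

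The plan is to introduce the auxiliary function
\[
v(t) \;=\; \int_{0}^{t}\beta(s)u(s)\,ds,
\]
which is absolutely continuous on $[0,T]$ with $v(0)=0$ and $v'(t)=\beta(t)u(t)$ almost everywhere. The hypothesis rewrites as $u(t)\le \alpha(t)+v(t)$, and multiplying by the non-negative $\beta(t)$ yields the scalar differential inequality
\[
v'(t)\;\le\;\beta(t)\alpha(t)\;+\;\beta(t)v(t)\qquad\text{a.e. }t\in[0,T].
\]
Next I would multiply by the integrating factor $\mu(t)=\exp\bigl(-\int_{0}^{t}\beta(r)\,dr\bigr)$. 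Since $\mu'=-\beta\mu$, the product $v\mu$ is absolutely continuous and $(v\mu)'=(v'-\beta v)\mu\le \beta(t)\alpha(t)\mu(t)$ a.e. Integrating from $0$ to $t$ and using $v(0)=0$ gives
\[
v(t)\mu(t)\;\le\;\int_{0}^{t}\alpha(s)\beta(s)\mu(s)\,ds,
\]
after which dividing by $\mu(t)>0$ and substituting into $u(t)\le\alpha(t)+v(t)$ produces the claimed bound (with $\exp\bigl(\int_{s}^{t}\beta(r)dr\bigr)$ in the kernel; monotonicity of the exponential shows this is majorised by $\exp\bigl(\int_{0}^{s}\beta(r)dr\bigr)\cdot\exp\bigl(\int_{0}^{t}\beta(r)dr\bigr)$, which delivers the form written in the lemma once one absorbs the constant factor, or one simply states the standard form directly).

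There is no serious obstacle in this proof; the only delicate point is the justification that $v$ is absolutely continuous and that the chain rule for $v\mu$ holds almost everywhere, which follows because $\beta u\in \mathbb{L}^{1}_{t}$ under the hypotheses (boundedness of $\beta$ together with the fact that $u$, controlled by $\alpha+v$, lies in $\mathbb{L}^{\infty}_{t}$ after a first iteration of the inequality). Once that is in place, the remainder is a one-line computation, so the proof can be written in a few lines.
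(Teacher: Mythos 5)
The paper offers no proof of this lemma at all --- it is recalled as a classical fact --- so there is no argument of the paper's to compare yours against; your integrating-factor proof is the standard one and is essentially correct. The real issue is the mismatch you yourself flag in passing: your computation (correctly) yields
\[
u(t)\;\le\;\alpha(t)+\int_{0}^{t}\alpha(s)\beta(s)\exp\Bigl(\int_{s}^{t}\beta(r)\,dr\Bigr)ds,
\]
whereas the lemma as printed has $\exp\bigl(\int_{0}^{s}\beta(r)\,dr\bigr)$ in the kernel. Your proposed bridge --- majorising $\exp(\int_s^t\beta)$ by $\exp(\int_0^s\beta)\exp(\int_0^t\beta)$ and ``absorbing the constant factor'' --- does not deliver the printed statement, because the extra factor $\exp(\int_0^t\beta)$ sits in front of the integral and there is nowhere in the stated inequality to absorb it. In fact the printed statement is false as it stands: take $T=1$, $\beta\equiv 1$, $\alpha=\mathbf{1}_{[0,1/2]}$, and let $u$ solve $u(t)=\alpha(t)+\int_0^t u$; then $u(1)=e-e^{1/2}\approx 1.07$ while the printed right-hand side equals $\int_0^{1/2}e^{s}\,ds=e^{1/2}-1\approx 0.65$. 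So the exponent $\int_0^s$ is a typo for $\int_s^t$, your proof establishes the corrected version, and that corrected version is all that is used in Appendix A (where only a crude bound $C(T)\varepsilon^{-d/4}$ on $[0,T]$ is needed, for which either kernel is dominated by $e^{\|\beta\|_\infty T}$).

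One smaller point: your justification that $v(t)=\int_0^t\beta u$ is well defined and absolutely continuous (``$u$ lies in $\mathbb{L}^\infty$ after a first iteration'') is circular as written, since iterating the inequality already presupposes that the integral term is finite. The clean fix is to make explicit the standing assumption that $u$ is non-negative (or at least that $\beta u\in\mathbb{L}^1([0,T])$), which is how the lemma is applied in the paper; with that hypothesis $v$ is absolutely continuous, $(v\mu)'\le\alpha\beta\mu$ a.e., and the rest of your argument goes through verbatim.
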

		Finally, we will prove that the total mass of $ w $ equation up to some finite time $ T $ is bounded. 
		\begin{lemma}
			If $ U(\eta;q(t)) $ is bounded uniformly for all $ t\in[0,T] $, then we have
			\begin{align}
				\Vert w(\cdot,t) \Vert_{\mathbb{L}^{2}_{\eta}} \le C(T)\varepsilon^{-d/4},
			\end{align}
			where $ C(T) $ is a constant depending on $ T $.
		\end{lemma}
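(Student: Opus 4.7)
The plan is to exploit the Duhamel formulation \eqref{eqn:Duhamel} together with the unitarity of the non-autonomous semi-group $R_{t,s}$ and Gronwall's inequality (\Cref{lem:Grownwall}). First I would record the initial-data computation: a Gaussian integral gives directly
$$
\Vert w_0 \Vert_{\mathbb{L}^2_\eta}^2 = \frac{2^{d/2}}{(\pi\varepsilon)^{d/2}} \int_{\mathbb{R}^d} \exp(-2\eta^T\eta)\, d\eta = \varepsilon^{-d/2},
$$
so $\Vert w_0 \Vert_{\mathbb{L}^2_\eta} = \varepsilon^{-d/4}$. Next I would note that because $\hat{H}_q(t)$ as defined in \eqref{eqn:Hamil_qua} is self-adjoint for every $t$, the propagator $R_{t,s}$ generated by $-\iu\hat{H}_q(t)$ is unitary on $\mathbb{L}^2_\eta$; in particular the constant $M$ and exponent $b$ in the definition of a non-autonomous semi-group can be taken as $M=1$ and $b=0$.

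Applying the $\mathbb{L}^2_\eta$ norm to both sides of the Duhamel identity \eqref{eqn:Duhamel} and using this unitarity, together with the standing hypothesis that $\Vert U(\cdot;q(t))\Vert_{\mathbb{L}^\infty_\eta}\le M_U$ uniformly for $t\in[0,T]$, yields
$$
\Vert w(\cdot,t) \Vert_{\mathbb{L}^2_\eta} \le \Vert w_0 \Vert_{\mathbb{L}^2_\eta} + \sqrt{\varepsilon}\,M_U \int_0^t \Vert w(\cdot,s)\Vert_{\mathbb{L}^2_\eta}\, ds = \varepsilon^{-d/4} + \sqrt{\varepsilon}\,M_U \int_0^t \Vert w(\cdot,s)\Vert_{\mathbb{L}^2_\eta}\, ds.
$$
Then \Cref{lem:Grownwall} with $\alpha(t)\equiv \varepsilon^{-d/4}$ and $\beta(t)\equiv \sqrt{\varepsilon}M_U$ gives $\Vert w(\cdot,t)\Vert_{\mathbb{L}^2_\eta}\le \varepsilon^{-d/4}\exp(\sqrt{\varepsilon}\,M_U t)$, which for $t\in[0,T]$ and $\varepsilon\le 1$ is bounded by $C(T)\varepsilon^{-d/4}$ with $C(T)=\exp(M_U T)$, as claimed.

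There is no real obstacle in this lemma; the only non-trivial input is the unitarity of $R_{t,s}$, which is justified either from the self-adjointness of $\hat{H}_q(t)$ on the natural domain or, more constructively, from the explicit quadratic-Hamiltonian evolution written out in terms of Hagedorn's wave-packets in \eqref{eqn:sol_Harmonic}. One could in fact bypass Duhamel altogether by observing that $\hat{H}(t)$ itself is self-adjoint, so the full $w$-evolution conserves the $\mathbb{L}^2_\eta$ norm and $\Vert w(\cdot,t)\Vert_{\mathbb{L}^2_\eta}\equiv \varepsilon^{-d/4}$; however, I would prefer the Duhamel/Gronwall argument above, since exactly the same template, with $\nabla_\eta$ or $\eta\nabla_\eta$ commuted past $\hat{H}_q$ and the extra commutator terms controlled by the uniform bounds on $U$ and $\nabla_\eta U$, is what will deliver the subsequent estimates \eqref{eqn:err_H1norm} and \eqref{eqn:err_H1moment} needed to close the $\mathbb{H}^1$-type bounds used in \Cref{thm:GWPT}.
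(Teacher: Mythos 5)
Your proof is correct and follows essentially the same route as the paper: Duhamel's formula \eqref{eqn:Duhamel}, unitarity of $R_{t,s}$, the uniform $\mathbb{L}^{\infty}$ bound on $U$, and Gronwall's inequality. The explicit computation of $\Vert w_0\Vert_{\mathbb{L}^2_{\eta}}=\varepsilon^{-d/4}$ and the aside about norm conservation via self-adjointness of $\hat{H}(t)$ are welcome additions but do not change the argument.
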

		\begin{proof}
			By Duhamel's principle, we have
			\begin{align*}
				w(\eta ,t) = R_{t,0}w_0(\eta) -\iu \sqrt{\varepsilon}\int_{0}^{t}R_{t,s}U(\eta;q(t))w(\eta,s)ds.
			\end{align*}
			Then by Minkowski's inequality, it directly follows that 
			\begin{align*}
				\Vert w(\cdot,t) \Vert_{\mathbb{L}^{2}_{\eta}} &= \Vert R_{t,0}w_{0}(\cdot) \Vert_{\mathbb{L}^{2}_{\eta}} + \sqrt{\varepsilon}\bigg\Vert \int_{0}^{t}R_{t,s}U(\cdot;q(t))w(\cdot,s)ds \bigg\Vert_{\mathbb{L}^{2}_{\eta}}\\
				&\le \Vert R_{t,0}w_{0}(\cdot) \Vert_{\mathbb{L}^{2}_{\eta}} + \sqrt{\varepsilon} \int_{0}^{t}\Vert R_{t,s}U(\cdot;q(t))w(\cdot,s)\Vert_{\mathbb{L}^{2}_{\eta}}ds .
			\end{align*}
			Since $ R_{t,s} $ is a unitary operator we have
			\begin{align*}
				\Vert R_{t,0}w_{0}(\cdot) \Vert_{\mathbb{L}^{2}_{\eta}} + \sqrt{\varepsilon} &\int_{0}^{t}\Vert R_{t,s}U(\cdot;q(t))w(\cdot,s)\Vert_{\mathbb{L}^{2}_{\eta}}ds \\&= \Vert w_{0}(\cdot) \Vert_{\mathbb{L}^{2}_{\eta}} + \sqrt{\varepsilon} \int_{0}^{t}\Vert U(\cdot;q(t))w(\cdot,s)\Vert_{\mathbb{L}^{2}_{\eta}}ds\\
				&\le \Vert w_{0}(\cdot) \Vert_{\mathbb{L}^{2}_{\eta}} + \sqrt{\varepsilon} M\int_{0}^{t}\Vert w(\cdot,s)\Vert_{\mathbb{L}^{2}_{\eta}}ds.
			\end{align*}
			Here we suppose $ U(\cdot;q(t)) $ and $ \nabla_{\eta}U $ is uniformly bounded by $ M $.
			$$ \max\{\sup_{t\in[0,T]}\Vert U(\cdot;q(t))\Vert_{\mathbb{L}^{\infty}_{\eta}},\sup_{t\in[0,T]}\Vert \nabla_{\eta}U(\eta;q(t)) \Vert_{\mathbb{L}^{\infty}_{\eta}}\}\le M <+\infty. $$
			By the Gronwall's inequality \Cref{lem:Grownwall}, we immediately come to the conclusion. 
		\end{proof}\par 
		The key idea of the following proof is to notice that, in our case
		\begin{align}
			\eta &= \sqrt{\frac{1}{2}}\left(  Q_h(t)\mathcal{A}^{\dagger}(t) + \overline{Q}_h(t) \mathcal{A}(t)\right),\quad \forall t\in [0,+\infty);\label{eqn:xop}\\
			p &= \sqrt{\frac{1}{2}}\left(  P_h(t)\mathcal{A}^{\dagger}(t) + \overline{P}_h(t) \mathcal{A}(t)\right),\quad \forall t\in [0,+\infty).\label{eqn:pop}
		\end{align}
		Here $ \eta $ denotes the multiplicative operator $ \eta:\mathcal{S}(\mathbb{R}^{d}_{\eta})\to (\mathcal{S}(\mathbb{R}^{d}_{\eta}))^{d} $ that maps any Schwartz function $ f $ to $ \{ \eta_{j}f \}_{j=1}^{d}\in (\mathcal{S}(\mathbb{R}^d_{\eta}))^{d} $. The $ p $ denotes the differential operator $ p:\mathcal{S}(\mathbb{R}_{\eta}^d)\to (\mathcal{S}(\mathbb{R}_{\eta}^d))^{d} $ that maps any Schwartz function $ g $ to $\{ -\iu\partial_{\eta_j} g \}_{j=1}^{d}\in (\mathcal{S}(\mathbb{R}_{\eta}^d))^{d} $.	These equalities are provided in formula (3.28) and (3.29) in \cite{Hagedorn1998}. \par 
		In the following part, $ U(\eta;q(t))w(\eta,t) $ is abbreviated as $ Uw(\eta,t) $.\par
		\subsection*{A.2\ Proof of estimate \eqref{eqn:err_H1norm} and \eqref{eqn:err_H1moment}}
		Here we only prove \eqref{eqn:err_H1norm}, and the estimate \eqref{eqn:err_H1moment} follows from the same steps as below.\par
		By the Duhamel's principle \eqref{eqn:Duhamel}, we have
		\begin{align}
			\nabla_\eta w &= \nabla_\eta R_{t,0}w_0(\eta) -\iu \sqrt{\varepsilon}\nabla_{\eta}\int_{0}^{t}R_{t,s}Uw(\eta,s)ds\notag\\
			&= \nabla_\eta R_{t,0}w_0(\eta) -\iu \sqrt{\varepsilon}\int_{0}^{t}R_{t,s}\nabla_{\eta}[Uw(\eta,s)]ds
			-\iu \sqrt{\varepsilon}\int_{0}^{t}[\nabla_{\eta}R_{t,s}-R_{t,s}\nabla_{\eta}]Uw(\eta,s)ds.\label{eqn:est_nabla}
		\end{align}
		Here $ R_{t,s}\nabla_{\eta}[Uw(\eta,s)] $ is interpreted as the operator $ R_{t,s} $ acting on each component of the vector $ \nabla_{\eta}[Uw(\eta,s)] $. \par 
		To deal with the first term in \eqref{eqn:est_nabla}, we notice that
		\begin{align}\label{eqn:QPFormula}
			Q_h(t) = \frac{1}{\sqrt{2}}\exp\left( 2\iu \int_{0}^{t}BB^{T}(s)ds \right),\quad P_{h}(t) = \sqrt{2}\iu\exp\left( 2\iu \int_{0}^{t}BB^{T}(s)ds \right).
		\end{align} 
		Then by simple observation that $ w_0(\eta) = \varepsilon^{-d/4}\varphi^{1}_{0}[0,0,2^{-1/2}E_d,\sqrt{2}\iu E_d](\eta) $, we have,
		\begin{align*}
			w(\eta,t) = R_{t,0}w_0 &= \varepsilon^{-d/4}\varphi^{1}_{0}[0,0,Q_h(t),P_h(t)](\eta)\\
			&=(\varepsilon\pi)^{-d/4}[\operatorname{det}Q_h(t)]^{-1/2}\exp\left( \frac{\iu}{2}\eta^{T}P_hQ_h^{-1}\eta \right)\\
			&=(\varepsilon\pi)^{-d/4}[\operatorname{det}Q_h(t)]^{-1/2}\exp\left( -\eta^{T}\eta \right).
		\end{align*}
		The last equality holds by the \eqref{eqn:QPFormula}. And we have $ \nabla_\eta R_{t,0}w_0(\eta) = \eu^{\iu\theta}\nabla_{\eta} w_0(\eta) $, where $ \theta $ is a real constant satisfying 
		$$ \eu^{\iu\theta} = [\det Q_{h}(0)/\det Q_{h}(t) ]^{1/2} .$$\par
		As for the second term in \eqref{eqn:est_nabla}, we have, by a simple calculation, 
		\begin{align*}
			-\iu \sqrt{\varepsilon}\int_{0}^{t}R_{t,s}\nabla_{\eta}[Uw(\eta,s)]ds &=- \iu\sqrt{\varepsilon}\bigg\{\int_{0}^{t}R_{t,s}[\nabla_{\eta}U(\eta;q(s))]w(\eta,s)ds \\&\quad+ \int_{0}^{t}R_{t,s}U(\eta;q(s))[\nabla_{\eta}w(\eta,s)]ds  \bigg\}.
		\end{align*}
		\par 
		The third term in \eqref{eqn:est_nabla} brings us the most difficulties. By \eqref{eqn:pop}, we substitute the differential operator by raising and lowering operators
		\begin{align}
			-\iu\left[\nabla_{\eta}R_{t,s} - R_{t,s}\nabla_{\eta}\right] &= \sqrt{\frac{1}{2}}\left\{ \left[ P_h(t)\mathcal{A}^{\dagger}(t) + \overline{P}_h(t) \mathcal{A}(t)\right] R_{t,s}  - R_{t,s}\left[ P_h(s)\mathcal{A}^{\dagger}(s) + \overline{P}_h(s) \mathcal{A}(s) \right]\right\}\notag\\
			&\begin{gathered}\label{eqn:comm_semi_p}	
				= \sqrt{\frac{1}{2}}\Big\{ \left[ P_h(t)\mathcal{A}^{\dagger}(t) + \overline{P}_h(t) \mathcal{A}(t)\right] R_{t,s}  - R_{t,s}\left[ P_h(t)\mathcal{A}^{\dagger}(s) + \overline{P}_h(t) \mathcal{A}(s) \right] \\
				\quad + R_{t,s}\left[ P_h(t)\mathcal{A}^{\dagger}(s) + \overline{P}_h(t) \mathcal{A}(s)\right]   - R_{t,s}\left[ P_h(s)\mathcal{A}^{\dagger}(s) + \overline{P}_h(s) \mathcal{A}(s) \right]\Big\}.\end{gathered}
		\end{align}
		Now we can represent $ Uw(\eta,s)\in\mathbb{L}^{2}(\mathbb{R}^d) $ as superposition of the Hagedorn's wave-packets. 
		$$ Uw(\eta,s) = \sum_{k\in\mathbb{N}^{d}}d_{k}(s)\varphi_{k}^{1}[0,0,Q_h(s),P_{h}(s)](\eta),\quad d_{k}(s)\in \mathbb{C}. $$
		By the definition of raising and lowering operators, we have
		\begin{align*}
			\left[ P_h(t)\mathcal{A}^{\dagger}(s) + \overline{P}_h(t) \mathcal{A}(s) \right]Uw(\eta,s) &= P_{h}(t)\bigg( \sum_{k\in\mathbb{N}^{d}}\sqrt{k_j+1}d_k(s)\varphi_{k+\langle j \rangle}^{1}[0,0,Q_h(s),P_{h}(s)]\bigg)_{j=1}^{d}\\
			&\quad + \overline{P}_{h}(t)\bigg( \sum_{k\in\mathbb{N}^{d}}\sqrt{k_j}d_k(s)\varphi_{k-\langle j \rangle}^{1}[0,0,Q_h(s),P_{h}(s)]\bigg)_{j=1}^{d}.
		\end{align*} 
		And the action of the non-autonomous semi-group operator $ R_{t,s} $ on each wave-packets is represented by
		$$ R_{t,s}:\varphi^{1}_{k}[0,0,Q_{h}(s),P_h(s)]\mapsto \varphi^{1}_{k}[0,0,Q_{h}(t),P_h(t)], \quad \forall k\in\mathbb{N}^{d}.  $$ 
		This leads to 
		\begin{align*}
			R_{t,s}\left[ P_h(t)\mathcal{A}^{\dagger}(s) + \overline{P}_h(t) \mathcal{A}(s) \right]Uw(\eta,s)&= P_{h}(t)\bigg( \sum_{k\in\mathbb{N}^{d}}\sqrt{k_j+1}d_k(s)\varphi_{k+\langle j \rangle}^{1}[0,0,Q_h(t),P_{h}(t)]\bigg)_{j=1}^{d}\\
			&\quad + \overline{P}_{h}(t)\bigg( \sum_{k\in\mathbb{N}^{d}}\sqrt{k_j}d_k(s)\varphi_{k-\langle j \rangle}^{1}[0,0,Q_h(t),P_{h}(t)]\bigg)_{j=1}^{d}.
		\end{align*}
		The right hand side equals to $ \left[ P_h(t)\mathcal{A}^{\dagger}(t) + \overline{P}_h(t) \mathcal{A}(t)\right] R_{t,s}Uw(\eta,s) $. Thus the first part in RHS of \eqref{eqn:comm_semi_p} vanishes. \par 
		To deal with the second part in RHS of \eqref{eqn:comm_semi_p}, we follow the same spirit of the above procedures 
		\begin{align*}
			\Big\{R_{t,s}\big[ &P_h(t)\mathcal{A}^{\dagger}(s) + \overline{P}_h(t) \mathcal{A}(s)\big] - R_{t,s}\left[ P_h(s)\mathcal{A}^{\dagger}(s) + \overline{P}_h(s) \mathcal{A}(s) \right]\Big\}Uw(\eta,s)\\ &=R_{t,s}\left[P_{h}(t)-P_{h}(s)\right]\Big( \sum_{k\in\mathbb{N}^{d}}\sqrt{k_j+1}d_k(s)\varphi_{k+\langle j \rangle}^{1}[0,0,Q_h(s),P_{h}(s)]\Big)_{j=1}^{d}\\
			&\quad +R_{t,s}\left[ \overline{P}_{h}(t)-\overline{P}_{h}(s)\right]\Big( \sum_{k\in\mathbb{N}^{d}}\sqrt{k_j}d_k(s)\varphi_{k-\langle j \rangle}^{1}[0,0,Q_h(s),P_{h}(s)]\Big)_{j=1}^{d}  \\
			&=R_{t,s}\left[P_{h}(t)-P_{h}(s)\right] \mathcal{A}^{\dagger}(s)Uw(\eta,s) + R_{t,s}\left[ \overline{P}_{h}(t)-\overline{P}_{h}(s)\right] \mathcal{A}(s)Uw(\eta,s).
		\end{align*}
		Notice that $ P_h(t) = P_{h}(t-s)P_{h}(s) $, we can rewrite the above formula into
		\begin{align*}
			R_{t,s}\left[P_{h}(t-s)-E_d\right]P_h(s) \mathcal{A}^{\dagger}(s)Uw(\eta,s) + R_{t,s}\left[ \overline{P}_{h}(t-s)-E_d\right]\overline{P}_h(s) \mathcal{A}(s)Uw(\eta,s).
		\end{align*}
		Separating the real part and the imaginary part of the matrix $ P_h(t-s) = P_{h,R}(t-s) + \iu P_{h,I}(t-s) $, we have
		\begin{align*}
			&R_{t,s}\left[P_{h}(t-s)-E_d\right]P_h(s) \mathcal{A}^{\dagger}(s)Uw(\eta,s) + R_{t,s}\left[ \overline{P}_{h}(t-s)-E_d\right]\overline{P}_h(s) \mathcal{A}(s)Uw(\eta,s)\\
			&= R_{t,s}\left[P_{h,R}(t-s)-E_d\right]P_h(s) \mathcal{A}^{\dagger}(s)Uw(\eta,s) + R_{t,s}\left[ P_{h,R}(t-s)-E_d\right]\overline{P}_h(s) \mathcal{A}(s)Uw(\eta,s)\\
			&\quad+\iu R_{t,s} P_{h,I}(t-s)P_h(s) \mathcal{A}^{\dagger}(s)Uw(\eta,s) -\iu R_{t,s} P_{h,I}(t-s)\overline{P}_h(s) \mathcal{A}(s)Uw(\eta,s).
		\end{align*}
		Here $ P_{h,R} $ and $ P_{h,I} $ are real matrices whose matrix 2-norm is less than $ 2 $ for all $ t,s\in \mathbb{R} $. 
		Recalling the general formula for $ Q_h(s) $ and $ P_{h}(s) $ given in \eqref{eqn:QPFormula}, we then have 
		\begin{align*}
			&R_{t,s}\left[P_{h}(t-s)-E_d\right]P_h(s) \mathcal{A}^{\dagger}(s)Uw(\eta,s) + R_{t,s}\left[ \overline{P}_{h}(t-s)-E_d\right]\overline{P}_h(s) \mathcal{A}(s)Uw(\eta,s)\\
			&= -\sqrt{2}R_{t,s}\left[P_{h}(t-s)-E_d\right]\iu\nabla_{\eta}[Uw(\eta,s)]\\
			&\quad +\iu R_{t,s} P_{h,I}(t-s)\sqrt{2}\iu\exp(\int_{0}^{s}BB^{T}(\tau)d\tau)\mathcal{A}^{\dagger}(s)Uw(\eta,s)  \\&\quad+\iu R_{t,s} P_{h,I}(t-s)\sqrt{2}\iu\exp(\int_0^{s}BB^{T}(\tau)d\tau) \mathcal{A}(s)Uw(\eta,s)\\
			&=-\sqrt{2}R_{t,s}\left[P_{h,R}(t-s)-E_d\right] \iu\nabla_{\eta}[Uw(\eta,s)] -2R_{t,s}P_{h,I}(t-s)[\eta Uw(\eta,s)].
		\end{align*}
		We apply the representation formula of operator $ \eta $ \eqref{eqn:xop} in the last equality. \par 
		Thus, by the Cauchy's inequality and the integral form of Minkowski's inequality, we have
		\begin{align}
			\Vert \nabla_{\eta} w \Vert_{\mathbb{L}^{2}_{\eta}} &\le 5\bigg\{ \Vert \nabla_{\eta} w_0 \Vert_{\mathbb{L}^{2}_{\eta}} + \sqrt{\varepsilon} M \int_{0}^{t}\big(\Vert w(\cdot,s) \Vert_{\mathbb{L}^{2}_{\eta}} + \Vert \nabla_{\eta} w \Vert_{\mathbb{L}^{2}_{\eta}}\big) ds\notag \\
			&\quad+ 3\sqrt{2} M\sqrt{\varepsilon}\int_{0}^{t}[\Vert w(\cdot,s) \Vert_{\mathbb{L}^{2}_{\eta}} + \Vert \nabla_{\eta} w(\cdot,s) \Vert_{\mathbb{L}^{2}_{\eta}}] ds+4M\sqrt{\varepsilon} \int_{0}^{t}\Vert \eta w(\eta,s)  \Vert_{\mathbb{L}^{2}_{\eta}}ds\bigg\}\notag\\
			&\le C_1\bigg\{ \Vert \nabla_{\eta} w_0 \Vert_{\mathbb{L}^{2}_{\eta}} + \sqrt{\varepsilon}\int_{0}^{t}[\Vert w(\cdot,s) \Vert_{\mathbb{L}^{2}_{\eta}} + \Vert \eta w(\eta,s)  \Vert_{\mathbb{L}^{2}_{\eta}} + \Vert \nabla_{\eta} w(\cdot,s) \Vert_{\mathbb{L}^{2}_{\eta}}] ds\bigg\}.\label{eqn:estiNablaw}
		\end{align}	
		Here 
		$$ \Vert \eta w(\eta,s) \Vert_{\mathbb{L}^{2}_{\eta}}^{2}\triangleq \int_{0}^{t}(\eta^{T}\eta)|w|^2(\eta,s) d\eta, $$
		and $ C_1 $ is a constant depending on $ T $.\par 
		Now we have to establish an estimate for $ \Vert \eta w(\eta,s) \Vert_{\mathbb{L}^{2}_{\eta}}^{2} $. Again by Duhamel's principle, we have
		\begin{align}\label{eqn:weta}
			\eta w(\eta,t) = \eta R_{t,0}w_{0}(\eta) -\iu\sqrt{\varepsilon}\eta\int_{0}^{t}R_{t,s}Uw(\eta,s)ds.
		\end{align}
		Following the same procedure, we can show $ \eta R_{t,0}w_{0}(\eta)  = \eu^{\iu\theta}\eta w_{0}(\eta)$, where $ \theta $ is a real constant defined before. Apply \eqref{eqn:xop} to the second part of RHS in \eqref{eqn:weta}, we obtain 
		\begin{align*}
			-\iu\sqrt{\varepsilon}\eta\int_{0}^{t}R_{t,s}Uw(\eta,s)ds = -\iu\sqrt{\frac{\varepsilon}{2}}\int_{0}^{t}[Q_{h}(t)\mathcal{A}^{\dagger}(t) + \overline{Q}_{h}(t)\mathcal{A}(t)]R_{t,s}Uw(\eta,s)ds.	
		\end{align*}
		Previous procedures manifest that 
		\begin{align*}
			[Q_{h}(t)\mathcal{A}^{\dagger}(t) &+ \overline{Q}_{h}(t)\mathcal{A}(t)]R_{t,s} =R_{t,s}[Q_{h}(t)\mathcal{A}^{\dagger}(s) + \overline{Q}_{h}(t)\mathcal{A}(s)]\\
			&= R_{t,s}[Q_{h}(s)\mathcal{A}^{\dagger}(s) + \overline{Q}_{h}(s)\mathcal{A}(s)]\\
			&\quad + \{[Q_{h}(t)-Q_{h}(s)]\mathcal{A}^{\dagger}(s) + [\overline{Q}_{h}(t)-\overline{Q}_{h}(s)]\mathcal{A}(s)\},
		\end{align*}
		which implies
		\begin{align*} 
			-\iu\sqrt{\varepsilon}\eta\int_{0}^{t}R_{t,s}Uw(\eta,s)ds&=-\iu\sqrt{\frac{\varepsilon}{2}}\int_{0}^{t}R_{t,s}[Q_{h}(s)\mathcal{A}^{\dagger}(s) + \overline{Q}_{h}(s)\mathcal{A}(s)]Uw(\eta,s)ds\notag\\
			&\quad-\iu\sqrt{\frac{\varepsilon}{2}}\int_{0}^{t}R_{t,s}[Q_{h}(t)-Q_{h}(s)]\mathcal{A}^{\dagger}(s) Uw(\eta,s)ds\\
			&\quad-\iu\sqrt{\frac{\varepsilon}{2}}\int_{0}^{t} R_{t,s}[\overline{Q}_{h}(t)-\overline{Q}_{h}(s)]\mathcal{A}(s)Uw(\eta,s)ds.
		\end{align*}
		By the obvious relation $ Q_{h}(t) = Q_{h}(t-s)Q_{h}(s) $, we have 
		\begin{align*}
			&\Big\{[Q_{h}(t)-Q_{h}(s)]\mathcal{A}^{\dagger}(s) + [\overline{Q}_{h}(t)-\overline{Q}_{h}(s)]\mathcal{A}(s)\Big\}Uw(\eta,s)\\
			&= \Big\{[Q_{h}(t-s)-E_d]Q_{h}(s)\mathcal{A}^{\dagger}(s) + [\overline{Q}_{h}(t-s)-E_d]\overline{Q}_{h}(s)\mathcal{A}(s)\Big\}Uw(\eta,s)\\
			&=\Big\{[Q_{h,R}(t-s)-E_d]Q_{h}(s)\mathcal{A}^{\dagger}(s) + [Q_{h,R}(t-s)-E_d]\overline{Q}_{h}(s)\mathcal{A}(s)\Big\}Uw(\eta,s)\\
			&\quad + \Big\{\iu Q_{h,I}(t-s)Q_{h}(s)\mathcal{A}^{\dagger}(s) -\iu Q_{h,I}(t-s)\overline{Q}_{h}(s)\mathcal{A}(s)\Big\}Uw(\eta,s),
		\end{align*}
		where $ Q_{h}(t-s) = Q_{h,R} + \iu Q_{h,I} $. Both $ Q_{h,R} $ and $ Q_{h,I} $ are real matrices whose matrix 2-norm is less that $ 2 $.
		Again by the explicit formula for $ Q_h(s) $ and \eqref{eqn:pop}, we have 
		\begin{align*}
			&\Big\{\iu Q_{h,I}(t-s)Q_{h}(s)\mathcal{A}^{\dagger}(s) -\iu Q_{h,I}(t-s)\overline{Q}_{h}(s)\mathcal{A}(s)\Big\}Uw(\eta,s) \\
			& =\iu \sqrt{\frac{1}{2}}Q_{h,I}(t-s)\Big\{ \exp(2\iu \int_{0}^{s}BB^{T}(\tau)d\tau) \mathcal{A}^{\dagger}(s)-\exp(-2\iu \int_{0}^{s}BB^{T}(\tau)d\tau)\mathcal{A}(s)\Big\}Uw(\eta,s)\\
			&=\frac{1}{2}Q_{h,I}(t-s)\Big\{ P_{h}(s) \mathcal{A}^{\dagger}(s)+\overline{P}_{h}(s)\mathcal{A}(s)\Big\}Uw(\eta,s)\\
			&=-\frac{\iu}{2}Q_{h,I}(t-s) \nabla_{\eta}[Uw(\eta,s)].
		\end{align*}
		The arguments above give a similar estimate for $ \Vert \eta w\Vert_{\mathbb{L}^{2}_{\eta}} $.
		\begin{align}\label{eqn:estiEtaw}
			\Vert \eta w(\eta,t)\Vert_{\mathbb{L}^{2}_{\eta}} \le C_2\bigg\{ \Vert \eta w_0  \Vert_{\mathbb{L}^{2}_{\eta}} +\sqrt{\varepsilon}\int_{0}^{t}[\Vert w(\cdot,s) \Vert_{\mathbb{L}^{2}_{\eta}} + \Vert \eta w(\eta,s)  \Vert_{\mathbb{L}^{2}_{\eta}} + \Vert \nabla_{\eta} w(\cdot,s) \Vert_{\mathbb{L}^{2}_{\eta}}] ds\bigg\}
		\end{align}
		Combining \eqref{eqn:estiNablaw} and \eqref{eqn:estiEtaw} and the Gronwall's inequality, we conclude 
		\begin{align}
			\Vert \eta w\Vert_{\mathbb{L}^{2}_{\eta}}+ \Vert\nabla_{\eta} 
			w(\cdot,t) \Vert_{\mathbb{L}^{2}_{\eta}} \le C_{3}(T)\varepsilon^{-d/4},
		\end{align}
		which gives the desired estimate \eqref{eqn:err_H1norm}.
	
\end{document}